\documentclass[11pt, one side,emlines,reqno]{amsart}
\usepackage[a4paper,margin=1in]{geometry}
\usepackage{amssymb,latexsym,xy,eucal,mathrsfs,tikz}
\usetikzlibrary{patterns,snakes}


\textwidth=18cm \textheight=27cm \theoremstyle{plain}
\setlength{\textheight}{25cm} \setlength{\textwidth}{16cm}
\newtheorem{theorem}{Theorem}[section]
\newtheorem{thm}[theorem]{Theorem}
\newtheorem{lemma}[theorem]{Lemma}
\newtheorem{result}[theorem]{Main Theorem}

\newtheorem{corollary}[theorem]{Corollary}
\newtheorem{definition}[theorem]{Definition}

\newtheorem{Rm}[theorem]{Remark}
\newtheorem{Not}[theorem]{Notation}

\newtheorem{Con}[theorem]{Conjecture}
\numberwithin{equation}{section} \thispagestyle{empty}


\begin{document}
\baselineskip 18 truept
\title{Decompositions of $n$-Cube into $2^mn$-Cycles}
\date{}
\author{S. A. Tapadia, B. N. Waphare and Y. M. Borse }
\address{\rm Department of Mathematics, Savitribai Phule Pune University, Pune-411007, India.}
\email{\emph{tapadiasandhya@gmail.com,
waphare@yahoo.com, ymborse11@gmail.com}}
 
 \maketitle

 \noindent {\bf \small Abstract:} {\small It is known that the $n$-dimensional hypercube $Q_n,$ for $n$ even,  has a decomposition into $k$-cycles  for  $k=n, 2n,$ $2^l$ with $2 \leq l \leq n.$ In this paper, we prove that $Q_n$ has a decomposition into $2^mn$-cycles for $n \geq 2^m.$ As an immediate consequence of this result, we get path decompositions of $Q_n$ as well. This gives a partial solution to a conjecture posed by Ramras and also, it solves some special cases of a conjecture due to Erde.}
\vskip.2cm
\noindent {\bf Keywords:}  hypercube, decomposition, cycles, matching\\ ~ \\
\noindent {\bf 2010 MSC:} 05C70, 68R10

\section{Introduction}
 Let $Q_n$ denote the graph of the $n$-cube, so that $V(Q_n)$ is the set of $2^n$ binary $n$-tuples, and $E(Q_n)$ consists of those pairs of vertices which differ in exactly one co-ordinate. Therefore $Q_n$ is an $n$-regular graph with  $2^n$ vertices and $n2^{n-1}$ edges.

A \textit{decomposition} of a graph $G$ is a collection of edge-disjoint
subgraphs $H_1, H_2, \ldots, H_r$ of $G$ such that every edge of $G$ belongs
to exactly one $H_i.$ If all the subgraphs in the decomposition of $G$ are isomorphic to a graph $H$, we say that $G$ can be \textit{decomposed} into $H$ or $H$ \textit{decomposes} $G.$ By  \textit{$k$-cycle (respectively $k$-path)}, we mean the cycle (respectively path) with $k$ edges.

Clearly, to get a decomposition of $Q_n$ into cycles it is necessary that $n$ must be  even.  In what follows we assume that $n$ is an even integer.

\vskip.2cm
Due to various applications in parallel processing and interconnection networks, decompositions of the hypercube $Q_n$ into cycles have been studied extensively in the literature; see [1,2,4,6,7,9].  Alspach et al. \cite{Als} proved that $Q_n$ has a decomposition into hamiltonian cycles; whereas Song \cite{Son}  attempted to give an explicit construction of hamiltonian cycles in such a decomposition. Horak et al. \cite{Hor} proved that $Q_n$ can be decomposed into any graph $H$ of size $n,$ each of whose blocks is either an even cycle or an edge. 
\vskip.2cm
Following \cite{Ram}, a subset $F$ of $E(Q_n)$ is a \emph{fundamental set} for $Q_n$ with respect to a subgroup $\mathcal G$ of the automorphism group $Aut(Q_n)$ of $Q_n,$ if  $\{g(F)\colon g \in \mathcal G\}$ forms a decomposition of $Q_n.$

Ramras \cite {Mar} proved that the edge set of certain $n$-cycles of $Q_n$ are fundamental sets; while Mollard  and Ramras \cite{Mol} obtained a particular $2n$-cycle whose edge set forms a fundamental set for $Q_n.$ They further asked a question for which $k$ dividing $|E(Q_n)|=n2^{n-1},$ $Q_n$ has a decomposition into $k$-cycles or into $k$-paths. El-Zanati and Eynden \cite{Elz} proved that  $Q_n$ can be decomposed into $k$-cycles if $ k = 2^l$ with $2 \leq l \leq n.$

In this paper, we answer the question for $k=2^mn$ with $n \geq 2^m$ and $ m \geq 1.$  We obtain the following result.

\begin{theorem}
For $n \geq 2^m$  with $ m \geq 1,$ $n$ even, a $2^mn$-cycle decomposes the hypercube $Q_n.$ 
\end{theorem}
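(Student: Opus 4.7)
The natural plan is to induct on $m$, using the Mollard--Ramras theorem as the base case $m=1$. For the inductive step, suppose $\mathcal{D}_{m-1}$ is a decomposition of $Q_n$ into $2^{n-m}$ cycles of length $2^{m-1}n$; the goal is to pair these cycles up and merge each pair into a single simple cycle of length $2^{m}n$. Since two edge-disjoint cycles sharing only a vertex cannot be merged into a single simple cycle directly (the common vertex would have degree $4$ in the union), the merging step must include a local rerouting along an auxiliary $4$-cycle or short path of $Q_n$ that borrows edges from ``neighbouring'' cycles and returns them elsewhere.

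To control the pairing and the rerouting globally, I would realise $\mathcal{D}_{m-1}$ as the $\mathcal{G}_{m-1}$-orbit of a fundamental cycle $F_{m-1}$, where $\mathcal{G}_{m-1}\le\mathrm{Aut}(Q_n)$ is a subgroup of order $2^{n-m}$ (continuing the Mollard--Ramras fundamental-set philosophy). Then I would choose an index-$2$ subgroup $\mathcal{G}_m\le \mathcal{G}_{m-1}$ with coset representative $\sigma\in\mathcal{G}_{m-1}\setminus\mathcal{G}_m$, and build an explicit $2^{m}n$-cycle $F_m$ by splicing $F_{m-1}$ with $\sigma(F_{m-1})$ through a bridge consisting of $2^m$ coordinate flips in directions not already saturated by $F_{m-1}$. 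The proposed decomposition of $Q_n$ into $2^{m}n$-cycles is the $\mathcal{G}_m$-orbit of $F_m$. The hypothesis $n\ge 2^m$ is exactly what guarantees enough independent coordinate directions to construct such a bridge so that $F_m$ closes up without self-intersection.

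The hardest step will be verifying that $F_m$ is in fact a \emph{simple} cycle of length exactly $2^{m}n$ (rather than a closed walk revisiting a vertex, or a disjoint union of shorter cycles), and that its $\mathcal{G}_m$-translates are pairwise edge-disjoint and together cover every edge of $Q_n$. The former should reduce to a coordinate-pattern analysis of $F_m$, where $n\ge 2^m$ is used crucially to keep the splice directions distinct from the bulk of $F_{m-1}$; the latter should follow from a stabiliser computation for $F_m$ in $\mathcal{G}_m$ combined with the edge count $|F_m|\cdot |\mathcal{G}_m| = 2^{m}n\cdot 2^{n-m-1} = n\cdot 2^{n-1} = |E(Q_n)|$, which forces edge-disjointness to be equivalent to edge-covering.
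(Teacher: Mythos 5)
There is a genuine gap, and it sits exactly where you flagged ``the hardest step.'' Your plan is to merge two cycles $F_{m-1}$ and $\sigma(F_{m-1})$ of the previous decomposition into one simple $2^mn$-cycle $F_m$. A simple cycle of length $2^mn$ has $2^mn$ distinct vertices, so $F_m$ must visit the two old cycles vertex-disjointly and its edge set cannot be $E(F_{m-1})\cup E(\sigma(F_{m-1}))$: you must delete some old edges and add connecting edges. But every edge of $Q_n$ already lies in exactly one cycle of $\mathcal{D}_{m-1}$ (and note that for $\sigma=\sigma_A$ with $|A|$ even there is no edge of $Q_n$ between $v$ and $\sigma_A(v)$, so the bridge edges cannot run between ``corresponding'' vertices); hence every bridge edge is borrowed from a third cycle of the decomposition, which is thereby broken and must itself be repaired, and the repairs cascade. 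What is required is a globally consistent system of exchanged edges --- a set of marked edges, one batch per cycle, equidistributed so that after the simultaneous swap every resulting closed walk is a single simple cycle of length exactly $2^mn$ and every edge is still covered once. Your proposal names this difficulty but supplies no construction, no candidate for the marked edges, and no mechanism forcing the cascade to close up; the stabiliser/edge-count remark at the end only converts ``pairwise edge-disjoint'' into ``covering'' once one of the two is proved, which is the entire problem. As written, the argument is a programme, not a proof.

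It is worth seeing how the paper sidesteps precisely this obstruction, because the contrast is instructive. The paper does not induct on $m$ inside a fixed $Q_n$; it inducts on $n$, writing $Q_n=Q_{n-2}\,\Box\,Q_2$ in the induction step (Theorem 4.1). There the deleted edges are replaced by \emph{cross edges of the product}, i.e.\ genuinely new edges in the two new coordinate directions, which belong to no cycle of the inherited decompositions --- so nothing is ever borrowed from a sibling cycle and no cascade arises. The global consistency you would need is packaged into the strengthened induction hypothesis of Main Theorem 1.6: each cycle carries $2^m$ marked edges whose removal leaves equal-length paths (condition (I)), and the marked edges over all cycles form a perfect matching of the cube (condition (II)). The real work is then the basis $n=2^m$, handled not by splicing but by $Q_n=Q_{n/2}\,\Box\,Q_{n/2}$: Mollard--Ramras gives the $n$-cycle decomposition of $Q_{n/2}$, a bespoke subgroup $H$ of the translation group partitions those cycles into spanning $2$-regular layers (Lemmas 2.3--2.4), Kotzig's theorem turns each product of layer cycles into $n^2$-cycles, and Section 5 verifies the matching conditions by a long combinatorial analysis of $H$. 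If you want to salvage your route, you would essentially have to reinvent conditions (I)--(II) and prove an analogue of Section 5 for your bridge system; the hypothesis $n\geq 2^m$ alone does not produce it.
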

 Following result is an obvious corollary of the Theorem.
 \begin{corollary}
 Suppose $n \geq 2$ is even and $m$ is the largest integer such that $n\geq 2^m.$ Then $Q_n$ can be decomposed into $2^rn$-cycles, $r=1,2,\ldots,m.$
 \end{corollary}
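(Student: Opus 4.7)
My plan is to construct, for the given $n$ and $m$, a single $2^m n$-cycle $C$ in $Q_n$ whose edge-set is a \emph{fundamental set} in the sense of Ramras with respect to a carefully chosen subgroup $\mathcal G \le \mathrm{Aut}(Q_n)$; the desired decomposition is then $\{g(C) : g \in \mathcal G\}$. Since $|E(Q_n)| = n\,2^{n-1}$ and the cycle has $2^m n$ edges, I must have $|\mathcal G| = 2^{n-m-1}$, and the natural candidate is a subgroup of the translation group $\mathbb Z_2^n$: specifically $\mathcal G = \ker \phi$ for some surjective linear map $\phi\colon \mathbb Z_2^n \to \mathbb Z_2^{m+1}$ whose representing $(m{+}1)\times n$ matrix has no zero column. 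Since $n \ge 2^m \ge m+1$, such a $\phi$ exists.

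A short orbit-counting argument (the stabilizer of the edge $\{u,u\oplus e_i\}$ inside $\mathcal G$ is trivial when $\phi(e_i)\ne 0$, which is precisely our condition) then shows that the $\mathcal G$-orbits on $E(Q_n)$ split into exactly $2^m$ orbits per coordinate direction, for a total of $n\,2^m$ orbits, matching $|E(C)|$. Encoding a cycle in $Q_n$ as a sequence $(a_1,\dots,a_{2^m n})$ of coordinate indices whose partial sums in $\mathbb Z_2^n$ are all distinct and whose total sum is $0$, the fundamental-set condition translates to the combinatorial requirement that each coordinate $i\in\{1,\dots,n\}$ appears in the sequence exactly $2^m$ times, and the $\phi$-images of the corresponding partial sums run through all $2^m$ cosets of $\langle \phi(e_i)\rangle$ in $\mathbb Z_2^{m+1}$.

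I would then construct such a sequence by induction on $m$. The base case $m=1$ is provided by the Mollard--Ramras $2n$-cycle. To pass from $m-1$ to $m$, I would splice two suitably translated copies of the length-$2^{m-1} n$ fundamental cycle together along a short bridge routed through a fresh coset of $\mathcal G$. The inequality $n \ge 2^m$ is precisely what provides the slack needed: there is enough room in $\mathbb Z_2^n$ to accommodate a second translate of the previous cycle vertex-disjointly from the first, and enough coordinate directions to route the bridge so as to preserve the orbit-transversal property after doubling.

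The principal obstacle is the global simplicity of $C$: distinctness of partial sums within each half is inherited by induction, but distinctness across the two halves, together with compatibility of the bridging edges with the $\phi$-orbit structure, is the nontrivial requirement and is where the linear-algebraic freedom in choosing $\phi$ must be exploited. If the direct splice proves recalcitrant, a fallback is the Cartesian-product route: write $Q_n = Q_{n-2^m}\,\square\,Q_{2^m}$ and weave a Hamilton-cycle decomposition of $Q_{2^m}$ (provided by Alspach) with suitable shorter structures in $Q_{n-2^m}$ via the connecting matching edges, assembling cycles of total length $2^m n$. Either route reduces the theorem to the explicit combinatorial construction of one well-behaved cycle, with the bound $n \ge 2^m$ furnishing the room for the construction to fit.
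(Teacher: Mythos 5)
Your reduction is sound as far as it goes: taking $\mathcal G=\ker\phi$ for a surjective linear $\phi\colon\mathbb{Z}_2^n\to\mathbb{Z}_2^{m+1}$ with no zero column, the stabilizer computation is correct (the stabilizer of $\{u,u\oplus e_i\}$ is trivial precisely because $\phi(e_i)\neq 0$), giving $2^{n-1}/2^{n-m-1}=2^m$ orbits of edges in each direction and $n2^m$ in all, and your translation of the fundamental-set condition --- each direction used exactly $2^m$ times, with the $\phi$-images of the corresponding partial sums forming a transversal of the cosets of $\langle\phi(e_i)\rangle$ --- is also correct. But this reduction is the easy part; the entire content of the theorem lies in actually constructing such a simple cycle, and your proposal never does. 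The splice step fails concretely as described: merging two cycles requires deleting one edge from each and adding two cube edges as bridges, and in the natural ``translate by $e_{d'}$'' splice both deleted edges have the same direction $d$ while both bridges have direction $d'$, so the direction counts become $2^m-2$ and $2^m+2$ and the uniform-count requirement is destroyed (a balanced variant deleting edges of two \emph{different} directions does exist, but you identify neither the problem nor the fix). Beyond that, vertex-disjointness of $C$ and its translate, global simplicity after splicing, and the coset-transversal property with respect to the new, smaller kernel must all be verified simultaneously --- and you explicitly flag exactly this as unresolved. A plan whose central verification is acknowledged open is not a proof.

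The fallback route is also not viable as stated: when $n=2^m$ --- permitted by the hypothesis, and in fact the hardest case --- the factor $Q_{n-2^m}$ degenerates to a single vertex, and in general Alspach's Hamiltonian cycles of $Q_{2^m}$ have length $2^{2^m}\gg 2^mn$ with no mechanism offered for producing cycles of length $2^mn$ from them. For comparison, the paper derives this corollary instantly from Theorem 1.1 applied to each $r\le m$ (since $n\ge 2^m\ge 2^r$), and the real work is the basis step $n=2^m$ of the Main Theorem: writing $Q_n=Q_{n/2}\,\Box\,Q_{n/2}$, partitioning the Mollard--Ramras $n$-cycle decomposition of $Q_{n/2}$ into spanning $2$-regular layers via the cosets of the subgroup $H=\langle\{j,2^{i-1}+j\}\rangle$ (whose ``no consecutive string'' properties, Lemmas 2.3--2.7, do the delicate work you hope the ``linear-algebraic freedom in $\phi$'' will do), applying Kotzig's theorem, and propagating conditions (I)--(II) through an induction on $n$. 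It is also worth noting that your route, if completed, would prove strictly more than the paper: the paper's decomposition uses two distinct direction sequences $\mathcal S$ and $\theta(\mathcal S)$ and is never exhibited as a single orbit $\{g(C)\colon g\in\mathcal G\}$, i.e., as a fundamental set; the authors, plainly aware of that formulation via Ramras's conjecture, avoid claiming it --- a signal that the step you defer is a substantive open difficulty rather than a routine omission.
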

 
Since a $k$-cycle is decomposable into paths of
length $r,$ where $r < k$ and $r$ divides $k,$ we get the following consequence for path decompositions of $Q_n.$

\begin{corollary}
For $n \geq 2^m$ with $ m \geq 1,$ $n$ even, a path of length $r$ decomposes $Q_n$ for $ r = n, 2n, \ldots, 2^{m-1}n.$
\end{corollary}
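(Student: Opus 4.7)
The plan is to deduce the result directly from Theorem 1.1 combined with the elementary fact, already flagged in the paragraph preceding the corollary, that any $k$-cycle admits a decomposition into paths of length $r$ whenever $r \mid k$ and $r < k$. So the argument should be almost purely bookkeeping.

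First I would fix $r \in \{n, 2n, \ldots, 2^{m-1}n\}$ and write $r = 2^{j}n$ with $0 \le j \le m-1$. Theorem 1.1 gives a decomposition of $Q_n$ into cycles of length $k = 2^{m}n$. I would then check the two divisibility/size conditions needed to slice each such cycle into paths of length $r$: since $j \le m-1 < m$, we have $r = 2^{j}n < 2^{m}n = k$, and $k/r = 2^{m-j}$ is a positive integer, so $r \mid k$.

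Next I would observe that a cycle $C$ of length $k$ with $r \mid k$ and $r < k$ decomposes into exactly $k/r$ edge-disjoint paths of length $r$: traverse $C$ and cut it into consecutive arcs of length $r$; each arc is a path because $r < k$ prevents any arc from closing up, and the arcs partition $E(C)$. Applying this to each cycle of the decomposition given by Theorem 1.1 and taking the union of all the resulting paths yields a decomposition of $Q_n$ into paths of length $r$, completing the proof.

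There is no real obstacle here; the only thing one has to be careful about is the strict inequality $r < k$, which is exactly why the corollary stops at $r = 2^{m-1}n$ rather than going up to $r = 2^{m}n$ (the latter would give a single cycle per component of the cycle decomposition, not a path). No additional parity or divisibility check against $|E(Q_n)| = n\,2^{n-1}$ is needed, since the cycle decomposition already respects the edge count.
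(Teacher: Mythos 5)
Your proposal is correct and follows exactly the route the paper intends: the paper derives this corollary in one line from Theorem 1.1 together with the observation that a $k$-cycle decomposes into paths of length $r$ whenever $r$ divides $k$ and $r<k$, which is precisely your argument with $k=2^mn$ and $r=2^jn$, $0\leq j\leq m-1$. Your verification of the divisibility, the strict inequality $r<k$, and the cutting of each cycle into $k/r$ consecutive arcs supplies the details the paper leaves implicit, and nothing further is needed.
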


\vskip.2cm \indent This solves the cases $r = 2n,$ $ r = 4n,$ \ldots, $r = 2^{m-1}n$  of the following conjecture due to Erde \cite{Erd}, for $n \geq 2^m.$
\begin{Con}
 For $n$ even, $ r $ dividing $n2^{n-1}$ and $ r < 2^n,$ a path of length $r$ decomposes $Q_n.$
\end{Con}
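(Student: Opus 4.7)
The statement follows immediately from the main Theorem together with the standard fact, recalled in the paragraph just before the corollary, that a $k$-cycle decomposes into paths of length $r$ whenever $r$ divides $k$ and $r<k$. My plan is simply to apply the Theorem to obtain a cycle decomposition of $Q_n$, and then chop each resulting cycle into the desired paths.

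Concretely, fix any $r\in\{n,2n,\ldots,2^{m-1}n\}$ and write $r=2^{j}n$ with $0\le j\le m-1$. Because $1\le j+1\le m$ and the hypothesis $n\ge 2^{m}$ gives $n\ge 2^{j+1}$, the main Theorem applies with the exponent $j+1$ in place of $m$, producing a decomposition of $E(Q_n)$ into cycles of length $2^{j+1}n=2r$. (Equivalently, one may apply the Theorem once with $m$ to obtain $2^{m}n$-cycles and then cut each into $2^{m-j}$ consecutive $r$-arcs; the argument is the same.)

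Each $2r$-cycle $C$ in this decomposition splits into two edge-disjoint $r$-paths by choosing any vertex $v$ of $C$ together with the vertex $v'$ antipodal to $v$ on $C$, and taking the two $v$--$v'$ arcs. Forming the union of the two $r$-paths obtained from every cycle in the decomposition yields a partition of $E(Q_n)$ into paths of length $r$, which is the required decomposition. An edge count is consistent: $|E(Q_n)|=n2^{n-1}$ is divisible by $r=2^{j}n$ since $j\le m-1\le n-1$.

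There is essentially no obstacle here, and that is the point: the cycle-decomposition Theorem is doing all of the work, and the passage from cycles to paths is routine. The only bookkeeping check is that the hypothesis $n\ge 2^{m}$ of the corollary propagates to the hypothesis $n\ge 2^{j+1}$ needed to invoke the Theorem for each admissible $r$, which is immediate from $j+1\le m$; and that the path length $r$ divides the cycle length $2^{j+1}n$, which is trivially true because $2^{j+1}n=2r$.
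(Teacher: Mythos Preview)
The statement you were asked to prove is Conjecture~1.4 of Erde, and it is \emph{not} proved in the paper; the paper explicitly presents it as an open conjecture and only claims to settle the special cases $r=2n,4n,\ldots,2^{m-1}n$ when $n\ge 2^m$. Your argument does not prove the conjecture either: you immediately restrict to $r\in\{n,2n,\ldots,2^{m-1}n\}$, so what you have written is a proof of Corollary~1.3, not of the conjecture. The conjecture demands a path decomposition for \emph{every} divisor $r$ of $n2^{n-1}$ with $r<2^n$, which includes many values not of the form $2^jn$ (for instance $r=3$ when $n=6$, or $r=2^{n-1}$ for any even $n\ge 4$); nothing in your argument touches these.

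For the special cases you do address, your reasoning is fine and matches exactly how the paper derives Corollary~1.3: apply Theorem~1.1 to get a decomposition of $Q_n$ into $2^{j+1}n$-cycles (using $n\ge 2^m\ge 2^{j+1}$), then cut each cycle into paths of length $r=2^jn$. But this is the content of Corollary~1.3, not the conjecture. The gap is not technical but scope: you have proved a strictly weaker statement than the one asked, and the full conjecture remains open.
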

 Also, the case $r=2n$ of Corollary 1.2 gives decomposition of $Q_n$ into paths of length $2n.$ So, the result solves partially the following conjecture by Ramras\cite{Ram}. 
\begin{Con}
For an even $n \geq 4,$ the edge set of a path of length $2n$ is a fundamental set for $Q_n$.
\end{Con}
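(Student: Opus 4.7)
The goal is to produce a path $P$ of length $2n$ in $Q_n$ and a subgroup $\mathcal{G} \leq \mathrm{Aut}(Q_n)$ of order $|E(Q_n)|/|E(P)| = 2^{n-2}$ such that the orbit $\{g(E(P)) : g \in \mathcal{G}\}$ partitions the edges of $Q_n$. A natural first attempt is to seek $\mathcal{G}$ inside the translation subgroup $\mathbb{Z}_2^n \leq \mathrm{Aut}(Q_n)$, so that $\mathcal{G}$ is identified with an $(n-2)$-dimensional $\mathbb{F}_2$-subspace $W \subseteq \mathbb{Z}_2^n$ acting by XOR.

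A short direction-counting argument shows this ansatz can work only if no standard basis vector $e_i$ lies in $W$, in which case each coordinate-direction edge class of $Q_n$ splits into exactly two $W$-orbits, forcing $P$ to contain precisely two edges in each of the $n$ coordinate directions. I would then parameterise $P$ by its direction sequence $d_1, d_2, \ldots, d_{2n}$ (with each $d_j \in \{e_1, \ldots, e_n\}$) and impose three conditions: each $e_i$ appears exactly twice in the sequence; the prefix sums $v_j = d_1 + \cdots + d_j$ are pairwise distinct, so $P$ is a simple path; and the edges $\{v_{j-1}, v_j\}$ lie in pairwise distinct $W$-orbits. For small $n$ (say $n = 4, 6$) I would resolve these conditions by hand or by direct search, and then attempt an inductive extension to general even $n$ via the product decomposition $Q_{n+2} = Q_n \Box Q_2$, building a base pair on $Q_{n+2}$ from $(P, W)$ on $Q_n$ together with a compatible object on $Q_2$.

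The main obstacle will be the tension between the combinatorial distinctness of the $v_j$'s and the linear-algebraic condition on the $W$-orbits; simultaneously enforcing both as $n$ grows appears delicate, and the inductive step is unlikely to preserve the free action automatically. If pure translation subgroups prove insufficient, one must allow $\mathcal{G}$ to contain coordinate permutations from $S_n$, but verifying the freeness of a non-abelian $\mathcal{G}$-action becomes substantially more involved since translations and permutations do not commute. An alternative, perhaps cleaner route is to exploit Theorem 1.1 with $m=2$: each $4n$-cycle in that decomposition splits into two $2n$-paths, and if one can locate an involution $\sigma \in \mathrm{Aut}(Q_n)$ that interchanges the two halves of every cycle in a fundamental-set orbit $\mathcal{H}\cdot C_0$ for the $4n$-cycle decomposition, then $\langle \mathcal{H}, \sigma \rangle$ would yield the desired fundamental set for $2n$-paths, reducing the conjecture to finding such a $\sigma$.
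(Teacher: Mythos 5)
You have not proved the statement, and neither does the paper: this is Ramras's conjecture, which the paper explicitly leaves open. What the paper establishes (Corollary 1.3, case $r=2n$, via Theorem 1.1) is only that $Q_n$ \emph{decomposes} into paths of length $2n$; the fundamental-set property demands strictly more, namely that the decomposition be realizable as the orbit $\{g(F)\colon g\in\mathcal G\}$ of a single subgroup $\mathcal G\leq \mathrm{Aut}(Q_n)$ acting on the edge set of one fixed path, and it is precisely this symmetry requirement that remains unresolved. Your proposal is honest about being a program rather than a proof, but as a proof attempt it has a genuine gap in each branch: the translation-subgroup route is carried out only as far as a (correct) necessary counting condition — $e_i\notin W$ for all $i$, forcing exactly two edges of $P$ in each direction — plus a small-case search and an induction that you yourself concede is "unlikely to preserve the free action automatically"; nothing is actually constructed for general even $n$.

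The sharpest concrete flaw is in your final reduction. It presupposes that the $4n$-cycle decomposition of Theorem 1.1 with $m=2$ is itself a fundamental set, i.e., an orbit $\mathcal H\cdot C_0$ of some subgroup $\mathcal H$. The paper never establishes this, and its construction makes it non-obvious: the $2^mn$-cycles come in two families $\Phi^i_{AB}$ and $\Gamma^i_{AB}$ with \emph{different} edge-direction sequences $\mathcal S$ and $\theta(\mathcal S)$, indexed over pairs $(A,B)\in H_i\times H_i$ ranging over cosets $H_i$ of the subgroup $H$, and it is presented as an explicit edge-disjoint collection, not as the orbit of one cycle under a group (one would at least need to adjoin the coordinate permutation $\theta$ and verify transitivity and edge-disjointness of the resulting orbit, which is not done anywhere). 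So your reduction replaces one open statement by two: the existence of such an $\mathcal H$, and the existence of the half-swapping involution $\sigma$ compatible with it. Until at least one of your three routes is completed — and the relevant obstructions (simultaneous distinctness of the prefix sums $v_j$ and the $W$-orbit condition, or freeness of a non-abelian $\langle\mathcal H,\sigma\rangle$-action) are actually overcome — the conjecture stands exactly where the paper leaves it: partially solved, in the weak sense that the required decomposition exists.
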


We actually prove the following stronger result than Theorem 1.1. 
\begin{result}
Let $n \geq 2^m$ be an even integer with $ m\geq 1.$  Then the hypercube $Q_n$ can be decomposed into $2^mn$-cycles, (say) $C_1, C_2, \ldots, C_r,$ where $r = 2^{n-1-m}$ such that
\begin{itemize}
\item [(I)] every $C_i$ contains $2^m$ edges $e_{i1}, e_{i2}, \ldots, e_{i2^m}$ such that $C_i - \{e_{i1}, e_{i2}, \ldots, e_{i2^m}\}$ has $2^m$ components each of which is a path of length $n-1$;
\item[(II)] $M=\{e_{ij}\colon j = 1, 2, \ldots, 2^m; i = 1, 2, \ldots, r\}$ forms a perfect matching in $Q_n.$
\end{itemize}
\end{result}

We prove this theorem by induction on $n.$ The proof of the induction step is straight-forward. The major part of this paper is devoted to proving the basis step $n=2^m$. For the proof of the basis step, we create a lot of algebraic machinery. As $Q_n = Q_{n/2} \Box Q_{n/2}$, for $n=2^m$ we first find a decomposition of $Q_{n/2}$ into $n$-cycles as given by Mollard and Ramras in \cite{Mol}. We need to partition this collection of $n$-cycles into sub-collections consisting of vertex-disjoint cycles such that union of cycles in each sub-collection forms a spanning subgraph of $Q_{n/2}.$ For this purpose, we construct a particular subgroup of $Aut(Q_n)$ in Section 2. Then, by taking the Cartesian product of these $n$-cycles in the decomposition of $Q_{n/2}$ in a particular manner, we construct the $2^mn$-cycles in $Q_n$. This construction is given in Section 3. We  give a collection of $n$ edges from every $2^mn$-cycle in the decomposition of $Q_n$ satisfying condition (I). The proof  of the fact  that these edges also satisfy condition (II), is given in Section 5.  In Section 4, we give the proof of the induction step.
We illustrate construction of $64$-cycles in the decomposition of $Q_8$ and selection of edges from these cycles to form a perfect matching of $Q_8$ in the Appendix provided at the end.  

\section{Construction of the subgroup }

 For a positive integer $n,$ we use the notation $[n]$ for the set $\{1,2,\ldots,n\}$ and consider $V(Q_n),$ the vertex set of the hypercube $Q_n,$ as the power set $ \mathcal{P}([n])$. Two vertices $A$ and $B$ of $Q_n$ are adjacent if and only if $|A \Delta B| = 1,$ where $\Delta$ denotes the symmetric difference of the sets $A$ and $B.$  The  \emph{direction} of an edge $e = (A,B)$  of $Q_n$ is $i$ if $A\Delta B =\{ i\}.$ A cycle $C=U_1-U_2-U_3-\ldots- U_k - U_1$ in $Q_n$ can be expressed in terms of a vertex $U_1$ of the cycle and edge-direction sequence $S=(i_1, i_2, \ldots, i_k)$ in the sense that the cycle $C=C(U,S),$ where $U=U_1,$ 
 $U_j=U_1\Delta\{i_1\},\Delta\{i_2\}\Delta\ldots\Delta\{i_{j-1}\}$ for $j=2,3,\ldots,k,$ and $U_k\Delta\{i_k\}=U_1.$ For example, the cycle $ C = \emptyset-\{1\}-\{1,2\}- \{2\}-\emptyset$ in $Q_4$ can be written as $C(\emptyset, S)$  with $ S = (1,2,1,2).$ 
 
 For $A \subset [n],$ the map $\sigma_A: \mathcal{P}([n]) \rightarrow \mathcal{P}([n])$ defined by $\sigma_A(B)=A\Delta B$ is an automorphism of $Q_n.$  For a  subgraph $W$ of $Q_n,$ let  $\sigma_A(W)$ denote  the  image of $W$ under the map $\sigma_A.$


We state an obvious lemma regarding the action of an automorphism $\sigma_A$ on a cycle in $Q_n.$
\begin{lemma}
If $A \subseteq [n],$ and  $C = C(U,S)$ is a cycle in $Q_n,$ then $\sigma_A(C)$ is the cycle $ C(A\Delta U, S)$ in $Q_n.$
\end{lemma}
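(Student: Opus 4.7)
The plan is to unfold both sides of the claimed equality directly from the definitions in Section 2 and verify that the vertex sequences agree edge by edge; there is no serious obstacle, since $\sigma_A$ is already known to be an automorphism and the notation $C(U,S)$ is just a bookkeeping device for the vertex sequence obtained by successively toggling the coordinates listed in $S$.

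Concretely, I would write $C = C(U,S)$ with $S=(i_1,i_2,\ldots,i_k)$ and list its vertices as $U_1=U$ and $U_j = U \,\Delta\, \{i_1\}\,\Delta\,\{i_2\}\,\Delta\,\cdots\,\Delta\,\{i_{j-1}\}$ for $j = 2, 3, \ldots, k$, with the closing condition $U_k \,\Delta\, \{i_k\} = U_1$. Then I would apply $\sigma_A$ to each $U_j$ and use associativity/commutativity of symmetric difference to compute
\[
\sigma_A(U_j) = A \,\Delta\, U_j = (A\,\Delta\, U)\,\Delta\,\{i_1\}\,\Delta\,\{i_2\}\,\Delta\,\cdots\,\Delta\,\{i_{j-1}\}.
\]
In particular $\sigma_A(U_1) = A\,\Delta\, U$, and the sequence of images is exactly the vertex sequence prescribed by the cycle $C(A\,\Delta\, U,\,S)$.

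To finish, I would verify that $\sigma_A$ sends each edge of $C$ to the edge of $C(A\,\Delta\, U,S)$ with the same direction: since $\sigma_A$ is an automorphism of $Q_n$ and, by the computation above, $\sigma_A(U_j)\,\Delta\,\sigma_A(U_{j+1}) = U_j\,\Delta\, U_{j+1} = \{i_j\}$, each edge $(U_j,U_{j+1})$ of $C$ is mapped to an edge of direction $i_j$ at the corresponding position of $C(A\,\Delta\, U,S)$. The closing edge is handled the same way using $U_k\,\Delta\,\{i_k\}=U_1$. Therefore $\sigma_A(C) = C(A\,\Delta\, U,\,S)$, as required. The whole argument is a one-line chain of symmetric-difference identities, so the main (mild) obstacle is simply being careful that the indexing of the edge-direction sequence $S$ matches on both sides.
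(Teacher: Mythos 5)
Your proof is correct: the symmetric-difference computation $\sigma_A(U_j) = (A\,\Delta\,U)\,\Delta\,\{i_1\}\,\Delta\cdots\Delta\,\{i_{j-1}\}$, together with the observation that $\sigma_A(U_j)\,\Delta\,\sigma_A(U_{j+1}) = \{i_j\}$ preserves edge directions, is exactly the intended argument. The paper states this lemma without proof, calling it obvious, so your write-up simply fills in the routine details in the expected way.
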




Let $ G=\{A \subseteq [n-1]\colon |A|~even\}$ and let $\mathcal G=\{\sigma_A\colon A \in G\}.$
Then $G$ is a subgroup of the group $(\mathcal {P}([n]);\Delta)$ and $\mathcal G$ is a subgroup of Aut$(Q_n).$ Let
$C=C(\emptyset,S)$ be the  $2n$-cycle in $Q_n$ with the initial vertex $\emptyset$ and the edge direction sequence $S=(1,2,\ldots,n,1,2,\ldots,n).$ Then by Lemma 2.1, the collection $\mathcal C =\{\sigma_A(C)\colon \sigma_A \in \mathcal G\}=\{\sigma_A(C)\colon A \in G\} = \{C(A, S) \colon A \in G\}$ is a collection of $2n$-cycles.

\begin{thm} [\cite{Mol}]
	The members of the above collection $\mathcal C$ decompose the hypercube $Q_n.$ 
	\end{thm}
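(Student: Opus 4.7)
The strategy is a counting argument combined with a direct case analysis of edge-disjointness. Since $|G|=2^{n-2}$ (even-sized subsets of an $(n-1)$-set) and each member of $\mathcal{C}$ has $2n$ edges, the total edge count is $2^{n-2}\cdot 2n = n\,2^{n-1}=|E(Q_n)|$; therefore it suffices to prove that distinct members of $\mathcal{C}$ share no edge. I would first verify that $C=C(\emptyset,S)$ really is a $2n$-cycle by listing its vertices: the first $n+1$ are $\emptyset,\{1\},\{1,2\},\ldots,[n]$, and the remaining $n-1$ are $\{2,\ldots,n\},\{3,\ldots,n\},\ldots,\{n\}$, all $2n$ of them distinct. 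By Lemma 2.1 every $C(A,S)=\sigma_A(C)$ is then a $2n$-cycle as well.

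Next I would isolate the two direction-$k$ edges of $C(A,S)$: since $k$ appears exactly at positions $k$ and $n+k$ of $S$, these edges have endpoints $A\Delta\{1,\ldots,k-1\},\,A\Delta\{1,\ldots,k\}$ and $A\Delta\{k+1,\ldots,n\},\,A\Delta\{k,\ldots,n\}$. A direction-$k$ edge of $Q_n$ is determined by either endpoint read in $[n]\setminus\{k\}$; writing
\[
u(A,k):=(A\Delta\{1,\ldots,k-1\})\setminus\{k\},\qquad v(A,k):=(A\Delta\{k+1,\ldots,n\})\setminus\{k\},
\]
a quick computation gives $v(A,k)=u(A,k)\,\Delta\,([n]\setminus\{k\})$.

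I would then suppose $C(A,S)$ and $C(A',S)$ share a direction-$k$ edge. One of four equalities among $\{u(A,k),v(A,k)\}$ and $\{u(A',k),v(A',k)\}$ must hold, and each reduces to one of two situations. Either $A\Delta A'\subseteq\{k\}$, in which case equality fails only when $A\Delta A'=\{k\}$; this contradicts the evenness of $|A|,|A'|$ when $k\in[n-1]$, and contradicts $A,A'\subseteq[n-1]$ when $k=n$. Or $A\Delta A'\supseteq[n]\setminus\{k\}$; for $k\in[n-1]$ this forces $n\in A\Delta A'$, impossible since $A,A'\subseteq[n-1]$, and for $k=n$ it forces $A\Delta A'=[n-1]$, whose odd cardinality contradicts the parity constraint on $G$. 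In every branch $A=A'$, so the cycles are pairwise edge-disjoint and hence decompose $Q_n$.

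The main obstacle is simply bookkeeping: making visible that the two defining features of $G$ — even cardinality and being a subset of $[n-1]$ rather than $[n]$ — are individually necessary, each ruling out one of the two potential edge collisions that parity alone or support alone would fail to prevent. Once the encoding $(u,v)$ above is set up, the argument is a short, symmetric case check.
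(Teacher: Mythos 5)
Your proof is correct, and it fills a gap the paper deliberately leaves open: Theorem 2.2 is imported from Mollard and Ramras \cite{Mol} with no proof given here, the collection $\mathcal{C}$ being presented as the orbit of the $2n$-cycle $C$ under the subgroup $\mathcal{G}=\{\sigma_A\colon A\in G\}$ of $\mathrm{Aut}(Q_n)$ (the ``fundamental set'' framework). Your argument is a sound, self-contained substitute. The counting step is right ($2^{n-2}\cdot 2n=n2^{n-1}=|E(Q_n)|$, so pairwise edge-disjointness suffices, and edge-disjointness for $A\neq A'$ also guarantees the $2^{n-2}$ cycles are distinct, which the count tacitly needs); each cycle indeed has exactly two direction-$k$ edges, at positions $k$ and $n+k$, with the endpoint formulas you state; the encoding of a direction-$k$ edge by an endpoint with $k$ deleted is faithful; and the identity $v(A,k)=u(A,k)\,\Delta\,([n]\setminus\{k\})$ reduces all four coincidence cases to $A\Delta A'\subseteq\{k\}$ or $A\Delta A'\supseteq[n]\setminus\{k\}$, which are killed respectively by parity ($|A|,|A'|$ even, and $|[n-1]|=n-1$ odd since $n$ is even) and by support ($A,A'\subseteq[n-1]$). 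It is worth observing that your case analysis is the edge-level analogue of the paper's own proof of Lemma 2.4, where a shared \emph{vertex} of $C(A,S)$ and $C(B,S)$ forces $A\Delta B$ to be a consecutive string $\{p+1,\ldots,q\}$ or to contain $n$: the same two structural features of $G$ do the work in both places. What the orbit formulation buys the paper is seamless compatibility with the coset machinery built around $H$ in Section 2; what your version buys is independence from \cite{Mol}, at the modest cost of the bookkeeping you acknowledge.
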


Suppose $n$ is a power of $2.$ We will partition the collection $\mathcal C$ into $n/2$ sub-collections (say), $\mathcal C_1,\mathcal C_2,\ldots,\mathcal C_{n/2},$  in such a way that the $2n$-cycles belonging to each sub-collection are mutually vertex-disjoint and the union of these cycles in each $\mathcal C_i$ forms a spanning subgraph of $Q_n.$ To get this partition, we construct a particular subgroup $H$ of $G.$ The cosets of $H$ partition $G,$ which in turn gives the desired partition of $\mathcal C.$

\textbf{Construction of the subgroup $\bf H:$} Let $n=2^m,$ with $m \geq 2$  and $G=\{A \subseteq [n-1]\colon |A|~ even\}.$ Then $(G; \Delta)$ is a group with $2^{n-2}$ elements. Let $H$ be the subgroup of $(G;\Delta)$ which is generated by the collection of $2$-element subsets given by
 $$\mathsf K =\{\{j,2^{i-1}+j\}\colon j=1,2, \ldots, 2^{i-1}-1,~i=2,3,\ldots,m\}.$$
  For convenience, we write $K_1=\emptyset$ and for $i=2,3,\ldots,m,$ 
	$$K_i = \{\{j,2^{i-1}+j\}\colon j=1,2,\ldots,2^{i-1}-1\}.$$ Note that $K_r \cap K_s = \emptyset$ whenever $r \neq s.$
	


In particular, for $n=2^3=8,$ $K_2=\{\{1,3\}\},$  $K_3=\{\{1,5\},\{2,6\},\{3,7\}\}$ and so $K=\{\{1,3\},\{1,5\},\{2,6\},\{3,7\}\}.$ Therefore  $H=<\{1,3\},\{1,5\},\{2,6\},\{3,7\}>=\{\emptyset,\{1,3\},\{1,5\},\linebreak\{1,7\}, \{2,6\},\{3,5\}, \{3,7\},  \{5,7\}, \{1,2,3,6\}, \{1,2,5,6\},\{1,2,6,7\}, \{1,3,5,7\},\{2,3,5,6\},\linebreak\{2,3,6,7\}, \{2,5,6,7\}, \{1,2,3,5,6,7\}\}.$ So $|H|=16.$ 
\vskip .5cm
We prove the following lemma to get cosets of $H$ in $G.$ We call a set of consecutive integers a \textit{consecutive string}.
\begin{lemma}
Let $n=2^m$ with $ m\geq 2 $ and let $G, H, K_i,~ 2\leq i \leq m$  be as above. Then 
\begin{enumerate}
 \item No element in $H$ forms a consecutive string in $[n]$  and $|H| = 2^{n-m-1}.$
 \item $ H_1 = H,$ $H_2=\{1,2\} \Delta H,$ $H_3=\{1,2,3,4\}\Delta H,$ \ldots, $H_{n/2}=\{1,2,\ldots,n-2\}\Delta H$ are precisely the cosets of $H$ in $G.$ 
 \end{enumerate}
\end{lemma}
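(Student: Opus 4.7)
The plan is to prove both parts by induction on $m \geq 2$. For the base case $m = 2$ a direct check suffices: $H = \{\emptyset, \{1,3\}\}$, so $|H| = 2 = 2^{n-m-1}$, the only nonempty element $\{1,3\}$ is not a consecutive string, and $H_1 = H$, $H_2 = \{\{1,2\},\{2,3\}\}$ exhaust the four elements of $G$. For the inductive step, let $H^{(m-1)}$ denote the subgroup of $G^{(m-1)} = \{A \subseteq [2^{m-1}-1]\colon |A|~\text{even}\}$ produced by the same construction at level $m-1$; since $\mathsf K^{(m-1)} = K_2 \cup \cdots \cup K_{m-1}$ is contained in $\mathsf K$, we have $H^{(m-1)} \subseteq H$.

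The first step is to set up a bijective structure on $H$. Every element is a symmetric difference of generators; separating the contributions from $\mathsf K^{(m-1)}$ and from $K_m$, and letting $T \subseteq [1, 2^{m-1}-1]$ be the set of indices $j$ for which $\{j, 2^{m-1}+j\}$ is used, one obtains the representation
\[
A = (A' \oplus T) \cup (2^{m-1} + T), \qquad A' \in H^{(m-1)}, \quad T \subseteq [1, 2^{m-1}-1],
\]
as a disjoint union of a lower half in $[1, 2^{m-1}-1]$ and an upper half in $[2^{m-1}+1, 2^m - 1]$. The pair $(A', T)$ is recovered from $A$ (the upper half determines $T$, and then $A' = T \oplus (A \cap [1, 2^{m-1}-1])$), so the correspondence is a bijection. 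Combined with the inductive hypothesis, this gives $|H| = |H^{(m-1)}| \cdot 2^{2^{m-1}-1} = 2^{n-m-1}$.

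The crux of the consecutive-string claim is the observation that the single coordinate $2^{m-1}$ appears in no generator of $\mathsf K$ (it is too large to be a smaller element $j \in [1, 2^{i-1}-1]$ for any $i \leq m$, and the equation $2^{m-1} = 2^{i-1} + j$ has no solution with $1 \le j \le 2^{i-1}-1$ and $i \le m$), so $2^{m-1} \notin A$ for every $A \in H$. Thus a nonempty consecutive string $A = \{a, a+1, \ldots, b\} \in H$ must satisfy $b < 2^{m-1}$ or $a > 2^{m-1}$. In the first case the upper half of $A$ is empty, forcing $T = \emptyset$ and $A = A' \in H^{(m-1)}$ to be a nonempty consecutive string in $[2^{m-1}-1]$, contradicting the inductive hypothesis. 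In the second case the lower half of $A$ is empty, forcing $A' = T$, so $T = A - 2^{m-1}$ is a nonempty consecutive string in $[2^{m-1}-1]$ lying in $H^{(m-1)}$, again a contradiction.

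Part (2) then follows quickly: each representative $\{1,2,\ldots,2(k-1)\}$ has even cardinality, so lies in $G$ and $H_k$ is a coset of $H$; for $1 \le k < l \le n/2$ the symmetric difference of two representatives is the nonempty consecutive string $\{2k-1, 2k, \ldots, 2l-2\}$, which by part (1) is not in $H$, so $H_k \neq H_l$. Since $|G|/|H| = 2^{n-2}/2^{n-m-1} = 2^{m-1} = n/2$, the distinct cosets $H_1, \ldots, H_{n/2}$ are all the cosets of $H$ in $G$. The main obstacle is the consecutive-string exclusion in part (1); once the absence of $2^{m-1}$ from $H$ is noted, the induction splits cleanly on either side of $2^{m-1}$ and the remaining work is bookkeeping.
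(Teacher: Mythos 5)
Your proof is correct and follows essentially the same route as the paper: induction on $m$, splitting each element of $H$ into its $K_m$-part $T\cup(2^{m-1}+T)$ and a residual element of the level-$(m-1)$ subgroup, using the absence of $2^{m-1}$ from every element of $H$ to rule out consecutive strings straddling the midpoint, and arguing part (2) identically via the representatives $\{1,2,\dots,2k-2\}$ together with the index count $|G|/|H|=n/2$. If anything, your explicit bijection $(A',T)\mapsto(A'\Delta T)\cup(2^{m-1}+T)$ establishes $|H|=2^{n-m-1}$ more rigorously than the paper, which justifies the order by calling its $n-m-1$ generators pairwise disjoint — they are not (e.g.\ $\{1,3\}$ and $\{1,5\}$ for $m=3$) — rather than verifying their independence over $\mathbb{F}_2$ as your recovery of $(A',T)$ from $A$ effectively does.
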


\begin{proof}

(1). We prove the result by using induction on $m.$ If $m=2,$ then $G = \{\emptyset, \{1,2\},\{1,3\},\{2,3\}\}$ and $H = \{\emptyset, \{1,3\}\}.$ Hence the result is true for $ m = 2.$ Suppose $m > 2.$ Assume that the result is true for $m-1.$ 
We prove the result for $m.$ Let $A \in H.$ Then $A = A_1\Delta A_2 \Delta \ldots \Delta A_r,$ for some $A_1, A_2, \ldots, A_r \in \mathsf K.$  If  each  $A_i \subset [2^{m-1}  -1], $ then, by induction, $A$ is not a consecutive string.  Suppose $A_i \in K_m$ for some $i.$ We may assume that $A_i \in K_m$  for $1\leq i\leq l$ and $ A_j \notin K_m$  for $ l+1 \leq j \leq r.$   Then  $A_i = \{x_i,2^{m-1}+x_i\}$ for some $x_i \in [2^{m-1}-1].$ Since the members of $K_m$ are mutually disjoint, $A_1\Delta A_2 \Delta\ldots\Delta A_l = \displaystyle \bigcup_{i=1}^{l} A_i = \displaystyle \bigcup_{i=1}^{l} \{x_i,2^{m-1}+x_i\}=\{x_1,x_2,\ldots,x_l\}\cup\{x_1+2^{m-1},x_2+2^{m-1},\ldots,x_l+2^{m-1}\}.$ Let  $B = A_{l+1}\Delta A_{l+2} \Delta \ldots \Delta A_{r}.$  By induction, $B$ is not a consecutive string. As  $ B \subset [2^{m-1} - 1],$ $ 2^{m-1}+x_i \notin B$ for $1\leq i \leq l.$ Therefore  $A = A_1\Delta A_2 \Delta\ldots\Delta A_l\Delta B= (\{x_1,x_2,\ldots,x_l\}  B)  \cup \{2^{m-1}+x_1,2^{m-1}+x_2,\ldots,2^{m-1}+x_l\}.$
If  $\{x_1,x_2,\ldots,x_l\}\Delta B = \emptyset,$  then $B = \{x_1,x_2,\ldots,x_l\}$ and hence  $A = \{2^{m-1}+x_1,2^{m-1}+x_2,\ldots,2^{m-1}+x_l\}$ is not a consecutive string as $B$ is not a consecutive string. If  $y \in \{x_1,x_2,\ldots,x_l\}\Delta B,$ then $y < 2^{m-1} <  2^{m-1} + x_i $ for $ 1\leq i \leq l.$ Hence $A$ is not a consecutive string.

 Since  $H$ is a finite abelian group generated by ${n-m-1}$ pairwise disjoint sets each of which has cardinality  two, $|H| = 2^{n-m-1}.$

(2). As $ G = \{ A\colon A \subset [n-1], |A|~even\}$ and by (2) no member of the subgroup $H$ is a consecutive string, $ \{1, 2, \ldots, 2i-2\} \in G - H$ for $ i = 2, \ldots, n/2.$ Hence  $ H_1 = H,$ $H_2=\{1,2\}\Delta H,$ $H_3=\{1,2,3,4\}\Delta  H,$ \ldots, $H_{n/2}=\{1,2,\ldots, n-3,n-2\}\Delta H$ are cosets of $H$ in $G.$ If  $i < j,$ then $H_i = H_j$ implies that $\{2i-1,2i,\ldots,2j-2\} \in H,$ which is a contradiction by (2). Hence  all these $n/2$  cosets are distinct. Further, by (2), $|H|\times n/2  =  2^{n-m-1} \times 2^{m-1}=2^{n-2} = |G|  = |H|\times |G/H|.$ This gives $ |G/H| = n/2.$ Thus $ H_1, H_2, \dots, H_{n/2}$ are the only cosets of $H$ in $G.$ 
\end{proof}
\begin{lemma}
Let $n=2^m$ with $ m\geq 2 $ and  $G, H_i ,$ $1\leq i \leq n/2$ be as in Lemma 2.3  and $ C = C(\emptyset, S)$ be the $2n$-cycle in $Q_n$ with edge direction sequence $ S = (1, 2, \ldots, n, 1, 2, \dots, n).$  Then, for each $ i, $  the members of the collection $\{ C(A, S) \colon A\in H_i\}$ are mutually vertex-disjoint $2n$-cycles and their union is a spanning 2-regular subgraph of $Q_n.$  
\end{lemma}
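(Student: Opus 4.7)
My plan is to reduce pairwise vertex-disjointness of the cycles $\{C(A, S) \colon A \in H_i\}$ to a concrete set-theoretic assertion about $V(C)$, which is then dispatched using Lemma 2.3(1); spanning will follow from a cardinality count, and $2$-regularity from the fact that the union is a disjoint union of cycles.

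First I would enumerate $V(C)$ explicitly. Starting from $\emptyset$ and applying the direction sequence $S = (1, 2, \ldots, n, 1, 2, \ldots, n)$ yields, in order, the $n+1$ \emph{initial} sets $L_k = \{1, 2, \ldots, k\}$ for $k = 0, 1, \ldots, n$, followed by the $n-1$ \emph{terminal} sets $R_j = \{j+1, j+2, \ldots, n\}$ for $j = 1, 2, \ldots, n-1$, giving $2n$ distinct vertices in total. Note that the vertices of $C$ containing $n$ are precisely $L_n, R_1, R_2, \ldots, R_{n-1}$, and those not containing $n$ are $L_0, L_1, \ldots, L_{n-1}$.

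Next, by Lemma 2.1, the cycles $C(A, S) = \sigma_A(C)$ and $C(B, S) = \sigma_B(C)$ share a vertex if and only if $A \Delta B = V_1 \Delta V_2$ for some $V_1, V_2 \in V(C)$; and when $A, B \in H_i$ the element $A \Delta B$ lies in $H \subseteq G \subseteq \mathcal{P}([n-1])$. So it suffices to show that $V_1 \Delta V_2 \in H$ forces $V_1 = V_2$. A brief case analysis does this. If $V_1, V_2$ lie on opposite sides of the partition above, then exactly one contains $n$, so $n \in V_1 \Delta V_2$, and hence $V_1 \Delta V_2 \notin G$. Otherwise, a direct computation gives $L_k \Delta L_l = \{\min(k,l)+1, \ldots, \max(k,l)\}$, $R_k \Delta R_j = \{\min(k,j)+1, \ldots, \max(k,j)\}$, and $L_n \Delta R_j = \{1, 2, \ldots, j\}$, all of which are (possibly empty) consecutive strings in $[n-1]$. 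By Lemma 2.3(1), the only consecutive string belonging to $H$ is $\emptyset$, so $V_1 \Delta V_2 = \emptyset$, i.e., $V_1 = V_2$. Hence the cycles indexed by $H_i$ are pairwise vertex-disjoint.

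Finally, by Lemma 2.3, $|H_i| = |H| = 2^{n-m-1}$ and each cycle has $2n = 2^{m+1}$ vertices, so the disjoint union covers $2^{n-m-1} \cdot 2^{m+1} = 2^n = |V(Q_n)|$ vertices; thus it is spanning, and being a vertex-disjoint union of cycles it is $2$-regular. The only non-routine step is the case analysis for $V_1 \Delta V_2$, which becomes transparent once $V(C)$ is partitioned by whether the coordinate $n$ appears; everything else is bookkeeping and a cardinality count.
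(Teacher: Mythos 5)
Your proposal is correct and takes essentially the same route as the paper's proof: both reduce vertex-disjointness to the observation that a shared vertex would force $A\Delta B$ (an element of $H$) to be either a nonempty consecutive string or a set containing $n$, contradicting Lemma 2.3(1), and both deduce the spanning $2$-regular conclusion from the count $2^{n-m-1}\cdot 2n = 2^n$. The only (minor) streamlining is that you note $A\Delta B\in H$ for $A,B$ in \emph{any} coset $H_i$, treating all cosets uniformly, whereas the paper proves the statement for $H_1$ and transfers it to $H_i$ via the automorphism $\sigma_{[2i-2]}$.
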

\begin{proof}
Let us recall that $G = \{A \subset [n-1]\colon |A|~{\rm even}\}$ is a group with respect to symmetric difference $\Delta$ and $H$ is the subgroup of $G$ generated by the collection of $2$-element sets given by \linebreak
 ${ \mathsf K =\{\{j,2^{i-1}+j\}\colon j=1,2, \ldots, 2^{i-1}-1,~i=2,3,\ldots,m\}}.$ By Lemma 2.3 (2), $H$ does not contain a consecutive string and further, $H_1, H_2, \ldots, H_{n/2}$ are cosets of $H$ in $G,$ where $H_1=H$ and $H_i = [2i-2]\Delta H$ for $ 2\leq i \leq n/2.$  
By Theorem 2.2,  ${\mathcal C} = \{\sigma_A(C)\colon A \in G\}=\{C(A, S)\colon A \in G\}$ is the collection of edge-disjoint $2n$-cycles that decompose $Q_n.$  Let 
${\mathcal C}_i = \{\sigma_A(C)\colon A \in H_i\}=\{ C(A, S)\colon A\in H_i\}$ for $i = 1, 2, \ldots, n/2.$ Then ${\mathcal C}_1, {\mathcal C}_2, \ldots, {\mathcal C}_{n/2}$ partition ${\mathcal C}.$

We prove that  any two members of ${\mathcal C}_1$ vertex-disjoint. Consider two distinct members $C(A, S)$ and $C(B, S)$ of ${\mathcal C}_1.$ Then $ A \neq B$ and $ A, B \in H_1=H .$  Therefore $A\Delta B \in H $ and $ A\Delta B \neq \emptyset.$ Since the edge direction sequence of these cycles is  $S = (1, 2, \ldots, n, 1, 2, \dots, n),$  the vertices of the cycle  $C(A, S)$ are $A_0 = A, A_1, A_2, \ldots,  A_{2n-1}, A_{2n} =A$ such that  $ A_{j-1}\Delta A_j = \{j\}$ if $1\leq j \leq n $ and  $ A_{j-1}\Delta A_j = \{j-n\}$ if $n+1\leq j \leq 2n .$  By using an induction argument on $j,$ it follows that $ A_j = A \Delta [j]$ if $1\leq j \leq n $ and  $ A_j = A\Delta [n]\Delta[j-n]$ if $n+1\leq j \leq 2n .$  Hence $A = A_j\Delta[j]$ or $A = A_j\Delta [n]\Delta[j-n].$ Similarly, the vertices of $C(B,  S)$ are  $B_0 = B, B_1, B_2, \ldots, B_{2n-1}, B_{2n} =B,$ where  $B_j = B \Delta [j]$ if $1\leq j \leq n $ and  $  B_j = B \Delta  [n] \Delta [j-n]$ if $n+1\leq j \leq 2n .$ 

Suppose the cycles $C(A, S)$ and $C(B, S)$ share  a vertex. Then $ A_p = B_q$ for some $ 1\leq p, q \leq 2n.$ We may assume that $ p \leq q.$  If $ p =q,$ then $ A\Delta B = \emptyset,$ a contradiction. Hence $ p < q.$ If $ q\leq n,$   then $A\Delta B = \{p+1, p+2, \ldots, q\}$ is a consecutive string, a contradiction. Similarly, $n \leq p $  gives a contradiction. If $ p<n < q,$ then $n \in A\Delta B,$ a contradiction. 

Thus the members of ${\mathcal C}_1$ are mutually vertex-disjoint $2n$-cycles of $Q_n.$  Since $ |C_1| \times 2n = |H| \times 2n = 2^{n-m-1}\times 2n = 2^n,$ the union of the  of these $2n$-cycles is a 2-regular spanning subgraph of  $Q_n.$ 

Let $ 1< i \leq n/2.$ Consider ${\mathcal C}_i = \{C(A, S)\colon A\in H_i\} = \{C(A, S)\colon A\in [2i-2]H\} = \{ C([2i-2]B,S)\colon B \in H\} = \{\sigma_{[2i-2]} C(B, S)\colon B\in H\} = \{\sigma_{[2i-2]} (D)\colon D\in {\mathcal C}_1\}.$ Note that $\sigma_{[2i-2]}$ is an automorphism of $Q_n.$ Therefore like ${\mathcal C}_1,$ the members of ${\mathcal C}_i$ are  mutually vertex-disjoint $2n$-cycles and their union is a spanning 2-regular subgraph of $Q_n.$ 
\end{proof}
\vskip .2cm
\noindent
\textbf{More Properties of $H:$}
 \vskip .2cm

We prove some more properties of $H$ in the three lemmas given below. One can skip them till Section 5, as they are used there only.

\begin{lemma}
 Let $n=2^m$ and $H$ be the group as in Lemma 2.3. Then
\begin{enumerate} 
 \item No element of $H$ contains $2^m$ and $2^{m-1}.$
\item Suppose $A\in H$ contains $2^k,$ then for some $t$  it must contain $2^k+\sum_{i=1}^{t} 2^{k_i'},$ where $k_i'>k.$ 
 \item $\{1,2k+1\} \in H$, $1 \leq k \leq n/2-1.$
\end{enumerate}
\end{lemma}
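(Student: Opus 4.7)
My plan is to handle the three parts by distinct arguments that all exploit the structure of the generating set $\mathsf K$. For part (1), since $2^m = n \notin [n-1]$, no subset of $[n-1]$ can contain $2^m$; I will show analogously that $2^{m-1}$ appears in no generator in $\mathsf K$. A generator $\{j, 2^{i-1}+j\}$ with $2 \leq i \leq m$ and $1 \leq j \leq 2^{i-1}-1$ can have $j = 2^{m-1}$ only if $2^{m-1} \leq 2^{i-1}-1$, impossible; and $2^{i-1}+j = 2^{m-1}$ forces $j = 2^{m-1} - 2^{i-1}$, which is either $0$ (when $i = m$) or at least $2^{i-1}$ (when $i < m$), contradicting $1 \leq j < 2^{i-1}$. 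Since symmetric differences cannot produce an element absent from every summand, no $A \in H$ contains $2^{m-1}$.

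For part (3) I will write the odd integer $2k+1 \in [n-1]$ in binary as $1 + 2^{i_1} + \cdots + 2^{i_s}$ with $1 \leq i_1 < \cdots < i_s \leq m-1$, set $p_0 = 1$, and $p_j = 1 + 2^{i_1} + \cdots + 2^{i_j}$ for $j = 1, \ldots, s$. Each consecutive pair $\{p_{j-1}, p_j\} = \{p_{j-1}, p_{j-1} + 2^{i_j}\}$ satisfies $p_{j-1} \leq 2^{i_{j-1}+1} - 1 < 2^{i_j}$, so it lies in $K_{i_j+1}$ and is a generator. The telescoping identity $\{p_0,p_1\} \Delta \{p_1,p_2\} \Delta \cdots \Delta \{p_{s-1},p_s\} = \{p_0, p_s\} = \{1, 2k+1\}$ then places $\{1, 2k+1\}$ inside $H$.

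Part (2) is the main obstacle and I plan to prove it by induction on $m$, with the base $m = 2$ checked directly from $H = \{\emptyset, \{1,3\}\}$. A direct dimension count gives $|\mathsf K| = \sum_{i=2}^m (2^{i-1} - 1) = 2^m - m - 1 = \log_2 |H|$, so $\mathsf K$ is an $\mathbb{F}_2$-basis of $H$. Hence every $A \in H$ admits a unique decomposition $A = B \Delta C$ in which $B$ uses only generators from $K_2 \cup \cdots \cup K_{m-1}$ (so $B$ lies in the analogous subgroup $H'$ attached to $m-1$ and $B \subseteq [2^{m-1}-1]$) and $C$ uses only generators from $K_m$. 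Because the pairs in $K_m$ are pairwise disjoint, $C = J \cup (2^{m-1} + J)$ for some $J \subseteq [2^{m-1}-1]$, giving the disjoint decomposition $A = (B \Delta J) \cup (2^{m-1} + J)$. Given $2^k \in A$, part (1) forces $k \leq m-2$, hence $2^k$ lies in $B \Delta J$. If $2^k \in J \setminus B$, then $2^{m-1} + 2^k \in A$ already has the required form with $t = 1$ and $k_1' = m-1$. If $2^k \in B \setminus J$, the inductive hypothesis applied to $B \in H'$ furnishes an element $b \in B$ of the form $2^k + \sum_i 2^{k_i'}$ with $k_i' > k$; when $b \notin J$ this $b$ itself lies in $A$, while when $b \in J$ the element $2^{m-1} + b \in A$ has the same shape with the extra higher bit $m-1$ appended. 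The subtle step is precisely this last sub-case: one must switch to the upper copy $2^{m-1} + b$ to recover a witness in $A$ when the natural candidate $b$ is cancelled by $C$.
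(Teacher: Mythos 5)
Your proposal is correct, and it is instructive to compare it with what the paper actually does: for part (3) you and the authors use the identical telescoping trick, writing $2k=\sum_i 2^{k_i}$ with increasing exponents and chaining generators $\{p_{j-1},p_{j-1}+2^{i_j}\}$ --- your only addition is the verification $p_{j-1}\leq 2^{i_{j-1}+1}-1<2^{i_j}$ showing each pair really lies in $K_{i_j+1}$, a point the paper leaves implicit when it says "each pair involved in the symmetric difference\ldots belongs to $H$." For parts (1) and (2), however, the paper offers no argument at all ("Clearly, (1) and (2) follows from the definition of $H$"), so your proofs fill a genuine gap rather than diverge from an existing route. Your inspection of the generators for (1) is the obvious intended argument. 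For (2), your decomposition $A=(B\Delta J)\cup(2^{m-1}+J)$ --- peeling off the $K_m$-layer, whose pairs are mutually disjoint, so that $C=J\cup(2^{m-1}+J)$, with $B$ in the subgroup $H'$ attached to $m-1$ --- is exactly the structural device the authors themselves deploy in the proofs of Lemma 2.3(1) and Lemma 2.7(3), so your induction on $m$ is very much in the spirit of "the definition of $H$," just carried out honestly; the delicate sub-case you flag (the witness $b$ from the inductive hypothesis being cancelled because $b\in J$, repaired by passing to $2^{m-1}+b$, legitimate since $b\leq 2^{m-1}-1$ forces all its exponents $k_i'\leq m-2$, so the appended bit $2^{m-1}$ is new and exceeds $k$) is precisely the point a careless reading would miss. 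Two minor remarks: the dimension count $|\mathsf K|=2^m-m-1=\log_2|H|$ making $\mathsf K$ a basis is harmless but unnecessary, since mere existence of a splitting $A=B\Delta C$ (not uniqueness) is all your argument uses; and note that part (1) at level $m-1$ actually forces $k\leq m-3$ in the case $2^k\in B$, which restricts but does not affect the inductive appeal.
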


\begin{proof}
Clearly, (1) and (2) follows from the definition of $H.$ Since $2k$ is even for all $k\geq 1,$ it can be written as the sum of distinct powers of $2.$ Let $2k = \sum_{i=1}^{t}2^{k_i}.$ Then $\{1,2k+1\}=\{1,2^{k_1}+1\}\Delta\{2^{k_1}+1,2^{k_1}+2^{k_2}+1\}\Delta\{2^{k_1}+2^{k_2}+1,2^{k_1}+2^{k_2}+2^{k_3}+1\}\Delta\ldots \Delta\{\sum_{i=1}^{t-1}2^{k_i}+1, \sum_{i=1}^{t}2^{k_i}+1\} \in H,$ as each pair involved in the symmetric difference on right hand side belongs to $H.$ This proves (3).

 \end{proof}

By Lemma 2.3(2), no member of $H$ is a consecutive string. However, a member of $H$ may be the union of two disjoint consecutive strings. In the following lemma, we prove some properties of such members of $H.$

\begin{Not} Let $A$ and $B$ be sets of integers. Then we define $A+2^k=\{a+2^k \colon a \in A\}.$  
We write $A < 2^k,$ if $a<2^k$ for all $a \in A,$ and similarly $A > 2^k,$ if $a > 2^k$ for all $a \in A.$ 
Also, by $A<B$ we mean that the largest element of $A$ is smaller that the smallest element of $B.$
\end{Not}
\begin{lemma}
Let $G$ and $H$ be the groups as above and let $A = A_1 \cup A_2 \in G,$ where $A_1$ and $A_2$ are disjoint consecutive strings. Then
 \begin{enumerate}
  \item if $A \in H,$ then there exists $B \in H$ such that $A\Delta B = S_1 \cup S_2,$ where $S_1$ and $S_2$ are consecutive strings such that $S_1 + 2^k = S_2$ for some $k$;
	\item if $A \in H$ and $A_1 = [r],$ then there exists a positive integer $k$ such that $A_1 < 2^k < A_2 < 2^{k+1}.$ 
 \item if $A_1 = [r]$ and $A_2 =\{ t+1, t+2, \dots, t+s \}$ for some $ s, t >  r \geq 1,$ then $A\notin {H};$ 
\item  if $A_1 = [2^k]$ and $A_2 =\{t +1, t +2, \dots, t +s \},$ where $t> 2^{k+1},$ $s < 2^k,$ then $ A\notin {H};$
\item  if $A_1 = [r]$ and $A_2 =\{ \sum_{i=1}^{t}2^{k_i} +1, \sum_{i=1}^{t}2^{k_i} +2, \dots, \sum_{i=1}^{t}2^{k_i} +s \},$ where $k_i < k_{i+1},$ $1 \leq s < r < 2^{k_1},$ then $ A \notin {H}.$  
  
 \end{enumerate}
\end{lemma}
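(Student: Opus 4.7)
My approach centers on a characterization of $H$ derived from its generators. Each $\{j,j+2^{i-1}\}\in\mathsf K$ consists of two elements sharing the same lowest set bit (since $j<2^{i-1}$ forces the bits of $j$ and $j+2^{i-1}$ below position $i-1$ to agree). Let $E_j=\{a\in[n-1]:\mathrm{lsb}(a)=j\}$ for $j=0,1,\ldots,m-1$; these classes partition $[n-1]$ and satisfy $|E_j|=2^{m-j-1}$. Every generator in $\mathsf K$ lies inside a single $E_j$ and has cardinality $2$, so every element of $H$ meets each $E_j$ in an even number of points. Counting such subsets gives $\prod_j 2^{|E_j|-1}=2^{n-m-1}=|H|$ (using Lemma 2.3), so the containment is an equality: $A\in H$ iff $|A\cap E_j|$ is even for every $j$. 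This already recovers Lemma 2.5 and is my main tool.

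For part (1), given $A=A_1\cup A_2\in H$, the parity condition pairs up the elements of $A$ within each class $E_j$. I would construct $B\in H$ as a symmetric difference of generators chosen to cancel ``mismatched'' pairs, leaving only pairs sharing a common shift $2^k$. The consecutive-string shape of $A_1$ and $A_2$ then forces $A\Delta B=S_1\cup(S_1+2^k)$ with $S_1\subseteq[2^k-1]$, so that the shift preserves lowest set bits and the result lies in $H$.

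Parts (2)--(5) follow from (1) by parity bookkeeping. For (2), with $A_1=[r]$, pick $k$ so that $2^{k-1}\leq r<2^k$; then $[r]\cap E_{k-1}=\{2^{k-1}\}$ is a singleton, forcing $|A_2\cap E_{k-1}|$ odd, while $[r]\cap E_j=\emptyset$ for $j\geq k$ forces the corresponding parities of $A_2$ to be even. Together these pin $A_2$ inside $(2^k,2^{k+1})$. For parts (3)--(5), setting $u=t-2^k$ and using $|A_2\cap E_j|\equiv n_j(u)+n_j(u+s)\pmod{2}$ with $n_j(r)=|[r]\cap E_j|$, the membership $A\in H$ becomes the polynomial identity $f_r=f_u+f_{u+s}$ in $\mathbb{F}_2[x]$, where $f_r=\sum_{j\geq 0} n_j(r)x^j$. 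The placements of $A_2$ prescribed in (3)--(5) each directly violate this identity.

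The main obstacle will be part (1): guaranteeing that the symmetric difference of chosen generators produces exactly a single-shift residue $S_1\cup(S_1+2^k)$ rather than a multi-shift combination. Once (1) is secured, parts (2)--(5) reduce to $\mathbb{F}_2[x]$ parity accounting.
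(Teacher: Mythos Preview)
Your lsb-parity characterization of $H$ is correct and genuinely different from the paper's hands-on approach: the paper proves each part by direct induction on $m$ or by explicit symmetric-difference reductions, never isolating a membership criterion. Your criterion $A\in H\iff |A\cap E_j|$ even for all $j$ is cleaner and, in principle, should make (2)--(5) routine.

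However, your execution has a concrete error in part (2). You fix $k$ by $2^{k-1}\le r<2^k$ and claim the parity constraints force $A_2\subset(2^k,2^{k+1})$. This is false: for $m=4$ take $A=\{1,2,3\}\cup\{9,10,11\}=\{1,9\}\Delta\{2,10\}\Delta\{3,11\}\in H$. Here $r=3$, your $k=2$, but $A_2=\{9,10,11\}\not\subset(4,8)$. The lemma still holds (with $k=3$), but the correct $k$ depends on where $A_2$ sits, not just on $r$; your parity conditions $|A_2\cap E_{k-1}|$ odd and $|A_2\cap E_j|$ even for $j\ge k$ do not by themselves pin $A_2$ below $2^{k+1}$. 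The paper's proof of (2) is a descent on the largest power of $2$ separating $A_1$ from $A_2$, which is what you actually need.

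You also over-worry about part (1). As stated, (1) is nearly trivial once you have the characterization: since $\{1,3\}\in H$ for $m\ge 2$, take $B=A\Delta\{1,3\}$ and $S_1=\{1\}$, $S_2=\{3\}$, $k=1$. The paper's elaborate induction on (1) is tracking more than the bare statement records (it is implicitly building the reduction process used later in Lemma~5.1), but the literal claim needs no work. Finally, for (3)--(5) your polynomial identity $f_r=f_u+f_{u+s}$ is the right reformulation, but the substitution $u=t-2^k$ is unexplained (there is no $2^k$ in part (3)), and ``directly violate'' is not a proof; you still owe the verification that the hypotheses on $r,s,t$ in each part rule out $f_r=f_t+f_{t+s}$ in $\mathbb{F}_2[x]$.
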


\begin{proof}
 
(1). As every member of the group $G$ is a subset of $[2^m -1],$ $ A < 2^m.$ Let $k$ be the smallest integer such that $A < 2^k.$ Since $A\neq \emptyset$ and $|A|$ is even,  $ k\geq 2.$  We prove the result by induction on $k.$  Suppose $ A \in H.$ Suppose $k=2,$ then $A=\{1,3\}=\{1\}\cup\{3\} = A_1 \cup A_2.$ Let $ S_1 = A_1 = \{1\}$ and $ S_2 = A_2 = \{3\}$ and $ B = \emptyset.$ Then $S_1$ and $S_2$ are consecutive strings and $ S_1 + 2^1 = S_2$ and further,  $A\Delta B = A = S_1 \cup S_2.$ Therefore the result holds for $k=2.$ Suppose $k > 2.$ Assume that the result holds for all sets $ D = D_1 \cup D_2$ such that $k'$ is the smallest integer with $ D < 2^{k'} $ and $k'< k.$  $A = A_1 \cup A_2 < 2^k,$ $A_1$ and $A_2$ are consecutive strings and by the choice of $k,$ either $2^{k-1} < A < 2^k$ or $A_1 < 2^{k-1} < A_2$  by Lemma 2.5(1).  

Suppose $2^{k-1} < A < 2^k.$  Let $A' =  A-2^{k-1} .$ Then $  A'\in H$  is the union of two disjoint consecutive strings $(A_1-2^{k-1})$ and $(A_2-2^{k-1}).$ Further, the smallest $k'$ for which $A' < 2^{k'}$ is $\leq k-1.$ Therefore, by induction, there exists $B' \in H$ such that $A'\Delta B' = S_1 \cup S_2,$ where $S_1$  and $S_2$ are consecutive strings and  $S_2 = S_1 + 2^{k''}$ for some $ k''.$  Let $B = A\Delta A'\Delta B'.$ Then $ B \in H$ and $A\Delta B =A\Delta(A\Delta A'\Delta B') = S_1 \cup S_2$ as $A\Delta A = \emptyset$ by the definition of symmetric difference.

Suppose $A_1 < 2^{k-1} < A_2$.  Then $A_2 \cup (A_2-2^{k-1})\in H.$  Let  $A' = A\Delta (A_2 \cup (A_2-2^{k-1}))=A_1\Delta (A_2-2^{k-1}).$ Then $ A' \in H.$Suppose $A'=\emptyset,$ then $A_1=(A_2-2^{k-1}).$ By setting $ S_1 = A_1,$ $S_2 = A_2$ and $B= \emptyset,$ we have $A\Delta B = A = S_1 \cup S_2$  with $ S_2 = S_1 + 2^{k-1}.$ Suppose $A' \neq \emptyset.$ Being the symmetric difference of two consecutive strings, $A' $ is union of two consecutive strings as $A'$ cannot be a consecutive string by Lemma 2.3(2). Let $ A' = A_3 \cup A_4,$ where $A_3$ and $A_4$ are disjoint consecutive strings.  Therefore the smallest $k'$ for which $A' < 2^{k'}$ is $\leq k-1.$ By induction hypothesis, there is  $B' \in H$ such that $A'\Delta B' = S_1 \cup S_2,$ where $S_1$ and $S_2$ are consecutive strings and $S_2=S_1+2^{k''}$ for some $k''.$ If $B =  A\Delta B'\Delta A',$ then $ B \in H$ and $ A\Delta B =  A'\Delta B' = S_1 \cup S_2.$ This proves (1).
\vskip.2cm
\noindent
(2). As $A \in H,$ $A \subset [2^m].$ By Lemma 2.5(1), $2^{m-1}$ does not belong to $A$ and hence, it does not belong to both $A_1$ as well as $A_2.$ Moreover, $A_1$ is a consecutive string which starts with $1.$ So $A_1 < 2^{m-1}.$ If $A_2 > 2^{m-1},$ then by Lemma 2.5(1), $2^m$ does not belong to $A_2.$ Hence we are through. If $A_2 < 2^{m-1},$ then $A \subset [2^{m-1}].$ Again by Lemma 2.5(1) $2^{m-2} \notin A.$ Therefore  $A_2 < 2^{m-2}.$ Again if $A_2 > 2^{m-2},$ then we are through. If not, continuing the above arguments, there exists a positive integer $k$ such $A_1 < 2^k < A_2 < 2^{k+1},$ as $n$ is finite. 
\vskip.2cm
\noindent
(3). Here $n=2^m,$ $ |A_1| = r,  |A_2| = s $ and so $|A_2 | - |A_1|  = s -r > 0.$  Since $ r \leq t -1 $ and $ r\leq s-1,$ $ 2r \leq t  + s -2< s + t \leq n -1,$  and so $ r < n/2 = 2^{m-1}.$  If $ n/2 \in A,$ then, by  Lemma 2.5(1), $ A \notin H.$ Therefore assume that $ n/2 \notin A.$  Hence   $ A_1 \subset [n/2 - 1]$ and  $ A_2 \subset [n/2 - 1]$ or $ A_2 \subset [n - 1] - [n/2].$ 

We prove the result by induction on $n = 2^m.$    If $n = 2$ or $ n = 2^2,$ then such $A$ does not exist in $G.$  Suppose $ n = 2^3.$  Then  the only element of $G$ that satisfy the hypothesis is $\{1, 3,4,5\}.$ However, this does not belong to $H.$ Therefore the result holds for $ n = 2^3.$ Suppose $ m \geq 4.$  Assume that the result holds for $ 2^{m-1} \geq 8.$ We prove the result for $ n = 2^m.$ 

 Let  $  G' = \{ B \colon B\subset [2^{m-1} - 1]~{\rm  and }~|A| ~{\rm  is~ even} \}.$ Then $G'$  is a subgroup of $G.$ Let  $\mathsf K' =\{\{j,2^{i-1}+j\}\colon j=1,2,\ldots,2^{i-1}-1,i=2,3,\ldots,m-1\}.$  Let  $H'$ be the subgroup of $G'$ generated by $\mathsf K'.$  Then $ H' \subset H.$ Suppose $A_2 \subset [n/2-1].$ Then $A \in G'.$ By induction, $A \notin H'.$ Therefore $A \notin H.$   Suppose  $A_2  $ is not a subset of $[n/2-1].$ Then $ A_2 \subset [n - 1] - [n/2].$ Let $ A_2 = \{ y_1, y_2, \dots, y_s\},$ with $y_{i+1}=y_i+1$  Then  $ y_i = n/2 + x_i$ for some $x_i \in [n/2 -1] $ for $ i=1, 2, \dots, s.$ Since $y_i$'s are consecutive integers, $x_i$'s are also consecutive.  Therefore $ \{x_i,  y_i\} \in H - H'.$  Let $Q = \cup_{i=1}^{s} \{x_i,  y_i\}= A_2' \cup A_2,$ where $A_2' = \{x_1, x_2, \dots, x_s\}.$ Then $ Q \in H - H'.$  Assume that $ A\in H.$  To get the elements of $A_2$ in $H,$  it follows that $ A = ( P_1\Delta P_2\Delta \dots\Delta P_j )\Delta Q $ for some $P_1, P_2, \dots, P_j  \in H'.$ Hence $ A\Delta Q =  P_1 \Delta P_2 \Delta \dots \Delta P_j.$ The element on right side of this equation is in $H'$ and hence in $H.$ Therefore $ A \Delta Q = (A_1 \cup A_2)\Delta ( A_2' \cup A_2) = A_1 \Delta A_2'  \in H'.$ We obtain a contradiction by showing that $A_1\Delta A_2'  \notin H'.$ 

By Lemma 2.3(2),  $A_1\Delta A_2' = \{1, 2, \dots, r\}\Delta \{x_1, x_2, \dots, x_s\}$ is not  a set of consecutive integers. Therefore $x_1 \neq 1$ and also $x_1 \neq r+1.$  Suppose $ x_1 > r+1.$ Then $A_1$ and $A_2'$ are disjoint and hence $ A_1\Delta A_2' = A_1 \cup A_2'.$
Since $A_1 \cup A_2 \in G,$  either both $|A_1|$ and $|A_2|$  are even or both are odd. Hence either both $|A_1|$ and $|A_2'|$  are even or both are odd as $|A_2'| = |A_2|.$ Therefore $A_1 \cup A_2' \in G'.$ Also,  $ |A_2'| - |A_1| = s - r.$ Therefore,  by induction, $ A_1 \cup A_2' \notin H',$ a contradiction. Hence $ 1< x_1 \leq r,$ that is, $A_1$ intersects $A_2'.$ Then $A_1 \cap A_2'$ is the set of consecutive integers from $x_1$ to $r.$  Consequently, $A_1 \Delta A_2' = \tilde{A_1} \cup \tilde{A_2'},$ where $\tilde{A_1} = \{1, 2, \dots, x_1 - 1\}$ and $\tilde{A_2} = \{ r + 1, r+2, \dots, x_s-1, x_s \}.$  The sets  $\tilde{A_1}$ and $\tilde{A_2}$ are non-empty consecutive strings and  disjoint with each other. Therefore $\tilde{A_1} \cup \tilde{A_2} \in G'.$ By induction, $\tilde{A_1} \cup \tilde{A_2} \notin H',$ a contradiction.
 \vskip.2cm
\noindent
(4). Assume that $A  \in H.$ By definition of $H,$ as $2^k \in A_1, \alpha = 2^k+2^{k'} \in A_2$ for some $k'>k.$ Therefore $\alpha = t+i$ for some $i \in \{1,2,\ldots,s\}.$ Consequently, $A_2 = \{\alpha-i+1, \alpha-i+ 2,\ldots,\alpha, \alpha+1,\alpha+2,\ldots,\alpha+(s-i)\}.$  Obviously, $ \{j, \alpha+j \} = \{ j, 2^k + j\}\Delta\{ 2^k + j, \alpha + j\} \in H$ and also $\{2^k - j, \alpha - j \} \in H$ for each $ 1\leq j< 2^k.$ Therefore $B = \{1,2,\ldots,s-i\} \cup \{\alpha+1,\alpha+2,\ldots,\alpha+s-i\} \in H$ and    $C=  \{ 2^k - i + 1, \dots, 2^k \}  \cup \{\alpha-i+1, \alpha-1,\ldots,\alpha \} \in H .$ Therefore $ A\Delta B\Delta C \in H.$ But $A\Delta B\Delta C =  \{s-i+1,s-i+2,\ldots,2^k-i\} $ is a consecutive string, a contradiction by Lemma 2.3(2).  
\vskip.2cm
\noindent
(5). Suppose $ A  \in H.$ By definition of $H,$ $A_2 \cup (A_2-2^{k_t}) \in H.$ Similarly, $(A_2-2^{k_t}) \cup (A_2-2^{k_t}-2^{k_{t-1}}),$ $(A_2-2^{k_t}-2^{k_{t-1}}) \cup (A_2-2^{k_t}-2^{k_{t-1}}-2^{k_{t-2}}),$ \ldots $(A_2- \sum_{i=3}^{t}2^{k_i}) \cup (A_2- \sum_{i=2}^{t}2^{k_i}) \in H.$ Therefore $(A_1 \cup A_2)\Delta (A_2\cup (A_2-2^{k_t}))\Delta ((A_2-2^{k_t}) \cup (A_2-2^{k_t}-2^{k_{t-1}}))\Delta \ldots \Delta((A_2-\sum_{i=3}^{t}2^{k_i}) \cup (A_2- \sum_{i=2}^{t}2^{k_i})) = A_1 \cup (A_2 -\sum_{i=2}^{t}2^{k_i}) = \{1,2,\ldots,r\} \cup \{2^{k_1}+1,2^{k_1}+2,\ldots,2^{k_1}+s\} \in H.$  Again, by definition of $H,$ $\{j,2^{k_1}+j\} \in H$ for all $1\leq j < 2^{k_1}$. Therefore  $\{1,2,\ldots,s\}\cup\{2^{k_1}+1,2^{k_1}+2,\ldots,2^{k_1}+s\} \in H.$ By taking its symmetric difference with $A_1 \cup (A_2 -\sum_{i=2}^{t}2^{k_i}),$ we get $\{s+1,s+2,\ldots,r\}\in H$, a contradiction by Lemma 2.3(2). 
\end{proof}

The {\it Cartesian product} $G \Box H$ of two graphs $G$ and $H$ is the graph whose vertex set is $V(G) \times V(H)$; and any two vertices $(U,U')$ and $(V,V')$ in it are adjacent if and only if either $U = V$ and $U'$ is adjacent to $V'$ in $H,$ or $U' = V'$ and $U$ is adjacent to $V$ in $G$.

The proof of the following lemma is trivial.

\begin{lemma}\cite{Elz}	
\begin{enumerate}
\item Let a graph $G_1$ be decomposed into spanning subgraphs $H_1, H_2, \ldots, H_r$ and let a graph $G_2$ be  decomposed
into spanning subgraphs $F_1, F_2, \ldots,F_r.$  Then the graph
$G_1 \Box ~ G_2$ can be decomposed into spanning subgraphs $H_1
\Box ~ F_1, H_2 \Box ~ F_2, \ldots, H_r \Box ~ F_r.$
\item Let $G_1$ be a graph with components $H_1, H_2, \ldots, H_r$ and let $G_2$ be a graph with components $F_1, F_2, \ldots, F_s.$ Then the components of  $ G_1 \Box ~ G_2$ are $ H_i \Box ~ F_j$ with $ i= 1, 2, \ldots, r$ and $ j = 1, 2, \ldots, s.$
\end{enumerate}
\end{lemma}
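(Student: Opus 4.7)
The plan is to unpack the definitions of the Cartesian product and of decomposition; both parts reduce to the elementary observation that every edge of $G_1 \Box G_2$ fixes exactly one coordinate and varies the other along an edge of the corresponding factor.

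For part~(1), I would partition $E(G_1 \Box G_2)$ into its \emph{horizontal} edges of the form $\{(u,x),(v,x)\}$ with $uv \in E(G_1)$, and \emph{vertical} edges of the form $\{(x,u),(x,v)\}$ with $uv \in E(G_2)$. Since $\{H_i\}$ decomposes $G_1$, each horizontal edge arises from a unique edge of $G_1$, which belongs to exactly one $H_i$; because $F_i$ is spanning, $x \in V(F_i)$, so this horizontal edge lies in $E(H_i \Box F_i)$ and in no other $H_j \Box F_j$. The vertical edges are handled symmetrically via the decomposition $\{F_i\}$ of $G_2$. Thus the $H_i \Box F_i$ are pairwise edge-disjoint and together cover $E(G_1 \Box G_2)$, and each is spanning since $V(H_i \Box F_i) = V(G_1) \times V(G_2) = V(G_1 \Box G_2)$.

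For part~(2), I would first verify that each $H_i \Box F_j$ is connected. Given vertices $(u,x)$ and $(v,y)$ of $H_i \Box F_j$, concatenate a $u$-$v$ walk in $H_i$ (traversed at fixed second coordinate $x$) with an $x$-$y$ walk in $F_j$ (traversed at fixed first coordinate $v$) to obtain a walk in $H_i \Box F_j$. Next I would show that no edge of $G_1 \Box G_2$ joins $V(H_i)\times V(F_j)$ to $V(H_{i'})\times V(F_{j'})$ when $(i,j) \neq (i',j')$: an edge of the product uses either an edge of $G_1$, which lies inside a single component $H_i$, or an edge of $G_2$, which lies inside a single component $F_j$; in both cases one coordinate is fixed and the other stays in its component. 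Combined with the evident partition of $V(G_1 \Box G_2)$ into the sets $V(H_i) \times V(F_j)$, this identifies the $H_i \Box F_j$ as precisely the connected components.

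No serious obstacle is expected; the lemma really is an exercise in definitions, as the authors remark. The only point requiring a moment of care is the index matching in~(1): the statement pairs $H_i$ with $F_i$ for the \emph{same} $i$, and the argument above respects this because horizontal edges are distributed among the pieces by the $H_i$-index while vertical edges are distributed by the $F_i$-index, and both distributions use the common index set $\{1,\ldots,r\}$.
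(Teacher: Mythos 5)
Your proof is correct, and it is exactly the routine definitional verification the paper has in mind: the authors state that the proof of this lemma is trivial and simply cite El-Zanati and Vanden Eynden, giving no argument of their own. Your write-up supplies the details soundly — in particular you correctly identify the one nontrivial point, that the spanning hypothesis on $F_i$ is what guarantees a horizontal edge $\{(u,x),(v,x)\}$ with $uv\in E(H_i)$ actually lies in $H_i\,\Box\,F_i$ — so there is nothing to amend.
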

 \indent The following is an older result due to Kotzig \cite{Kot}.
\begin{lemma} [\cite{Kot}] If $G$ is the Cartesian product of two cycles, then $G$ can be
decomposed into two hamiltonian cycles.
\end{lemma}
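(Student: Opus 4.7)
The plan is to construct two edge-disjoint Hamiltonian cycles $H_1,H_2$ whose union is $E(G)$ for $G := C_m \Box C_n$; since $G$ is $4$-regular with $2mn$ edges, such a pair exhausts $E(G)$ and gives the decomposition. Label the vertices $(i,j)\in\mathbb{Z}_m\times\mathbb{Z}_n$, and distinguish \emph{horizontal} edges $(i,j)(i,j+1)$ from \emph{vertical} edges $(i,j)(i+1,j)$.

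The simplest candidate for $H_1$ is the row-major boustrophedon (snake) cycle --- traverse row $0$ left-to-right, step to row $1$ via $(0,n-1)(1,n-1)$, traverse row $1$ right-to-left, step via $(1,0)(2,0)$, and so on, closing for $m$ even via $(m-1,0)(0,0)$. A direct check on $C_4 \Box C_4$ shows, however, that the complement $H_2 := E(G) \setminus E(H_1)$ splits into the snake's boundary companion cycle plus one $m$-cycle per interior column, so it is not Hamiltonian; $H_1$ must be modified to incorporate vertical edges in the interior columns as well. An ad hoc Hamiltonian $H_1$ for $C_4 \Box C_4$ whose complement is also Hamiltonian is $(0,0)-(0,1)-(0,2)-(1,2)-(1,3)-(0,3)-(3,3)-(3,2)-(2,2)-(2,3)-(2,0)-(1,0)-(1,1)-(2,1)-(3,1)-(3,0)-(0,0)$, and one checks directly that $G\setminus H_1$ is also a $16$-cycle visiting every vertex. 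Guided by this and by similar explicit constructions in the small cases $C_3\Box C_3$ and $C_3\Box C_4$, I would then proceed by induction on $m+n$, extending by inserting a row or a column at a time and splicing both existing Hamiltonian cycles via short detours that pass once through the new row (resp.\ column) at distinct, carefully chosen positions.

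The main obstacle is the parity case analysis. Each of the four combinations of $m,n\pmod 2$ requires a slightly different closing rule for the base construction and a different splicing pattern at the inductive step; in particular, one must locate, in both of the existing cycles $H_1,H_2$, compatible anchor edges at which the detour insertion preserves connectedness of each cycle, and such anchors exist only after a parity-dependent rotation of the construction. The doubly-odd case $m,n$ both odd is the most intricate, since neither pure row- nor pure column-snakes close cleanly and a bespoke base construction anchored at $C_3\Box C_3$ is required. Verifying connectedness of both cycles in each of the four cases --- not merely $2$-regularity, which is automatic --- is the bulk of the work, after which Kotzig's decomposition follows.
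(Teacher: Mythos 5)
Your proposal is a plan, not a proof: everything past the verified $C_4\Box C_4$ example is deferred, and the deferred part is where the theorem actually lives. The inductive step is not routine in the way your sketch suggests. Passing from $C_m\Box C_n$ to $C_{m+1}\Box C_n$ is not a subgraph extension: the $n$ wrap-around vertical edges between row $m-1$ and row $0$ disappear and are replaced by $2n$ edges through the new row, so each of the two old Hamiltonian cycles must surrender whichever of the deleted seam edges it used, and the two rerouted cycles must between them absorb all $3n$ new or changed edges exactly once while each remains a \emph{single} closed cycle. The existence of "compatible anchor edges" for both cycles simultaneously --- in all four parity classes, and for an arbitrary decomposition handed to you by the induction hypothesis, which gives you no control over how the two cycles meet the seam --- is precisely the content of the lemma; you assert it rather than prove it, and without strengthening the induction hypothesis to record some structure along the seam there is no reason the required anchors exist at every stage. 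Likewise the "bespoke base construction" for the doubly-odd case is named but never exhibited. As written, the argument establishes only the single instance $C_4\Box C_4$ (which I confirm checks out: your $16$-cycle is Hamiltonian and its complement is again a Hamiltonian $16$-cycle).

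For comparison, the paper does not reprove this statement at all --- it cites Kotzig --- and instead reproduces in Remark 2.11 the explicit construction from Mollard and Ramras: for $C(U_1,S)\Box C(U_2,S)$ the two Hamiltonian cycles $\Phi$ and $\Gamma$ are written down in closed form via the edge-direction sequences $\mathcal S$ and $\theta(\mathcal S)$, a uniform "diagonal" traversal with no induction and no parity case analysis; this explicit form is then essential later, since the matching edges of Section 3 are located by position within $\mathcal S$. If you want a self-contained proof of Kotzig's lemma, that is the more tractable route: define both cycles at once by an explicit direction pattern and verify Hamiltonicity directly (each cycle visits the vertices in a predictable shifted-diagonal order, so connectedness reduces to a short counting argument), rather than trying to propagate an uncontrolled pair of cycles through a row-insertion argument.
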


We provide the detailed description of Kotzig's construction of Hamiltonian cycles in the following remark as given in \cite{Mol}.
For that we define a bijective map $\theta$ for the vertices of  $Q_{2n}.$ 

\begin{definition}
Let $\theta~:[2n]\rightarrow [2n]$ be defined as

\begin{tabular}{cclc}
$\theta(i)$ &$=$& $i+n ~ (mod ~2n),$ & if $i \neq n$\\
 &$=$& $2n,$& if $i = n.$
\end{tabular}

\end{definition}

\begin{Rm}
Let $C_1=C(U_1,S)$ and $C_2=C(U_2,S)$ be two $k$-cycles in $Q_n$ with initial vertices $U_1$ and $U_2,$ respectively and the edge direction sequence $S = (s_1,s_2,\ldots,s_k).$ Then $U_1 \subseteq[n],$ $U_2 \subseteq [n]$ and $s_i \in [n]$ for all $i=1,2,\ldots,k.$ By Lemma 2.9, $C_1\Box C_2 $ is decomposable into two $k^2$-cycles (say) $\Phi$ and $\Gamma$  in $ Q_{2n} = Q_n \Box Q_n.$ We assume that both $\Phi$ and $\Gamma$ start with the same initial vertex $(U_1,U_2).$ By considering the adjacency preserving bijective correspondence from $V(Q_n \Box Q_n)$ to $V(Q_{2n})$ given by $(U_1,U_2) \mapsto U_1 \cup \theta(U_2),$ we can write $U_1 \cup \theta(U_2) $ for $(U_1, U_2).$ The edge-direction sequences for the cycles $\Phi$ and $\Gamma$ are $\mathcal S$ and $\theta(\mathcal S)$ respectively, where $\mathcal S$ is given by 
\begin{center}
\begin{tabular}{lll}
$\mathcal S$&$=$&$(s_1,s_2,s_3,\ldots,s_{k-1},\theta(s_1),$\\
&&$s_k, s_1,s_2,\ldots,s_{k-2}, \theta(s_2),$\\
&&$s_{k-1}, s_k,s_1,\ldots,s_{k-3},\theta(s_3),$\\
&&\hspace{2cm}$\vdots$\\
&&$s_2,s_3,s_4,\ldots, s_k,\theta(s_k)).$
\end{tabular}
\end{center}
\noindent By $\theta(\mathcal S),$ we mean the sequence obtained by applying $\theta$ to each term of $\mathcal S.$ Note that $\theta(\theta(s_i))=s_i,$ by definition. Since  $U_1 \subseteq \{1, 2, \ldots, n\}$ and $\theta (U_2) \subseteq \{n+1, n+2, \ldots, 2n\},$ $U_1 \cup \theta(U_2)=U_1\Delta~\theta(U_2)=U_1\theta(U_2).$ Thus  $C_1(U_1,S) \Box C_2(U_2,S)=\Phi(U_1\theta(U_2),\mathcal S) \sqcup \Gamma(U_1\theta(U_2),\theta(\mathcal S))~~{(see~Figure~ 1)}.$

\begin{figure}[h]
    \begin{tikzpicture}[scale=0.6] 
    
 \draw[fill=black](-2.5,0) circle(.07); 
 \draw[fill=black](-2.5,1) circle(.07); 
 \draw[fill=black](-2.5,2) circle(.07); 
 \draw[fill=black](-2.5,3) circle(.07); 
 \draw[fill=black](-2.5,4) circle(.07); 
 \draw[fill=black](-2.5,5) circle(.07); 
 \draw[fill=black](-2.5,6) circle(.07); 
 \draw[fill=black](-2.5,7) circle(.07);    

\draw[fill=black](0,0) circle(.07); 
\draw[fill=black](0,1) circle(.07); 
\draw[fill=black](0,2) circle(.07); 
\draw[fill=black](0,3) circle(.07); 
\draw[fill=black](0,4) circle(.07); 
\draw[fill=black](0,5) circle(.07); 
\draw[fill=black](0,6) circle(.07); 
\draw[fill=black](0,7) circle(.07); 

\draw[fill=black](-1,0) circle(.07); 
\draw[fill=black](-1,1) circle(.07); 
\draw[fill=black](-1,2) circle(.07); 
\draw[fill=black](-1,3) circle(.07); 
\draw[fill=black](-1,4) circle(.07); 
\draw[fill=black](-1,5) circle(.07); 
\draw[fill=black](-1,6) circle(.07); 
\draw[fill=black](-1,7) circle(.07); 

\draw[fill=black](2,0) circle(.07); 
\draw[fill=black](2,1) circle(.07); 
\draw[fill=black](2,2) circle(.07); 
\draw[fill=black](2,3) circle(.07); 
\draw[fill=black](2,4) circle(.07); 
\draw[fill=black](2,5) circle(.07); 
\draw[fill=black](2,6) circle(.07); 
\draw[fill=black](2,7) circle(.07); 

\draw[fill=black](3,0) circle(.07); 
\draw[fill=black](3,1) circle(.07); 
\draw[fill=black](3,2) circle(.07); 
\draw[fill=black](3,3) circle(.07); 
\draw[fill=black](3,4) circle(.07); 
\draw[fill=black](3,5) circle(.07); 
\draw[fill=black](3,6) circle(.07); 
\draw[fill=black](3,7) circle(.07); 

\draw[fill=black](4,0) circle(.07); 
\draw[fill=black](4,1) circle(.07); 
\draw[fill=black](4,2) circle(.07); 
\draw[fill=black](4,3) circle(.07); 
\draw[fill=black](4,4) circle(.07); 
\draw[fill=black](4,5) circle(.07); 
\draw[fill=black](4,6) circle(.07); 
\draw[fill=black](4,7) circle(.07);
 
\draw[fill=black](5,0) circle(.07); 
\draw[fill=black](5,1) circle(.07); 
\draw[fill=black](5,2) circle(.07); 
\draw[fill=black](5,3) circle(.07); 
\draw[fill=black](5,4) circle(.07); 
\draw[fill=black](5,5) circle(.07); 
\draw[fill=black](5,6) circle(.07); 
\draw[fill=black](5,7) circle(.07); 

\draw[fill=black](6,0) circle(.07); 
\draw[fill=black](6,1) circle(.07); 
\draw[fill=black](6,2) circle(.07); 
\draw[fill=black](6,3) circle(.07); 
\draw[fill=black](6,4) circle(.07); 
\draw[fill=black](6,5) circle(.07); 
\draw[fill=black](6,6) circle(.07); 
\draw[fill=black](6,7) circle(.07); 

\draw[fill=black](1,0) circle(.07); 
\draw[fill=black](1,1) circle(.07); 
\draw[fill=black](1,2) circle(.07); 
\draw[fill=black](1,3) circle(.07); 
\draw[fill=black](1,4) circle(.07); 
\draw[fill=black](1,5) circle(.07); 
\draw[fill=black](1,6) circle(.07); 
\draw[fill=black](1,7) circle(.07);

\draw[fill=black](9,0) circle(.07); 
\draw[fill=black](9,1) circle(.07); 
\draw[fill=black](9,2) circle(.07); 
\draw[fill=black](9,3) circle(.07); 
\draw[fill=black](9,4) circle(.07); 
\draw[fill=black](9,5) circle(.07); 
\draw[fill=black](9,6) circle(.07); 
\draw[fill=black](9,7) circle(.07); 

\draw[fill=black](10,0) circle(.07); 
\draw[fill=black](10,1) circle(.07); 
\draw[fill=black](10,2) circle(.07); 
\draw[fill=black](10,3) circle(.07); 
\draw[fill=black](10,4) circle(.07); 
\draw[fill=black](10,5) circle(.07); 
\draw[fill=black](10,6) circle(.07); 
\draw[fill=black](10,7) circle(.07); 

\draw[fill=black](11,0) circle(.07); 
\draw[fill=black](11,1) circle(.07); 
\draw[fill=black](11,2) circle(.07); 
\draw[fill=black](11,3) circle(.07); 
\draw[fill=black](11,4) circle(.07); 
\draw[fill=black](11,5) circle(.07); 
\draw[fill=black](11,6) circle(.07); 
\draw[fill=black](11,7) circle(.07); 

\draw[fill=black](12,0) circle(.07); 
\draw[fill=black](12,1) circle(.07); 
\draw[fill=black](12,2) circle(.07); 
\draw[fill=black](12,3) circle(.07); 
\draw[fill=black](12,4) circle(.07); 
\draw[fill=black](12,5) circle(.07); 
\draw[fill=black](12,6) circle(.07); 
\draw[fill=black](12,7) circle(.07); 

\draw[fill=black](13,0) circle(.07); 
\draw[fill=black](13,1) circle(.07); 
\draw[fill=black](13,2) circle(.07); 
\draw[fill=black](13,3) circle(.07); 
\draw[fill=black](13,4) circle(.07); 
\draw[fill=black](13,5) circle(.07); 
\draw[fill=black](13,6) circle(.07); 
\draw[fill=black](13,7) circle(.07);
 
\draw[fill=black](14,0) circle(.07); 
\draw[fill=black](14,1) circle(.07); 
\draw[fill=black](14,2) circle(.07); 
\draw[fill=black](14,3) circle(.07); 
\draw[fill=black](14,4) circle(.07); 
\draw[fill=black](14,5) circle(.07); 
\draw[fill=black](14,6) circle(.07); 
\draw[fill=black](14,7) circle(.07); 

\draw[fill=black](15,0) circle(.07); 
\draw[fill=black](15,1) circle(.07); 
\draw[fill=black](15,2) circle(.07); 
\draw[fill=black](15,3) circle(.07); 
\draw[fill=black](15,4) circle(.07); 
\draw[fill=black](15,5) circle(.07); 
\draw[fill=black](15,6) circle(.07); 
\draw[fill=black](15,7) circle(.07); 

\draw[fill=black](8,0) circle(.07); 
\draw[fill=black](8,1) circle(.07); 
\draw[fill=black](8,2) circle(.07); 
\draw[fill=black](8,3) circle(.07); 
\draw[fill=black](8,4) circle(.07); 
\draw[fill=black](8,5) circle(.07); 
\draw[fill=black](8,6) circle(.07); 
\draw[fill=black](8,7) circle(.07); 

\draw[fill=black](-1,8.5) circle(.07); 
\draw[fill=black](0,8.5) circle(.07); 
\draw[fill=black](1,8.5) circle(.07); 
\draw[fill=black](2,8.5) circle(.07); 
\draw[fill=black](3,8.5) circle(.07); 
\draw[fill=black](4,8.5) circle(.07); 
\draw[fill=black](5,8.5) circle(.07); 
\draw[fill=black](6,8.5) circle(.07);

\draw (-2.5,0)--(-2.5,1)--(-2.5,2)--(-2.5,3)--(-2.5,4)--(-2.5,5)--(-2.5,6)--(-2.5,7);

\draw (-1,0)--(-1,1)--(-1,2)--(-1,3)--(-1,4)--(-1,5)--(-1,6)--(-1,7);
\draw (0,1)--(0,2)--(0,3)--(0,4)--(0,5)--(0,6)--(0,7);
\draw (1,0)--(1,1);
\draw (1,2)--(1,3)--(1,4)--(1,5)--(1,6)--(1,7);
\draw (2,0)--(2,1)--(2,2);
\draw (2,3)--(2,4)--(2,5)--(2,6)--(2,7);
\draw (3,0)--(3,1)--(3,2)--(3,3);
\draw (3,4)--(3,5)--(3,6)--(3,7);
\draw (4,0)--(4,1)--(4,2)--(4,3)--(4,4);
\draw (4,5)--(4,6)--(4,7);
\draw (5,0)--(5,1)--(5,2)--(5,3)--(5,4)--(5,5);
\draw (5,6)--(5,7);
\draw (6,0)--(6,1)--(6,2)--(6,3)--(6,4)--(6,5)--(6,6);

\draw (-1,0)--(0,0);
\draw (0,1)--(1,1);
\draw (1,2)--(2,2);
\draw (2,3)--(3,3);
\draw (3,4)--(4,4);
\draw (4,5)--(5,5);
\draw (5,6)--(6,6);

\draw (0,0) to[out=100, in=260] (0,7);
\draw (1,0) to[out=100, in=260] (1,7);
\draw (2,0) to[out=100, in=260] (2,7);
\draw (3,0) to[out=100, in=260] (3,7);
\draw (4,0) to[out=100, in=260] (4,7);
\draw (5,0) to[out=100, in=260] (5,7);
\draw (6,0) to[out=100, in=260] (6,7);

\draw (-1,7) to[out=10, in=170] (6,7);

\draw (-1,8.5)--(0,8.5)--(1,8.5)--(2,8.5)--(3,8.5)--(4,8.5)--(5,8.5)--(6,8.5);

\draw  (8,1)--(9,1)--(9,0)--(10,0)--(11,0)--(12,0)--(13,0)--(14,0)--(15,0);
\draw  (8,2)--(9,2)--(10,2)--(10,1)--(11,1)--(12,1)--(13,1)--(14,1)--(15,1);
\draw  (8,3)--(9,3)--(10,3)--(11,3)--(11,2)--(12,2)--(13,2)--(14,2)--(15,2);
\draw  (8,4)--(9,4)--(10,4)--(11,4)--(12,4)--(12,3)--(13,3)--(14,3)--(15,3);
\draw  (8,5)--(9,5)--(10,5)--(11,5)--(12,5)--(13,5)--(13,4)--(14,4)--(15,4);
\draw  (8,6)--(9,6)--(10,6)--(11,6)--(12,6)--(13,6)--(14,6)--(14,5)--(15,5);
\draw  (8,7)--(9,7)--(10,7)--(11,7)--(12,7)--(13,7)--(14,7)--(15,7);
\draw  (15,6)--(15,7);
\draw  (8,0) to[out=100, in=260] (8,7);
\draw  (8,0) to[out=10, in=170] (15,0);
\draw  (8,1) to[out=10, in=170] (15,1);
\draw  (8,2) to[out=10, in=170] (15,2);
\draw  (8,3) to[out=10, in=170] (15,3);
\draw  (8,4) to[out=10, in=170] (15,4);
\draw  (8,5) to[out=10, in=170] (15,5);
\draw  (8,6) to[out=10, in=170] (15,6);

\draw (-2.5,0) to[out=100, in=260] (-2.5,7);

\draw (-1,8.5) to[out=10, in=170] (6,8.5);

\node [below] at (2.5,-0.25){$\Phi$};
\node [below] at (11.5,-0.25){$\Gamma$};

\node [above] at (-2.5,7){\small{$U_1$}};
\node [above] at (8,6.9){\small{$U_1\theta(U_2)$}};
\node [above] at (-.75,6.9){\small{$U_1\theta(U_2)$}};
\node [above] at (2.5,8.8){\small{$\theta(s_k)$}};

\node [below] at (-0.75,8.5){\small{$\theta(s_1)$}};
\node [below] at (0.5,8.5){\small{$\theta(s_2)$}};
\node [below] at (3,8.2){$\ldots$};
\node [below] at (5.75,8.5){\small{$\theta(s_{k-1})$}};
\node [left] at (-1,8.5){\small{$\theta(U_2)$}};

\node [left] at (-2.7,3.5){\small{$s_k$}};

\node [right] at (-2.5,6.5){\small{$s_1$}};
\node [right] at (-2.5,5.5){\small{$s_2$}};
\node [right] at (-2.3,3){$\vdots$};
\node [right] at (-2.5,0.5){\small{$s_{k-1}$}};

\end{tikzpicture}
\caption{Decomposition of $C_1\Box C_2$ into cycles $\Phi$ and $\Gamma$}\label{f3}
\end{figure}
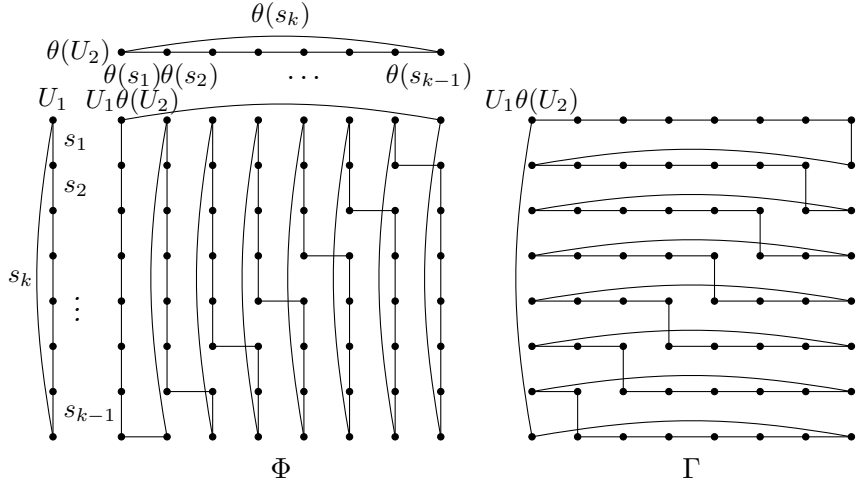
\end{Rm}

\begin{Rm}
Let $G,H$ be as in Lemma 2.3. If $H_i$ is a coset of  $H$ in the group $G,$ then $H_i \times H_i$ is a coset of $H\times H$ in $G \times G.$ By Lemma 2.3(3), $H_i=[2i-2]H.$ Hence $H_i \times H_i=[2i-2]\theta([2i-2])(H_1 \times H_1).$
\end{Rm}

\section{Proof of Basis Step}
In Theorem 3.1 of this section, we state the Basis Step for the proof of Main Theorem 1.6. The proof of Theorem 3.1 is crucial and has two parts. In this section, we prove the first part of the Theorem and the second part is proved in Section 5. 

We introduce the following notation.

\noindent \textbf{Notation:} We use the notation $\sqcup$ for the union of edge-disjoint cycles and $\uplus$ for the union of vertex-disjoint cycles.

\begin{theorem}{\textbf{Basis Step:}}
For $n = 2^m, m \geq 1, $ $Q_n$ can be decomposed into $2^mn$-cycles, (say) $C_1, C_2, \ldots, C_r,$ where $r = 2^{n-1-m}$ such that
\begin{itemize}
\item [(I)] every $C_i$ contains $2^m$ edges $e_{i_1}, e_{i_2}, \ldots, e_{i_{2^m}}$ such that  $C_i - \{e_{i_1}, e_{i_2}, \ldots, e_{i_{2^m}}\}$ has $2^m$ components each of which is a path of length $n-1$;
\item[(II)] $M=\{e_{i_j}\colon j = 1, 2, \ldots, 2^m; i = 1, 2, \ldots, r\}$ forms a perfect matching in $Q_n.$
\end{itemize}
\end{theorem}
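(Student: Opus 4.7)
The plan is to construct the $2^m n$-cycle decomposition by viewing $Q_n=Q_{n/2}\Box Q_{n/2}$ and building each $C_i$ as a hamiltonian cycle of a Cartesian product of two $n$-cycles coming from a Mollard--Ramras decomposition of $Q_{n/2}$. The case $m=1$ is immediate since $Q_2$ is itself a $4$-cycle, so assume $m\geq 2$. By Theorem 2.2, $Q_{n/2}$ decomposes into $\mathcal C=\{C(A,S):A\in G\}$, a family of $n$-cycles, with $S=(1,2,\dots,n/2,1,2,\dots,n/2)$. Applying Lemma 2.4 to $Q_{n/2}$ partitions $\mathcal C$ into $n/4$ sub-collections $\mathcal C_1,\dots,\mathcal C_{n/4}$, each consisting of mutually vertex-disjoint $n$-cycles whose union $H_i=\biguplus_{C\in\mathcal C_i}C$ is a spanning $2$-regular subgraph of $Q_{n/2}$.

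First I would invoke Lemma 2.9(1), with both factors equal to $Q_{n/2}$ and both decompositions equal to $H_1\sqcup\cdots\sqcup H_{n/4}$, to obtain $Q_n=\bigsqcup_{i=1}^{n/4}(H_i\Box H_i)$; by Lemma 2.9(2) the components of $H_i\Box H_i$ are precisely the products $C\Box C'$ with $C,C'\in\mathcal C_i$. Each such $C\Box C'$ is the Cartesian product of two $n$-cycles, so by Kotzig's Lemma 2.10 it decomposes into two hamiltonian cycles $\Phi$ and $\Gamma$, each of length $n^2=2^m n$. Collecting all these $\Phi,\Gamma$ across $i$ and across all ordered pairs $(C,C')\in\mathcal C_i\times\mathcal C_i$ gives the desired family $\{C_1,\dots,C_r\}$. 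With $|\mathcal C_i|=2^{n/2-m}$ (Lemma 2.3(1) applied in dimension $n/2$) and $n/4=2^{m-2}$, a routine count yields $r=(n/4)\cdot 2\cdot|\mathcal C_i|^2=2^{m-2}\cdot 2^{n-2m+1}=2^{n-1-m}$.

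The special edges for condition~(I) are read off directly from the explicit Kotzig construction recalled in Remark 2.11. In each cycle $\Phi$ with edge-direction sequence $\mathcal S$, exactly $n$ entries point in a ``second-factor'' direction $\theta(s_j)$, namely those at positions $n,2n,\dots,n^2$ of $\mathcal S$. I would declare these $n=2^m$ edges to be $e_{i_1},\dots,e_{i_{2^m}}$. The $n-1$ edges strictly between two consecutive special edges are all first-factor edges, and their directions form a cyclic shift of $(s_1,\dots,s_n)$; hence they trace a hamiltonian path of length $n-1$ in a single fiber $C\times\{v\}$. Thus removing the $n$ special edges from $\Phi$ splits it into $n$ paths of length $n-1$. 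The analogous choice for $\Gamma$ (using the $n$ positions whose direction comes from the first factor in the sequence $\theta(\mathcal S)$) works by symmetry. This verifies condition~(I).

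The hard part is condition~(II): once the special edges have been pinned down by this rigid recipe, one still must show that $M=\{e_{ij}\}$ covers every vertex of $Q_n$ exactly once. In concrete terms, two distinct special edges can coincide or share a vertex only if two cosets of $H\times H$ relate in a very constrained way, and ruling this out is where the fine algebraic properties of $H$ developed in Lemmas 2.5 and 2.7 -- no element of $H$ is a consecutive string, the coset structure $H_i=[2i-2]\Delta H$, and the rigidity statements about unions of two disjoint consecutive strings -- are indispensable. That verification is the genuinely delicate part of the proof and is deferred to Section~5.
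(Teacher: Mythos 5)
Your Step~1 coincides with the paper's construction: the same factorization $Q_n=Q_{n/2}\Box Q_{n/2}$, the same coset partition of the Mollard--Ramras $n$-cycles via $H$, the same Kotzig splitting of each $C(A,S)\Box C(B,S)$ into $\Phi^i_{AB}$ and $\Gamma^i_{AB}$, and your count $r=2^{n-1-m}$ is correct; moreover any choice of $n$ edges spaced $n$ apart along an $n^2$-cycle satisfies condition (I), so that part of your selection is fine. The gap is that your specific selection makes condition (II) \emph{false}, not merely unverified, so deferring to Section~5 cannot save it. You select the edges at positions $n,2n,\ldots,n^2$ in every $\Phi$ and at the same positions in every $\Gamma$. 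But by Remark~2.11 the two Kotzig cycles $\Phi^i_{AB}$ and $\Gamma^i_{AB}$ coming from the same product have the \emph{same} initial vertex $A\Delta\theta(B)$, and the edge at position $n^2$ is the closing edge of each cycle, hence incident to that initial vertex. The closing edge of $\Phi^i_{AB}$ has direction $\theta(n/2)$ while that of $\Gamma^i_{AB}$ has direction $n/2$, so these are two distinct selected edges sharing the vertex $A\Delta\theta(B)$, for every $i$ and every pair $A,B\in H_i$; thus your $M$ is never a matching when $m\geq 2$. One can see the collapse concretely for $n=4$: with $\mathcal S=(1,2,1,3,\,2,1,2,4,\,1,2,1,3,\,2,1,2,4)$, the four edges you select from $\Phi_{\emptyset\emptyset}$ and the four from $\Gamma_{\emptyset\emptyset}$ cover the \emph{same} eight vertices of $Q_4$ (their sixteenth edges are $(\{4\},\emptyset)$ and $(\{2\},\emptyset)$ respectively), so your $2^{n-1}$ edges cover only $2^{n-1}$ vertices instead of $2^n$.

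This is precisely why the paper's Step~2 staggers the selection: from the cycles of $F_i$ it takes the edges at positions congruent to $2i-1 \pmod n$, and from the cycles of $F_i'$ those at positions congruent to $n/2+2i-1 \pmod n$. The half-period shift by $n/2$ separates the $\Phi$-selection from the $\Gamma$-selection on their common vertex set (avoiding exactly the clash above), and the shift by $2$ per family index $i$ separates the selections across $F_1,\ldots,F_{n/4}$, which are all spanning subgraphs of $Q_n$ and would otherwise also collide. The entire Section~5 analysis (Lemmas 5.1--5.5, resting on the properties of $H$ in Lemmas 2.5 and 2.7) is tailored to that specific offset pattern and cannot be invoked for your uniform choice. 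To repair your proposal, replace the uniform positions $n,2n,\ldots,n^2$ by an $i$-dependent offset scheme of the paper's type; with that change the deferred verification of (II) in Section~5 applies as stated.
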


\begin{proof}
If $ m = 1,$ then $Q_n$ is a just a 4-cycle and the result holds obviously. Suppose $ m \geq 2.$  We prove the theorem in two steps. In the first step, we give a construction of the $2^mn$-cycles which decompose $Q_n.$ In the second step,  we give a method to select edges from these cycles to get a perfect matching in $Q_n$ satisfying (I) and (II).\\~\\
\textbf{Step 1. Construction of $2^mn$-cycles in $Q_n$:}

Consider $Q_n = Q_{n/2} \Box Q_{n/2}.$ Then $n/2=2^{m-1},$ $m-1 \geq 1.$ 

Let $G = \{A\colon A \subset [n/2-1],~|A|~even\}.$ Then $G$ is a group with respect to $\Delta$ with order $2^{n/2-2}=2^{2^{m-1}-2}.$  Let $H= <\mathsf K>,$ where $\mathsf K =\{\{j,2^{i-1}+j\}\colon j=1,2,\ldots,2^{i-1}-1, i=2,3,\ldots,m-1\}.$ Then by Lemma 2.3(2), $|H|=2^{n/2-m}$ and $H$ does not contain a consecutive string. By Lemma 2.3(3), $H=H_1$ and its cosets $H_i = [2i-2]H, i=2,3,\ldots,n/4$ partition the group $G.$

Consider the $n$-cycle $C=C(\emptyset, S)$ in $Q_{n/2},$ having initial vertex $\emptyset$ and edge direction sequence $S=(1,2,\ldots,n/2,1,2,\ldots,n/2).$ We can write $S=T^2,$ the concatenation of $T$ with itself, with $T=(1,2,\ldots,n/2).$ Recall the map $\sigma_A:V(Q_{n/2})\rightarrow V(Q_{n/2})$ defined by $\sigma_A(B)=A\Delta B=AB.$ Then, by Theorem 2.2,  $\mathcal C =\{\sigma_A(C)\colon A \in G\} = \{C(A,S) \colon A \in G\}$ is a collection of edge-disjoint $n$-cycles decomposing $Q_{n/2}.$ Hence $Q_{n/2}=\displaystyle \bigsqcup_{A \in G} C(A,S).$  Let  $\mathcal C_i=\{C(A,S)\colon A \in H_i\}$ for $ i=1,2,\ldots,n/4.$  Then $\mathcal C_1, \mathcal C_2, \ldots, \mathcal C_{n/4}$  partition $\mathcal C.$ By Lemma 2.4, the cycles belonging to each $\mathcal C_i$ are vertex-disjoint and their union $W_i=\displaystyle \biguplus_{A \in H_i} C(A,S) $ is  a $2$-regular spanning subgraph of $Q_{n/2}.$  Thus, \\
\begin{tabular}{lll}
$Q_{n/2}=\displaystyle \bigsqcup_{A \in G} C(A,S)$&$=$&$[\displaystyle \biguplus_{A \in H_1} C(A,S) ]\sqcup [\displaystyle \biguplus_{A \in H_2} C(A,S) ]\sqcup\ldots\sqcup[\displaystyle \biguplus_{A \in H_{n/4}} C(A,S)]$\\
&$=$&$W_1\sqcup W_2 \sqcup \ldots \sqcup W_{n/4}$
\end{tabular}\\
Now $$Q_n = Q_{n/2} \Box Q_{n/2} = (W_1 \sqcup W_2 \sqcup \ldots \sqcup W_{n/4}) \Box (W_1 \sqcup W_2 \sqcup \ldots \sqcup W_{n/4})$$ $$=\textrm{(by Lemma 2.8(1))} ~~(W_1 \Box W_1) \sqcup (W_2 \Box W_2) \sqcup \ldots \sqcup (W_{n/4} \Box W_{n/4})$$\\
By Lemma 2.8(2), for $ i = 1, 2, \ldots, n/2,$  $$W_i \Box W_i = (\displaystyle \biguplus_{A \in H_i} C(A,S))  \Box (\displaystyle \biguplus_{B \in H_i} C(B,S)) = \displaystyle \biguplus_{A,B \in H_i} (C(A,S) \Box C(B,S)).$$ By Lemma 2.9 and Remark 2.11, $C(A,S) \Box C(B,S),$ where $A,B \in H_i,$ is decomposed into two cycles $\Phi^i_{AB}=\Phi^i(A\Delta~\theta(B),\mathcal S)$ and $\Gamma^i_{AB} =\Gamma^i(A\Delta~\theta(B), \theta(\mathcal S))$ of $Q_n,$ each of length $n^2.$ Moreover,  $\Phi^i_{AB}$ and $\Gamma^i_{AB}$ have the same initial vertex $A\Delta~\theta(B).$ The edge-direction sequences of $\Phi^i_{AB}$ and $\Gamma^i_{AB}$ are $\mathcal S$ and $\theta(\mathcal S)$ respectively, where $\mathcal S = \tau^2,$ the concatenation of $\tau$ with itself, with 
\begin{center}
\begin{tabular}{rcl}
$\tau$&$=$&$(1, 2, 3, 4, \ldots, n/2, 1, 2, \ldots, n/2-1, \theta(1),$\\
&&$n/2, 1, 2,3, \ldots, n/2, 1, 2, \ldots, n/2-2, \theta(2),$\\
&&$n/2-1, n/2, 1, 2, \ldots, n/2, 1, 2, \ldots, n/2-3, \theta(3),$\\
&&\hspace{3cm}\vdots\\
&&$2,3,4,5, \ldots, n/2, 1, 2, \ldots, n/2, \theta(n/2)).$\\

\end{tabular}
\end{center}
Hence
\begin{center}
\begin{tabular}{ccl}
$W_i\Box W_i$ & $=$ & $\displaystyle \biguplus_{A,B \in H_i} (~~C(A,S)~ \Box~ C(B,S)~~)$\\ \\
&$=$&$\displaystyle \biguplus_{A,B \in H_i} (~~\Phi^i(A\Delta~\theta(B),\mathcal S) ~\sqcup ~\Gamma^i(A\Delta~\theta(B), \theta(\mathcal S)~~)$ \\ \\
&$=$&$[\displaystyle \biguplus_{A,B \in H_i} \Phi^i_{AB}~~]~\sqcup~ [\displaystyle \biguplus_{A,B \in H_i} \Gamma^i_{AB}~~]$ \\ \\
&$=$&$F_i \sqcup F_i',$
\end{tabular}
\end{center} 
where $F_i=\displaystyle \biguplus_{A,B \in H_i} \Phi^i_{AB}$ and $F_i'=\displaystyle \biguplus_{A,B \in H_i} \Gamma^i_{AB}.$ Therefore $F_i$ and $F_i'$ are spanning, $2$-regular subgraphs of $Q_n.$
Now  $|H_i|=2^{n/2-m}.$ Hence the number of cycles in each $F_i$ and $F_i'$ is $2^{n/2-m}\times 2^{n/2-m}=2^{n-2m}.$ This implies that both $2$-regular subgraphs $F_i$ and $F_i'$ are spanning subgraphs of $Q_n$ as $n^2\times 2^{n-2m}=(2^m)^2\times 2^{n-2m}=2^n=|V(Q_n)|$.\\
 So $$Q_n = Q_{n/2} \Box Q_{n/2} = (W_1 \Box W_1) \sqcup (W_2 \Box W_2) \sqcup \ldots \sqcup (W_{n/4} \Box W_{n/4})$$ $$= \{\Phi^i_{AB}\colon A,~B ~\in~ H_i;~ 1 \leq i \leq n/4\}~\sqcup~\{\Gamma^i_{AB}\colon A,~B ~\in~ H_i;~ 1 \leq i \leq n/4\}.$$
Therefore $Q_n$ is decomposed into $2^mn$-cycles, when $n=2^m.$ (See Appendix for illustration of $Q_8.$)
\vskip .2cm  

\textbf{Step 2. Selection of the edges from the cycles to satisfy (I) and (II): }

Note that for fixed $i,$ the $2^mn$ cycles in $F_i$ are all vertex-disjoint. So, whatever edges we select from one cycle of $F_i,$ the corresponding edges can be selected from the other cycles of $F_i.$ Also, the $2^m$ edges to be selected from each of the cycles should be equidistant in it by condition (I). So, it is enough to select a single edge from any one cycle of $F_i,$ and the remaining edges will be selected automatically. The case for $F_i'$ is similar. Also note that all the cycles in $F_i$ and $F_i'$ have edge-direction sequence $\mathcal S$ and $\theta(\mathcal S),$ respectively. Therefore while selecting the edges from $F_i,$ $\mathcal S$ will be under consideration. Similarly, while selecting the edges from $F_i',$ $\theta(\mathcal S)$ will be considered.

We will select the $n$ edges from each cycle of $F_i$ satisfying condition (I) and denote the collection of these $(2^n / n^2) \times n = 2^n/n$ edges by $M_i$. Similarly, the collection of edges to be selected from the cycles in $F_i'$ will be denoted by $M_i'.$ We begin with $F_1.$

\textbf{Selection of the edges from the cycles of $F_1$:} Consider the cycle $\Phi^1_{\emptyset\emptyset}=\Phi(\emptyset,\mathcal S)$ in $F_1.$ We need to select $2^m=n$ edges from this cycle to get the matching $M_1.$  We start with the first edge viz., $e^1_1= (\emptyset,\{1\}).$ By condition (I), the next edge of $M_1$ should be the $(n+1)^{st}$ edge of the cycle. This edge  is given by $e^1_2=(\{n/2,\theta(1)\},\{\theta(1)\}).$ (Recall that $\theta(i)=n/2+i,$ for $i=1,2,\ldots,n/2.$ ) The third edge contributing to $M_1$ is the $(2n+1)^{st}$ edge of the cycle given by $e^1_3=(\{n/2-1,n/2,\theta(1),\theta(2)\}, \{n/2,\theta(1),\theta(2)\}).$ The remaining edges of $M_1$ are selected similarly from the cycle.  Therefore, the $n$ edges contributing to $M_1$ are as follows.\\ \\
\begin{tabular}{lll}
$e^1_1$&$=$&$(\emptyset,\{1\})$\\
$e^1_2$&$ =$&$ (\{n/2,\theta(1)\},\{\theta(1)\}),$\\
$e^1_3$&$ =$&$ (\{n/2-1,n/2,\theta(1),\theta(2)\}, \{n/2,\theta(1),\theta(2)\}),$ \\
$e^1_4$&$ = $&$(\{n/2-2,n/2-1,n/2,\theta(1),\theta(2),\theta(3)\}, \{n/2-1,n/2,\theta(1),\theta(2),\theta(3)\}),$\\
 &\vdots&\\
$e^1_{n/2-1}$&$=$&$(\{3,4,\ldots,n/2,\theta(1),\theta(2),\ldots,\theta(n/2-2)\},\{4,5,\ldots,n/2,\theta(1),\theta(2),\ldots,\theta(n/2-2)$, \\
$e^1_{n/2}$&$ =$&$ (\{2,3,\ldots,n/2,\theta(1),\theta(2),\ldots,\theta(n/2-1)\},\{3,4,\ldots,n/2,\theta(1),\theta(2),\ldots,\theta(n/2-1)\})$,\\ 
$e^1_{n/2+1}$&$ =$&$ (\{1,2,\ldots,n/2,\theta(1),\theta(2),\ldots,\theta(n/2)\},\{2,\ldots,n/2,\theta(1),\theta(2),\ldots,\theta(n/2)\}),$\\
 $e^1_{n/2+2}$&$ =$&$ (\{1,2,\ldots,n/2-1,\theta(2),\ldots,\theta(n/2)\},\{1,2,\ldots,n/2,\theta(2),\ldots,\theta(n/2)\}),$\\
& \vdots&\\
$e^1_{n-1}$&$ =$&$ (\{1,2,\theta(n/2-1),\theta(n/2)\},\{1,2,3,\theta(n/2-1),\theta(n/2)\}),$\\
 $e^1_n$&$ =$&$ (\{1,\theta(n/2)\},\{1,2,\theta(n/2)\}).$
\end{tabular}\\ \\
Observe that the end vertices of the above edges are essentially represented with the help of symbols in $[n/2]=\{1,2,\ldots,n/2\}$ only.
 In fact, one can write the vertices in a particular pattern, which is simpler. For that purpose, we modify the usual notation $[i]$ to $\langle i \rangle,$ and define it as follows.
\begin{Not}
\begin{enumerate}
\item [(i)] $\langle 0\rangle=\emptyset.$
\item [(ii)]For $1\leq i \leq n/2,$  $\langle i\rangle =\{1,2,\ldots,i\}=[i]$ (in usual sense)
\item [(iii)]For $1\leq i \leq n/2,$  $\langle n/2+i\rangle =\{i+1,i+2,\ldots,n/2\}=\langle n/2 \rangle\Delta\langle i \rangle.$
\item [(iv)]For $X \subseteq \{1,2,\ldots,n/2\},$ $\overline{X}=\langle n/2 \rangle\Delta X,$ the \emph{complement} of $X$ in $\langle n/2 \rangle.$ 
\item [(v)]For $A=\{a_1,a_2,\ldots,a_r\}\subseteq \langle n/2 \rangle,$ $\theta (A)=\{\theta(a_1),\theta(a_2),\ldots,\theta(a_r)\},$\\ and $\theta( \emptyset)= \theta(\langle 0\rangle )=  \emptyset.$ Also, $\overline{\theta(A)}=\theta(\overline{A}).$
\end{enumerate} 
\end{Not}
Observe that, $\langle n/2+i\rangle=\overline{\langle i\rangle}$ and $\overline{\langle n/2+i\rangle}=\overline{\overline{\langle i\rangle}}=\langle i\rangle,$ for $1\leq i \leq n/2.$ 
\vskip .2 cm 
\noindent
\textbf{From here on, we use the above notations throughout.}\\ \\
By this notation, we can write the above-mentioned edges in a compact form as follows. \\
\small{
\begin{tabular}{l|l}
$e^1_1=(~\overline{\langle n/2\rangle}\Delta~\theta(~\langle 0\rangle ~),\overline{\langle n/2+1\rangle}\Delta~\theta(~\langle 0\rangle ~)~),$ & $e^1_{n/2+1}=(~\langle n/2\rangle \Delta~\theta(~\overline{\langle 0\rangle }~),\langle n/2+1\rangle \Delta~\theta(~\overline{\langle 0\rangle}~)~),$\\
$e^1_2=(~\overline{\langle n/2-1\rangle}\Delta~\theta(~\langle 1\rangle ~),\overline{\langle n/2\rangle}\Delta~\theta(~\langle 1\rangle ~)~),$ & $e^1_{n/2+2}=(~\langle n/2-1\rangle\Delta~\theta(~{\overline{\langle 1\rangle}}~),\langle n/2\rangle \Delta~\theta(~\overline{\langle 1\rangle}~)~),$\\
$e^1_3=(~\overline{\langle n/2-2\rangle}\Delta~\theta(~\langle 2\rangle ~),\overline{\langle n/2-1\rangle}\Delta~\theta(~\langle 2\rangle ~)~),$ & $e^1_{n/2+3}=(~\langle n/2-2\rangle\Delta~\theta(~\overline{\langle 2\rangle }~),\langle n/2-1\rangle \Delta~\theta(~\overline{\langle 2\rangle}~)~),$\\
\hskip 2cm \vdots & \hskip 2cm \vdots\\
$e^1_{n/2-1}=(~\overline{\langle 2\rangle}\Delta~\theta(~\langle n/2-2\rangle ~),{\overline{\langle 3\rangle}}\Delta~\theta(~\langle n/2-2\rangle ~)~),$ & $e^1_{n-1}=(~\langle 2\rangle\Delta~\theta(~\overline{\langle n/2-2\rangle }~),\langle 3\rangle \Delta~\theta(~\overline{\langle n/2-2\rangle}~)~),$\\
$e^1_{n/2}=(~\overline{\langle 1\rangle}\Delta~\theta(~\langle n/2-1\rangle ~),\overline{\langle 2\rangle}\Delta~\theta(~\langle n/2-1\rangle ~)~),$& $e^1_{n}=(~\langle 1\rangle\Delta~\theta(~\overline{\langle n/2-1\rangle }~),\langle 2\rangle \Delta~\theta(~\overline{\langle n/2-1\rangle}~)~),$
\end{tabular}}
\vskip 0.2cm
\normalsize
Note that,  end vertices of the edges in the left column are complements of end vertices of the edges in right column. In fact,these edges can be rewritten in more compact form as follows.  \\
\begin{tabular}{lll}
$e^1_r$&$=$&$ (~\overline{\langle n/2-r+1\rangle} \Delta~\theta(~\langle r-1\rangle ~), \overline{\langle n/2-r+2\rangle }\Delta~\theta(~\langle r-1\rangle ~)~), $\\
$e^1_{n/2+r}$&$=$&$ (~\langle n/2-r+1\rangle\Delta~\theta(~\overline{\langle r-1\rangle }~), \langle n/2-r+2\rangle \Delta~\theta(~\overline{\langle r-1\rangle }~)~),$ where $r=1,2,\ldots,n/2.$
\end{tabular}
\vskip 0.2cm
Note that these are the edges from the cycle $\Phi(~\emptyset,\mathcal S~),$ contributing to the matching  in $F_1,$\\  

For any cycle $\Phi(~A\Delta~\theta(~B~),\mathcal S~),$ different from $\Phi(~\emptyset,\mathcal S~)$ in $F_1,$ where $A,B\in H,$ we select the $n$ edges corresponding to the $n$ edges of $\Phi(~\emptyset,\mathcal S~)$ as follows.\\
\begin{tabular}{lll}
$e^1_{r_{A\theta(~B~)}}$&$=$&$ (~A\Delta\overline{\langle n/2-r+1\rangle }\Delta~\theta(~\langle r-1\rangle\Delta B~), A\Delta\overline{\langle n/2-r+2\rangle }\Delta~\theta(~\langle r-1\rangle \Delta B~)~), $\\
$e^1_{n/2+r_{A\theta(~B~)}}$&$=$&$ (~A\Delta\langle n/2-r+1\rangle \Delta~\theta(~\overline{\langle r-1\rangle}\Delta B~), A\Delta\langle n/2-r+2\rangle \Delta~\theta(~\overline{\langle r-1\rangle}\Delta B~)~),$ \\
&&where $r=1,2,\ldots,n/2.$
\end{tabular}
\vskip 0.2cm
Thus we have selected the edges from all the cycles in $F_1$ and they are vertex-disjoint, as the cycles are vertex-disjoint.\\ \\
\textbf{Selection of the edges from the cycles in $F_i$:}
While selecting the edges from the cycles in $F_1,$ we had started with the first edge of every cycle. For $F_2,$ we started the selection with the third edge and in general for $F_i,$ we start with $(~2i-1~)^{st}$ edge of every cycle in $F_i.$ Therefore the edges selected from the first cycle $\Phi(~\langle 2i-2\rangle \Delta~\theta\langle 2i-2\rangle ,\mathcal S~)$ of $F_i$ are as follows.\\ \\
\begin{tabular}{lll}
$e^i_r$&$=$&$ (~\langle 2i-2\rangle \Delta\overline{\langle n/2-r+2i-1\rangle}\Delta~\theta(~\langle r-1\rangle \Delta \langle 2i-2\rangle ~),$\\ &&$\langle 2i-2\rangle \Delta\overline{\langle n/2-r+2i\rangle }\Delta~\theta(~\langle r-1\rangle \Delta\langle 2i-2\rangle ~)~), $\\
$e^i_{n/2+r}$&$=$&$ (~\langle 2i-2\rangle\Delta \langle n/2-r+2i-1\rangle \Delta~\theta(~\overline{\langle r-1\rangle }\Delta\langle 2i-2\rangle ~),$\\ &&$ \langle 2i-2\rangle\Delta\langle n/2-r+2i\rangle \Delta~\theta(~\overline{\langle r-1\rangle}\Delta\langle 2i-2\rangle ~)~),$
\end{tabular}\\ where $r=2i-1,2i,\ldots,n/2+2i-2.$\\ \\
The corresponding edges from the arbitrary cycle $\Phi(~A\Delta\langle 2i-2\rangle \Delta~\theta\langle 2i-2\rangle \Delta~\theta(~B~)~)$ of $F_i$ are as follows.\\ \\
\begin{tabular}{lll}
$e^i_{r_{A\theta(~B~)}}$&$=$&$ (~A\Delta\langle 2i-2\rangle\Delta \overline{\langle n/2-r+2i-1\rangle}\Delta~\theta(~\langle r-1\rangle\Delta \langle 2i-2\rangle\Delta B~),$\\
&&$A\Delta\langle 2i-2\rangle\Delta \overline{\langle n/2-r+2i\rangle}\Delta~\theta(~\langle r-1\rangle\Delta \langle 2i-2\rangle\Delta B~)~), $\\
$e^i_{n/2+r_{A\theta(~B~)}}$&$=$&$(~ A\Delta\langle 2i-2\rangle \Delta\langle n/2-r+2i-1\rangle \Delta~\theta(~\overline{\langle r-1\rangle }\Delta\langle 2i-2\rangle\Delta B~),$\\
&&$ A\Delta\langle 2i-2\rangle \Delta\langle n/2-r+2i\rangle \Delta~\theta(~\overline{\langle r-1\rangle }\Delta\langle 2i-2\rangle \Delta B~)~),$
\end{tabular}\\ where $r=2i-1,2i,\ldots,n/2+2i-2, A,B \in H.$
\vskip 0.2cm
Let $M_i=\{e^i_{r_{A\theta(~B~)}}, e^i_{n/2+r_{A\theta(~B~)}}\colon A,B \in H, r=2i-1,2i,\ldots,n/2+2i-2\}$ for $i=1,2,\ldots,n/4.$  Then each $M_i$ is a matching, as cycles in $F_i$ are vertex-disjoint. We take the union of all these matchings, and call it $M.$ Therefore $ M=M_1\cup M_2\cup\ldots \cup M_{n/4}.$ 
\vskip 0.2cm
\textbf{Selection of the edges from the cycles in $F_i'$:} From any cycle of $F_i,$ we had selected $(~2i-1~)^{st},$ $(~n+2i-1~)^{st},$ $(~2n+2i-1~)^{st},$\ldots,$(~n^2-n+2i-1~)^{st}$ edges, contributing to the matching $M_i.$ Now from any cycle of $F_i',$ we select $(~n/2+2i-1~)^{st}$, $(~n/2+n+2i-1~)^{st}$,$(~n/2+2n+2i-1~)^{st}$,\ldots,$(~n/2+n^2-n+2i-1~)^{st}$ edges to contribute in the matching $M_i',$ satisfying the condition (~I~). Therefore the edges selected from the cycle $\Gamma^i_{A\theta(~B~)}$ in $F_i'$ are as follows.\\
\begin{tabular}{lll}
$f^i_{r_{A\theta(~B~)}}$&$=$&$ (~A\Delta\langle 2i-2\rangle \Delta\langle r-1\rangle \Delta~\theta(~\langle 2i-2\rangle\Delta \langle n/2-r+2i-1\rangle\Delta B~),$\\&&$A\Delta\langle 2i-2\rangle \Delta\langle r-1\rangle\Delta~\theta(~\langle 2i-2\rangle\Delta \langle n/2-r+2i\rangle \Delta B~)~), $\\
$f^i_{n/2+r_{A\theta(~B~)}}$&$=$&$ (~A\Delta\langle 2i-2\rangle \Delta\overline{\langle r-1\rangle }\Delta~\theta(~\langle 2i-2\rangle \Delta\overline{\langle n/2-r+2i-1\rangle }\Delta B~),$\\&&$A\Delta\langle 2i-2\rangle \Delta\overline{\langle r-1\rangle }\Delta~\theta(~\langle 2i-2\rangle\Delta \overline{\langle n/2-r+2i\rangle }\Delta B~)~),$\\&&where $r=2i-1,2i,\ldots,n/2+2i-2.$
\end{tabular}\\
Let $M_i'=\{f^i_{r_{A\theta(~B~)}}, f^i_{n/2+r_{A\theta(~B~)}}\colon A,B \in H, r=2i-1,2i,\ldots,n/2+2i-2\}$ for $i=1,2,\ldots,n/4.$ Then $M_i'$ is a matching in $F_i'.$ Again, we take the union of all these matchings, and call it $M'.$ Therefore $ M'=M_1'\cup M_2'\cup\ldots \cup M_{n/4}'.$ 
\vskip 0.2cm
Note that both $M$ and $M'$ satisfy condition (~I~) by the selection of the edges. Let $\mathcal M=M \cup  M'.$ Then $\mathcal M$ is the collection of the edges selected from $n^2$-cycles in the decomposition of $Q_n,$ satisfying (~I~). It remains to prove that the edges in $\mathcal M$  satisfy also condition (~II~) of the Theorem. \textbf{We prove this in Section 5.} (See Appendix for Illustration.)
\end{proof}

\section{Proof of Main Theorem}
The next theorem serves as the induction step for the proof of Main Theorem 1.6
\begin{theorem}
Let $n \geq 6$ be even.  Suppose $Q_{n-2}$ can be
decomposed into
 $r(~n-2)$-cycles $C_1,C_2,\ldots,C_s$ where $s = 2^{n-m-3}$ such that each  $C_i$ contains $r$ edges
$e_{i1},e_{i2}, \ldots e_{ir}$ whose removal from $C_i$ leaves $r$
vertex-disjoint paths of length $n-3$ and further, the set
$\{e_{ij} \colon  j = 1,2,\ldots,r; ~ i = 1,2, \ldots, s\}$ forms a
perfect matching in $Q_{n-2}.$ Then $Q_n$ can be decomposed into
$rn$-cycles $Z_1,Z_2,\ldots,Z_{4s}$ such that each $Z_i$ contains
$r$ edges  $f_{i1},f_{i2}, \ldots f_{ir}$ whose removal from
$Z_i$ leaves $r$ vertex-disjoint paths of length $n-1$ and further,
the set $\{f_{ij} \colon  j = 1,2,\ldots,r; ~ i = 1,2, \ldots, 4s\}$
forms a perfect matching in $Q_{n}.$
\end{theorem}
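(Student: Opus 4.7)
The plan is to view $Q_n = Q_{n-2}\,\Box\, Q_2,$ so that $Q_n$ consists of four copies $Q_{n-2}^{ab}$ of $Q_{n-2}$ indexed by $ab \in Q_2 = \{00, 01, 10, 11\},$ joined by cross edges in the two new coordinates. Applying the hypothesis in each copy lifts every cycle $C_i$ to four isomorphic cycles $C_i^{ab},$ each with its own marked edges $e_{ij}^{ab} = (u_{ij}^{ab}, v_{ij}^{ab}).$ Fix the Hamilton cycle $00 \to 01 \to 11 \to 10 \to 00$ of $Q_2$ and write $k \mapsto k'$ for its successor function. For each $i$ and each $k \in Q_2,$ I will construct a new cycle $Z_{i,k}$ of $Q_n$ of length $rn,$ giving $4s$ cycles in total.

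I construct $Z_{i,k}$ by starting with $C_i^k$ and, at each marked edge $e_{ij}^{k},$ replacing $e_{ij}^k$ with the length-$3$ detour
\[
u_{ij}^{k} \to u_{ij}^{k'} \to v_{ij}^{k'} \to v_{ij}^{k}.
\]
Each substitution trades one edge for three, so $Z_{i,k}$ has $r(n-2) + 2r = rn$ edges and visits the $r(n-2)$ vertices of $C_i^k$ together with the $2r$ detour vertices $u_{ij}^{k'}, v_{ij}^{k'}$ in copy $k'$; these vertices are all distinct because the marked-edge endpoints in $Q_{n-2}$ are distinct (they form a matching). Since each detour reconnects the endpoints of the edge it replaces, $Z_{i,k}$ is a single simple cycle. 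I then declare the marked edges of $Z_{i,k}$ to be $f_{ij} := e_{ij}^{k'},$ the middle edge of each detour. Removing them breaks $Z_{i,k}$ into $r$ arcs of the form $v_{i,j-1}^{k'} - v_{i,j-1}^{k} - P_{ij}^{k} - u_{ij}^{k} - u_{ij}^{k'},$ each of length $1 + (n-3) + 1 = n-1,$ giving condition (I).

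Edge-disjointness is checked in two stages. For fixed $i,$ the four cycles $\{Z_{i,k}\}_{k \in Q_2}$ use the four edge-disjoint copies of $C_i$ (each minus its marked edges), together with the $4r$ middle edges $e_{ij}^{k'}$ and the $8r$ cross edges incident to the marked-edge endpoints; because $k \mapsto k'$ is a bijection on $Q_2,$ every lift $e_{ij}^{ab}$ is used exactly once, and the ordered pairs $(k,k')$ range over the four directed edges of the Hamilton cycle, covering each undirected edge of every relevant $Q_2$ exactly once. Across different $i$'s, the perfect-matching hypothesis on the marked edges of $Q_{n-2}$ guarantees that every cross edge of $Q_n$ is attached to the endpoints of a unique $e_{ij},$ so the $Q_3$-prisms spanned at distinct marked edges are edge-disjoint. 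The count $4s \cdot rn = 4 \cdot 2^{n-m-3} \cdot 2^m n = n \cdot 2^{n-1} = |E(Q_n)|$ then confirms that the $Z_{i,k}$ decompose $Q_n.$

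For condition (II), the marked edges of $\{Z_{i,k}\}_{k}$ comprise, for each $i,$ the entire collection $\{e_{ij}^{ab}\colon ab \in Q_2,\ j = 1, \dots, r\},$ i.e., all four lifts of every marked edge of $C_i.$ Taking the union over $i$ gives all four lifts of every marked edge of $Q_{n-2};$ since the latter form a perfect matching of $Q_{n-2},$ every vertex $w^{ab}$ of $Q_n$ is covered exactly once (by the lift in $Q_{n-2}^{ab}$ of the unique marked edge containing $w$), and (II) follows. The main subtlety I expect lies in verifying that the $r$ simultaneous detours inside $C_i^k$ do not fragment $Z_{i,k}$ into smaller cycles, but this is secured by the observation that each detour restores the connection between the endpoints of the edge it replaces, so the cyclic order on $C_i^k$ is transferred intact to $Z_{i,k}.$
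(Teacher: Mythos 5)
Your proposal is correct and is essentially the paper's own proof: your detour $u_{ij}^{k}\to u_{ij}^{k'}\to v_{ij}^{k'}\to v_{ij}^{k}$ along the Hamilton cycle of $Q_2$ is exactly the paper's construction $C_{ip}=(C_i^p-\{e_{ij}^p\})\cup\{e_{ij}^q\}\cup\{(v_{ij'}^p,v_{ij'}^q),(v_{ij''}^p,v_{ij''}^q)\}$ with $q\equiv p+1 \pmod 4$, including the same choice of marked edges $e_{ij}^{k'}$, the same length-$(n-1)$ arcs $v^{k'}-v^{k}-P-u^{k}-u^{k'}$, and the same edge count $4s\cdot rn=n2^{n-1}$ and perfect-matching argument via the four vertex-disjoint lifts. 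No gaps; nothing further is needed.
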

\begin{proof}
 Write $Q_n$ as $Q_n=Q_{n-2} \Box ~ Q_{2}.$ One can say that $Q_n$ is obtained from four vertex-disjoint copies $Q_{n-2}^{(~1)}, Q_{n-2}^{(~2)}, Q_{n-2}^{(~3)}, Q_{n-2}^{(~4)}$ of $Q_{n-2}$ such that $Q_{n-2}^p$ is connected to  $Q_{n-2}^{p+1(~mod~ 4)}$ by a matching between their corresponding vertices for $p = 1, 2, 3, 4$ (~See Figure 2.). For $p\in \{1, 2, 3, 4\},$ let $\{C^p_1, C^p_2, \ldots, C^p_s \}$
 be a decomposition of $Q_{n-2}^p$ into $r(~n-2)$-cycles such that
 every $ C_i^p$ contains $r$ edges $ e_{i1}^p, e_{i2}^p, \ldots,
 e_{ir}^p$ whose removal from
  $ C_i^p$ leaves $r$ paths of length $n-3$ each and further, the set $\{ e_{i1}^p, e_{i2}^p, \ldots, e_{ir}^p \colon i=1, 2, \ldots, s\} $ forms
  a perfect matching in $Q_{n-2}^p.$ Then  $s = 2^{n-3}/r.$  
 \begin{figure}[h]
     \begin{tikzpicture} [scale=0.8]
     \draw[fill=black](2,2) circle(.05); 
     \draw[fill=black](2,2.5) circle(.05);
     \draw[fill=black](2,3) circle(.05);
     \draw[fill=black](2,4) circle(.05);
     \draw[fill=black](2,4.5) circle(.05);
     \draw[fill=black](2,5) circle(.05);
     \draw[fill=black](2,5.5) circle(.05);
     \draw[fill=black](2,6.5) circle(.05);
     \draw[fill=black](2,8) circle(.05);
     \draw[fill=black](2,8.5) circle(.05);
     \draw[fill=black](2,9) circle(.05);
      \draw[fill=black](2,9.5) circle(.05);
     \draw[fill=black](2,10.5) circle(.05);
 
 	\draw[fill=black](6,2) circle(.05); 
     \draw[fill=black](6,2.5) circle(.05);
     \draw[fill=black](6,3) circle(.05);
     \draw[fill=black](6,4) circle(.05);
     \draw[fill=black](6,4.5) circle(.05);
     \draw[fill=black](6,5) circle(.05);
     \draw[fill=black](6,5.5) circle(.05);
     \draw[fill=black](6,6.5) circle(.05);
     \draw[fill=black](6,8) circle(.05);
     \draw[fill=black](6,8.5) circle(.05);
     \draw[fill=black](6,9) circle(.05);
      \draw[fill=black](6,9.5) circle(.05);
     \draw[fill=black](6,10.5) circle(.05);
     
    		 \draw[fill=black](10,2) circle(.05); 
         \draw[fill=black](10,2.5) circle(.05);
         \draw[fill=black](10,3) circle(.05);
         \draw[fill=black](10,4) circle(.05);
         \draw[fill=black](10,4.5) circle(.05);
         \draw[fill=black](10,5) circle(.05);
         \draw[fill=black](10,5.5) circle(.05);
         \draw[fill=black](10,6.5) circle(.05);
         \draw[fill=black](10,8) circle(.05);
         \draw[fill=black](10,8.5) circle(.05);
         \draw[fill=black](10,9) circle(.05);
          \draw[fill=black](10,9.5) circle(.05);
         \draw[fill=black](10,10.5) circle(.05);
         
         \draw[fill=black](14,2) circle(.05); 
            \draw[fill=black](14,2.5) circle(.05);
            \draw[fill=black](14,3) circle(.05);
             \draw[fill=black](14,4) circle(.05);
             \draw[fill=black](14,4.5) circle(.05);
             \draw[fill=black](14,5) circle(.05);
             \draw[fill=black](14,5.5) circle(.05);
             \draw[fill=black](14,6.5) circle(.05);
             \draw[fill=black](14,8) circle(.05);
             \draw[fill=black](14,8.5) circle(.05);
             \draw[fill=black](14,9) circle(.05);
              \draw[fill=black](14,9.5) circle(.05);
             \draw[fill=black](14,10.5) circle(.05);

 \draw[loosely dotted,  thin] (2,2)--(6,2)--(10,2)--(14,2);
 \draw[loosely dotted,  thin] (2,2.5)--(6,2.5)--(10,2.5)--(14,2.5);
 \draw[loosely dotted,  thin] (2,3)--(6,3)--(10,3)--(14,3);
 \draw[loosely dotted,  thin] (2,4)--(6,4)--(10,4)--(14,4);
 \draw[loosely dotted,  thin] (2,4.5)--(6,4.5)--(10,4.5)--(14,4.5);
 \draw[loosely dotted,  thin] (2,5)--(6,5)--(10,5)--(14,5);
 \draw[loosely dotted,  thin] (2,5.5)--(6,5.5)--(10,5.5)--(14,5.5);
 \draw[loosely dotted,  thin] (2,6.5)--(6,6.5)--(10,6.5)--(14,6.5);
 \draw[loosely dotted,  thin] (2,8)--(6,8)--(10,8)--(14,8);
 \draw[loosely dotted,  thin] (2,8.5)--(6,8.5)--(10,8.5)--(14,8.5);
 \draw[loosely dotted,  thin] (2,9)--(6,9)--(10,9)--(14,9);
 \draw[loosely dotted,  thin] (2,9.5)--(6,9.5)--(10,9.5)--(14,9.5);
 \draw[loosely dotted,  thin] (2,10.5)--(6,10.5)--(10,10.5)--(14,10.5);
 
 \draw[loosely dotted,  thin] (2,2) to[out=4, in=176] (14,2);
 \draw[loosely dotted,  thin] (2,2.5) to[out=4, in=176] (14,2.5);
 \draw[loosely dotted,  thin] (2,3) to[out=4, in=176] (14,3);
 \draw[loosely dotted,  thin] (2,4) to[out=4, in=176] (14,4);
 \draw[loosely dotted,  thin] (2,4.5) to[out=4, in=176] (14,4.5);
 \draw[loosely dotted,  thin] (2,5) to[out=4, in=176] (14,5);
 \draw[loosely dotted,  thin] (2,5.5) to[out=4, in=176] (14,5.5);
 \draw[loosely dotted,  thin] (2,6.5) to[out=4, in=176] (14,6.5);
 \draw[loosely dotted,  thin] (2,8) to[out=4, in=176] (14,8);
 \draw[loosely dotted,  thin] (2,8.5) to[out=4, in=176] (14,8.5);
 \draw[loosely dotted,  thin] (2,9) to[out=4, in=176] (14,9);
 \draw[loosely dotted,  thin] (2,9.5) to[out=4, in=176] (14,9.5);
 \draw[loosely dotted,  thin] (2,10.5) to[out=4, in=176] (14,10.5);
 
 \draw [thin] (2,2)--(2,2.5)--(2,3);
 \draw [loosely dashed] (2,3)--(2,4);
 \draw [thin] (2,4)--(2,4.5)--(2,5)--(2,5.5);
 \draw [loosely dashed] (2,5.5)--(2,6.5)--(2,8);
 \draw [thin] (2,8)--(2,8.5)--(2,9)--(2,9.5);
 \draw [loosely dashed] (2,9.5)--(2,10.5);
 \draw [thin] (2,2) to[out=100, in=260] (2,10.5);
 
 \draw [thin] (6,2)--(6,2.5)--(6,3);
 \draw [loosely dashed] (6,3)--(6,4);
 \draw [thin] (6,4)--(6,4.5)--(6,5)--(6,5.5);
 \draw [loosely dashed] (6,5.5)--(6,6.5)--(6,8);
 \draw [thin] (6,8)--(6,8.5)--(6,9)--(6,9.5);
 \draw [loosely dashed] (6,9.5)--(6,10.5);
 \draw [thin] (6,2) to[out=100, in=260] (6,10.5);
 
 \draw [thin] (10,2)--(10,2.5)--(10,3);
 \draw [loosely dashed] (10,3)--(10,4);
 \draw [thin] (10,4)--(10,4.5)--(10,5)--(10,5.5);
 \draw [loosely dashed] (10,5.5)--(10,6.5)--(10,8);
 \draw [thin] (10,8)--(10,8.5)--(10,9)--(10,9.5);
 \draw [loosely dashed] (10,9.5)--(10,10.5);
 \draw [thin] (10,2) to[out=100, in=260] (10,10.5);
 
 \draw [thin] (14,2)--(14,2.5)--(14,3);
 \draw [loosely dashed] (14,3)--(14,4);
 \draw [thin] (14,4)--(14,4.5)--(14,5)--(14,5.5);
 \draw [loosely dashed] (14,5.5)--(14,6.5)--(14,8);
 \draw [thin] (14,8)--(14,8.5)--(14,9)--(14,9.5);
 \draw [loosely dashed] (14,9.5)--(14,10.5);
 \draw [thin] (14,2) to[out=100, in=260] (14,10.5);
 
 \draw [densely dotted] (2,6.25) ellipse (1.2 and 5.5);
 \draw [densely dotted] (6,6.25) ellipse (1.2 and 5.5);
 \draw [densely dotted] (10,6.25) ellipse (1.2 and 5.5);
 \draw [densely dotted] (14,6.25) ellipse (1.2 and 5.5);

 \node [right] at (2,1.9){\small{$ v_{i1}^1$}};
 \node [right] at (2,2.6){\small{$ v_{i2}^1$}};
 \node [right] at (2,4.4){\small{$ v_{i(n-1)}^1$}};
 \node [right] at (2,5.1){\small{$ v_{in}^1$}};
 \node [right] at (2,7.9){\small{$ v_{i(r-1)(n-2)}^1$}};
 \node [right] at (2,10.6){\small{$ v_{ir(n-2)}^1$}};
 
 \node [right] at (6,1.9){\small{$ v_{i1}^2$}};
 \node [right] at (6,2.6){\small{$ v_{i2}^2$}};
 \node [right] at (6,4.4){\small{$ v_{i(n-1)}^2$}};
 \node [right] at (6,5.1){\small{$ v_{in}^2$}};
 \node [right] at (6,7.9){\small{$ v_{i(r-1)(n-2)}^2$}};
 \node [right] at (6,10.6){\small{$ v_{ir(n-2)}^2$}};
 
 \node [right] at (10,1.9){\small{$ v_{i1}^3$}};
 \node [right] at (10,2.6){\small{$ v_{i2}^3$}};
 \node [right] at (10,4.4){\small{$ v_{i(n-1)}^3$}};
 \node [right] at (10,5.1){\small{$ v_{in}^3$}};
 \node [right] at (10,7.9){\small{$ v_{i(r-1)(n-2)}^3$}};
 \node [right] at (10,10.6){\small{$ v_{ir(n-2)}^3$}};
 
 \node [right] at (14,1.9){\small{$ v_{i1}^4$}};
 \node [right] at (14,2.6){\small{$ v_{i2}^4$}};
 \node [right] at (14,4.4){\small{$ v_{i(n-1)}^4$}};
 \node [right] at (14,5.1){\small{$ v_{in}^4$}};
 \node [right] at (14,7.9){\small{$ v_{i(r-1)(n-2)}^4$}};
 \node [right] at (14,10.4){\small{$ v_{ir(n-2)}^4$}};
 
 \node [left] at (2.1,2.25){\small{$ e_{i1}^1 \bf \{$}};
 \node [left] at (2.1,4.75){\small{$ e_{i2}^1 \bf \{$}};
 \node [left] at (2.1,8.75){\small{$ e_{ir}^1 \bf \{$}};
 
 \node [left] at (6.1,2.25){\small{$ e_{i1}^2 \bf \{$}};
  \node [left] at (6.1,4.75){\small{$ e_{i2}^2 \bf \{$}};
  \node [left] at (6.1,8.75){\small{$ e_{ir}^2 \bf \{$}};
 
 \node [left] at (10.1,2.25){\small{$ e_{i1}^3 \bf \{$}};
  \node [left] at (10.1,4.75){\small{$ e_{i2}^3 \bf \{$}};
  \node [left] at (10.1,8.75){\small{$ e_{ir}^3 \bf \{$}};
 
  \node [left] at (14.1,2.25){\small{$ e_{i1}^4 \bf \{$}};
   \node [left] at (14.1,4.75){\small{$ e_{i2}^4 \bf \{$}};
   \node [left] at (14.1,8.75){\small{$ e_{ir}^4 \bf \{$}};

 
 \node [above] at (2,10.8){$ C_i^1$};
 \node [above] at (6,10.8){$ C_i^2$};
 \node [above] at (10,10.8){$ C_i^3$};
 \node [above] at (14,10.8){$ C_i^4$};
 
 \node [above] at (2,12) {$ Q_{n-2}^1$} ;
 \node [above] at (6,12) {$ Q_{n-2}^2$} ;
 \node [above] at (10,12) {$ Q_{n-2}^3$};
 \node [above] at (14,12) {$ Q_{n-2}^4$} ;

     \end{tikzpicture}
     \caption{Corresponding $r(n-2)$-cycles in the four copies of $Q_{n-2}$ in $Q_n.$}\label{f3}
     \end{figure}
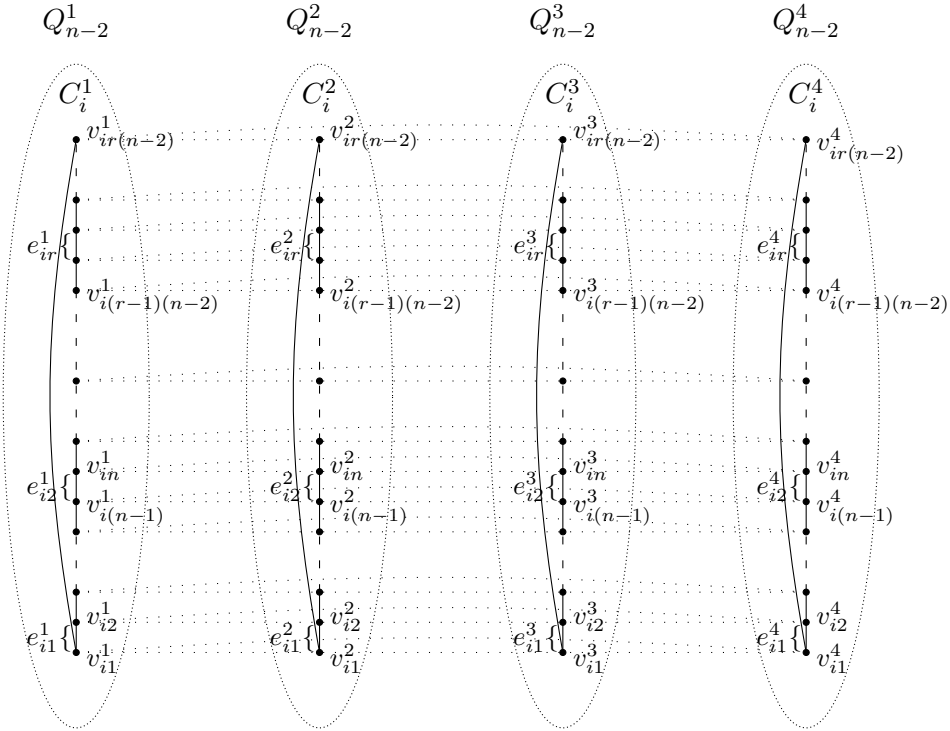
  Since the four
   copies $Q_{n-2}^{(1)}, Q_{n-2}^{(2)}, Q_{n-2}^{(3)}, Q_{n-2}^{(4)}$ are
   vertex-disjoint and together they have $2^n$ vertices,
   $\{ e_{i1}^p, e_{i2}^p,$ $ \ldots, e_{ir}^p \colon i=1, 2, \ldots s ~{\rm and}~p =1, 2, 3, 4\} $
   forms a perfect matching in $Q_n.$

 Let $ q \equiv ~p+ 1~(\rm mod~4).$  We now construct a $rn$-cycle
  $C_{ip}$ by deleting the $r$ edges $ e_{ij}^p$ from $C_i^p,$  and adding the $r$  edges $ e_{ij}^q$ of $C_i^q$
 along with the cross edges between the end vertices of these $2r$ edges as follows.

 Define $ C_{ip} = (C_i^p - \{ e_{ij}^p\colon j=1, ,2 \ldots, r\}) \cup
    \{ e_{ij}^q\colon j=1, ,2 \ldots, r\} \cup \{(v_{ij'}^p, v_{ij'}^q), (v_{ij''}^p, v_{ij''}^q)  \colon j=1, 2 \ldots, r\},$ where $v_{ij'}^p$ and $v_{ij''}^p$ are the end vertices of the edge $e_{ij}^p. $ Thus we get  $rn$-cycles  $C_{i1}, C_{i2}, C_{i3}, C_{i4}$ in
$Q_n$ from the cycles $C_i^1, C_i^2, C_i^3, C_i^4$ and the cross
edges  between  the end  vertices of the edges  $\{ e_{i1}^p,
e_{i2}^p, \ldots, e_{ir}^p \colon p=1, 2, 3, 4\} .$  Also, $C_{i1}
\cup C_{i2} \cup C_{i3} \cup C_{i4} = C_i^1 \cup C_i^2 \cup C_i^3
\cup C_i^4 \cup \{(v_{ij'}^p, v_{ij'}^q), (v_{ij''}^p, v_{ij''}^q)
\colon  j=1, 2 \ldots, r,~ {\rm and}~ p= 1,2,3,4\} $ with $ q \equiv ~p+ 1~(
\rm mod~4).$  Since $C_i^1 ,C_i^2 ,C_i^3, C_i^4$ are mutually
edge-disjoint, $C_{i1}, C_{i2} , C_{i3}, C_{i4}$ are also
edge-disjoint with each other. Further, for each $p \in
\{1,2,3,4\} ,$ $C_{1p} ,C_{2p} ,\ldots,  C_{rp}$ are edge-disjoint
with each other because $C_1^p ,C_2^p , \ldots,  C_r^p$  are
mutually edge-disjoint. Moreover, each $ C_{ip}$ contains the $r$
edges $e_{ij}^q$ for $j =1, 2, \ldots, r.$ Now, $C_i^p - \{
e_{ij}^p\colon j=1, 2 \ldots, r\}$ is union of $r$ vertex-disjoint
paths of length $n-3$ each. If $T$ is one such path with end
vertices $ x^p, y^p,$ then  $ T \cup \{ (x^p, x^q), (y^p, y^q)\}$
is a path of length $ n-1$ in $C_{ip} - \{ e_{ij}^q\colon j=1, 2
\ldots, r\}.$  Thus we get $r$ vertex-disjoint paths of length
$n-1$ each in $C_{ip} - \{ e_{ij}^q\colon j=1, 2 \ldots, r\}$ from $r$
paths of  lengths $n-3$ in $C_i^p - \{ e_{ij}^p\colon j=1, 2 \ldots,
r\}.$

These $4s$ edge-disjoint $rn$-cycles  $C_{ip}$'s altogether have the $
4rns = 4 rn 2^{n-3}/r = n 2^{n-1} $ edges which exhaust the entire
edge set of $Q_n$ and thus decomposes $Q_n.$  Therefore $Q_n$ has
a decomposition into  $rn$-cycles as desired. In the following figure we have 
shown the construction of $rn$-cycles \emph{viz.,} $C_{i1}$ and $C_{i2}$ in $Q_n,$ 
with the help of $r(n-2)$-cycles $C_i^1, C_i^2$ and $C_i^2, C_i^3$ in $Q_{n-2}$ respectively. 
(The cycles $C_{i1}$ and $C_{i2}$ are represented by the solid lines.)\\
    
\begin{figure}[h] 
	\begin{tikzpicture} [scale=1.23]
	\draw[fill=black](2,2) circle(.05); 
	\draw[fill=black](2,2.5) circle(.05);
	\draw[fill=black](2,3) circle(.05);
	\draw[fill=black](2,4) circle(.05);
	\draw[fill=black](2,4.5) circle(.05);
	\draw[fill=black](2,5) circle(.05);
	\draw[fill=black](2,5.5) circle(.05);
	\draw[fill=black](2,6.5) circle(.05);
	\draw[fill=black](2,8) circle(.05);
	\draw[fill=black](2,8.5) circle(.05);
	\draw[fill=black](2,9) circle(.05);
	\draw[fill=black](2,9.5) circle(.05);
	\draw[fill=black](2,10.5) circle(.05);
	
	\draw[fill=black](5,2) circle(.05); 
	\draw[fill=black](5,2.5) circle(.05);
	\draw[fill=black](5,3) circle(.05);
	\draw[fill=black](5,4) circle(.05);
	\draw[fill=black](5,4.5) circle(.05);
	\draw[fill=black](5,5) circle(.05);
	\draw[fill=black](5,5.5) circle(.05);
	\draw[fill=black](5,6.5) circle(.05);
	\draw[fill=black](5,8) circle(.05);
	\draw[fill=black](5,8.5) circle(.05);
	\draw[fill=black](5,9) circle(.05);
	\draw[fill=black](5,9.5) circle(.05);
	\draw[fill=black](5,10.5) circle(.05);
	
	\draw[fill=black](8,2) circle(.05); 
	\draw[fill=black](8,2.5) circle(.05);
	\draw[fill=black](8,3) circle(.05);
	\draw[fill=black](8,4) circle(.05);
	\draw[fill=black](8,4.5) circle(.05);
	\draw[fill=black](8,5) circle(.05);
	\draw[fill=black](8,5.5) circle(.05);
	\draw[fill=black](8,6.5) circle(.05);
	\draw[fill=black](8,8) circle(.05);
	\draw[fill=black](8,8.5) circle(.05);
	\draw[fill=black](8,9) circle(.05);
	\draw[fill=black](8,9.5) circle(.05);
	\draw[fill=black](8,10.5) circle(.05);
	
	\draw[fill=black](11,2) circle(.05); 
	\draw[fill=black](11,2.5) circle(.05);
	\draw[fill=black](11,3) circle(.05);
	\draw[fill=black](11,4) circle(.05);
	\draw[fill=black](11,4.5) circle(.05);
	\draw[fill=black](11,5) circle(.05);
	\draw[fill=black](11,5.5) circle(.05);
	\draw[fill=black](11,6.5) circle(.05);
	\draw[fill=black](11,8) circle(.05);
	\draw[fill=black](11,8.5) circle(.05);
	\draw[fill=black](11,9) circle(.05);
	\draw[fill=black](11,9.5) circle(.05);
	\draw[fill=black](11,10.5) circle(.05);
	
	\draw [dotted] (2,2)--(2,2.5) ;
	\draw [loosely dashed] (2,3)--(2,4) ;
	\draw [dotted] (2,4.5)--(2,5) ;
	\draw [loosely dashed] (2,5.5)--(2,6.5);
	\draw [loosely dashed] (2,7)--(2,7.6) ;
	\draw [dotted] (2,8.5)--(2,9) ;
	\draw [loosely dashed] (2,9.5)--(2,10.5) ;
	
	\draw [dotted] (8,2)--(8,2.5) ;
	\draw [loosely dashed] (8,3)--(8,4) ;
	\draw [dotted] (8,4.5)--(8,5) ;
	\draw [loosely dashed] (8,5.5)--(8,6.5);
	\draw [loosely dashed] (8,7)--(8,7.6) ;
	\draw [dotted] (8,8.5)--(8,9) ;
	\draw [loosely dashed] (8,9.5)--(8,10.5) ;
	
	\draw [loosely dashed] (5,3)--(5,4) ;
	\draw [loosely dashed] (5,5.5)--(5,6.5) ;
	\draw [loosely dashed] (5,7)--(5,7.6) ;
	\draw [loosely dashed] (5,9.5)--(5,10.5) ;
	
	\draw [loosely dashed] (11,3)--(11,4) ;
	\draw [loosely dashed] (11,5.5)--(11,6.5);
	\draw [loosely dashed] (11,7)--(11,7.6) ;
	\draw [loosely dashed] (11,9.5)--(11,10.5) ;

	\draw (2,2)--(5,2)--(5,2.5)--(2,2.5)--(2,3);
	\draw (2,4)--(2,4.5)--(5,4.5)--(5,5)--(2,5)--(2,5.5);
	\draw (2,8)--(2,8.5)--(5,8.5)--(5,9)--(2,9)--(2,9.5);
	\draw (2,2) to[out=100, in=260] (2,10.5);
	
	\draw (8,2)--(11,2)--(11,2.5)--(8,2.5)--(8,3);
	\draw (8,4)--(8,4.5)--(11,4.5)--(11,5)--(8,5)--(8,5.5);
	\draw (8,8)--(8,8.5)--(11,8.5)--(11,9)--(8,9)--(8,9.5);
	\draw (8,2) to[out=100, in=260] (8,10.5);
	
	\draw [densely dotted, thick] (5,2.5)--(5,3);
	\draw [densely dotted, thick] (5,4)--(5,4.5);
	\draw [densely dotted, thick] (5,5)--(5,5.5);
	\draw [densely dotted, thick] (5,8)--(5,8.5);
	\draw [densely dotted, thick] (5,9)--(5,9.5);
	\draw [densely dotted, thick] (5,2) to[out=100, in=260] (5,10.5);
	
	\draw [densely dotted, thick] (11,2.5)--(11,3);
	\draw [densely dotted, thick] (11,4)--(11,4.5);
	\draw [densely dotted, thick] (11,5)--(11,5.5);
	\draw [densely dotted, thick] (11,8)--(11,8.5);
	\draw [densely dotted, thick] (11,9)--(11,9.5);
	\draw [densely dotted, thick] (11,2) to[out=100, in=260] (11,10.5);

	\draw [dotted] (2,3)--(5,3);
	\draw [dotted] (2,4)--(5,4);
	\draw [dotted] (2,5.5)--(5,5.5);
	\draw [dotted] (2,6.5)--(5,6.5);
	\draw [dotted] (2,8)--(5,8);
	\draw [dotted] (2,9.5)--(5,9.5);
	\draw [dotted] (2,10.5)--(5,10.5);
	
	\draw [dotted] (8,3)--(11,3);
	\draw [dotted] (8,4)--(11,4);
	\draw [dotted] (8,5.5)--(11,5.5);
	\draw [dotted] (8,6.5)--(11,6.5);
	\draw [dotted] (8,8)--(11,8);
	\draw [dotted] (8,9.5)--(11,9.5);
	\draw [dotted] (8,10.5)--(11,10.5);

	\node [right] at (2,1.9){\small{$ v_{i1}^1$}};
	\node [right] at (2,2.6){\small{$ v_{i2}^1$}};
	\node [right] at (2,4.4){\small{$ v_{i(n-1)}^1$}};
	\node [right] at (2,5.1){\small{$ v_{in}^1$}};
	\node [right] at (2,7.9){\small{$ v_{i(r-1)(n-2)}^1$}};
	\node [right] at (2,10.6){\small{$ v_{ir(n-2)}^1$}};
	
	\node [right] at (5,1.9){\small{$ v_{i1}^2$}};
	\node [right] at (5,2.6){\small{$ v_{i2}^2$}};
	\node [right] at (5,4.4){\small{$ v_{i(n-1)}^2$}};
	\node [right] at (5,5.1){\small{$ v_{in}^2$}};
	\node [right] at (5,7.9){\small{$ v_{i(r-1)(n-2)}^2$}};
	\node [right] at (5,10.6){\small{$ v_{ir(n-2)}^2$}};
	
	\node [right] at (8,1.9){\small{$ v_{i1}^2$}};
	\node [right] at (8,2.6){\small{$ v_{i2}^2$}};
	\node [right] at (8,4.4){\small{$ v_{i(n-1)}^2$}};
	\node [right] at (8,5.1){\small{$ v_{in}^2$}};
	\node [right] at (8,7.9){\small{$ v_{i(r-1)(n-2)}^2$}};
	\node [right] at (8,10.6){\small{$ v_{ir(n-2)}^2$}};
	
	\node [right] at (11,1.9){\small{$ v_{i1}^3$}};
	\node [right] at (11,2.6){\small{$ v_{i2}^3$}};
	\node [right] at (11,4.4){\small{$ v_{i(n-1)}^3$}};
	\node [right] at (11,5.1){\small{$ v_{in}^3$}};
\node [right] at (11,7.9){\small{$ v_{i(r-1)(n-2)}^3$}};
	\node [right] at (11,10.6){\small{$ v_{ir(n-2)}^3$}};
	
	\node [left] at (2.1,2.25){\small{$ e_{i1}^1 \bf \{$}};
	\node [left] at (2.1,4.75){\small{$ e_{i2}^1 \bf \{$}};
	\node [left] at (2.1,8.75){\small{$ e_{ir}^1 \bf \{$}};
	
	\node [left] at (5.1,2.25){\small{$ e_{i1}^2 \bf \{$}};
		\node [left] at (5.1,4.75){\small{$ e_{i2}^2 \bf \{$}};
		\node [left] at (5.1,8.75){\small{$ e_{ir}^2 \bf \{$}};
	
	\node [left] at (8.1,2.25){\small{$ e_{i1}^2 \bf \{$}};
		\node [left] at (8.1,4.75){\small{$ e_{i2}^2 \bf \{$}};
		\node [left] at (8.1,8.75){\small{$ e_{ir}^2 \bf \{$}};
	
\node [left] at (11.1,2.25){\small{$ e_{i1}^3 \bf \{$}};
	\node [left] at (11.1,4.75){\small{$ e_{i2}^3 \bf \{$}};
	\node [left] at (11.1,8.75){\small{$ e_{ir}^3 \bf \{$}};
	
	\node [above] at (2,10.8){$C_i^1$};
	\node [above] at (5,10.8){$C_i^2$};
	\node [above] at (8,10.8){$C_i^2$};
	\node [above] at (11,10.8){$C_i^3$};
	
	\node [below] at (3.5,1.8){$C_{i1}$};
	\node [below] at (9.5,1.8){$C_{i2}$};
	
	\end{tikzpicture}
	\caption{Construction of $rn$-cycles in $Q_n$ using $r(n-2)$-cycles in $Q_{n-2}$ }\label{f3}
\end{figure}
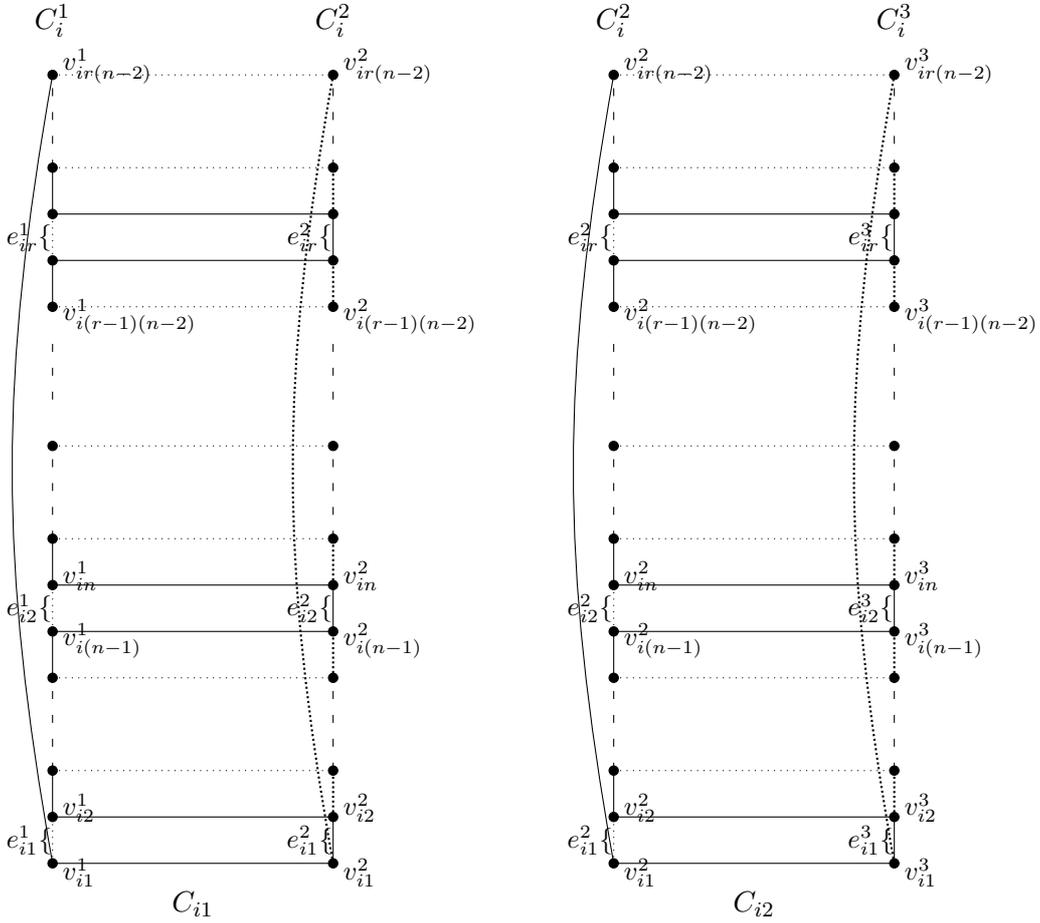     

\end{proof}

\section{Proof for Matching}
The proof of Main Theorem 1.6 follows by applying induction argument on $n.$ In Section 4, we have proved the induction step in Theorem 4.1. However, we have stated the Basis step in Theorem 3.1, and proved its first part. It remains to prove the second part of the theorem, which is crucial. 

In this section, we prove the second part of Theorem 3.1. For the proof we use the properties of the subgroup $H,$ given in several lemmas of Section 2. Also, throughout this section we use the Notation 3.2 introduced in the Step 2 of the proof of Theorem 3.1. 

Recall that in the Step 2 of Theorem 3.1, we had selected the edges from the $2^mn$-cycles of $Q_n,$ satisfying condition (I) of Theorem 3.1. We need to show that those edges also satisfy condition (II) of the theorem. More precisely, we need to prove that the collection $\mathcal{M} = M \cup M' = (M_1\cup M_2 \ldots\cup M_{n/4} ) \cup (M'_1\cup M'_2 \ldots\cup M'_{n/4} ) $ of $ 2^{n-1}$ edges  selected from the cycles of $F_i$ and $ F_{i}'$ in step 2 of Theorem 3.1, is a perfect matching of $Q_n,$ where $n=2^m.$

Recall that $G =\{A \subset \langle 2^{m-1}-1\rangle \colon  |A|~even\}$ is a group w.r.t. symmetric difference $\Delta$ and  $H$ is the subgroup of $G$ that is generated by the collection $\mathsf K =\{~\{j,2^{i-1}+j\}|~j=1,2,\ldots,2^{i-1}-1, ~i=2,\ldots,m-1\}.$  We have proved some properties of $H$ in Lemmas 2.3, 2.5, 2.7 and 2.8 of Section 2. Those properties will be used here.

Initially we prove that the set $M = M_1 \cup M_2 \cup \ldots \cup M_{n/4},$ where $M_i$ is the collection of edges selected from the cycles of $F_i$ in Section 3, is a matching. For that, first we prove $M_1 \cup M_j, j>1$ is a matching. As each $F_i, i=1,2,\ldots,n/4$ is the union of vertex-disjoint cycles, it suffices to prove that the edges selected from the first cycle $\Phi(\emptyset,\mathcal S)$ in $F_1$ are vertex-disjoint with those selected from an arbitrary cycle $\Phi(A\Delta\langle 2j-2\rangle\Delta\theta(\langle 2j-2\rangle\Delta B))$ in $F_j,$ where $A, B \in H$ and $2 \leq j \leq n/4.$

\textbf{Notation 3.2.} \begin{enumerate}
\item [(i)] $\langle 0\rangle=\emptyset.$
\item [(ii)]For $1\leq i \leq n/2,$  $\langle i\rangle =\{1,2,\ldots,i\}=[i]$ (in usual sense)
\item [(iii)]For $1\leq i \leq n/2,$  $\langle n/2+i\rangle =\{i+1,i+2,\ldots,n/2\}=\langle n/2 \rangle\Delta\langle i \rangle.$
\item [(iv)]For $X \subseteq \{1,2,\ldots,n/2\},$ $\overline{X}=\langle n/2 \rangle\Delta X,$ the \emph{complement} of $X$ in $\langle n/2 \rangle.$ 
\item [(v)]For $A=\{a_1,a_2,\ldots,a_r\}\subseteq \langle n/2 \rangle,$ $\theta (A)=\{\theta(a_1),\theta(a_2),\ldots,\theta(a_r)\},$\\ and $\theta( \emptyset)= \theta(\langle 0\rangle )=  \emptyset.$ Also, $\overline{\theta(A)}=\theta(\overline{A}).$
\end{enumerate} 
Therefore $\langle n/2+i\rangle=\overline{\langle i\rangle}$ and $\overline{\langle n/2+i\rangle}=\overline{\overline{\langle i\rangle}}=\langle i\rangle,$ for $1\leq i \leq n/2.$\\ 
 Recall that the $n$ edges selected from the cycle $\Phi^1_\emptyset$ of $F_1$ contributing to $M_1$ are\\
\begin{tabular}{lll}
$e^1_r$&$=$&$ (~\overline{\langle n/2-r+1\rangle }\Delta~\theta(~\langle r-1\rangle ~), \overline{\langle n/2-r+2\rangle }\Delta~\theta(~\langle r-1\rangle ~)~), $\\
$e^1_{n/2+r}$&$=$&$ (~\langle n/2-r+1\rangle \Delta~\theta(~\overline{\langle r-1\rangle }~), \langle n/2-r+2\rangle \Delta~\theta(~\overline{\langle r-1\rangle }~)~),$ where $r=1,2,\ldots,n/2.$
\end{tabular} \\
Also, the $n$ edges selected from the cycle $\Phi^j_{A\Delta~\theta(~B~)}$ of $F_j$ contributing to $M_j$ are\\

\begin{tabular}{lll}
$e^j_{s_{A\Delta~\theta(~B~)}}$&$=$&$ (~A\Delta\langle 2j-2\rangle\Delta\overline{ \langle n/2-s+2j-1\rangle }\Delta~\theta(~\langle s-1\rangle\Delta \langle 2j-2\rangle \Delta B~),$\\
&&$A\Delta\langle 2j-2\rangle\Delta\overline{ \langle n/2-s+2j\rangle }\Delta~\theta(~\langle s-1\rangle\Delta \langle 2j-2\rangle \Delta B~)~), $\\
$e^j_{n/2+s_{A\Delta~\theta(~B~)}}$&$=$&$(~ A\Delta\langle 2j-2\rangle\Delta \langle n/2-s+2j-1\rangle \Delta~\theta(~\overline{\langle s-1\rangle}\Delta \langle 2j-2\rangle \Delta B~),$\\
&&$ A\Delta\langle 2j-2\rangle\Delta \langle n/2-s+2j\rangle \Delta~\theta(~\overline{\langle s-1\rangle}\Delta \langle 2j-2\rangle \Delta B~)~),$ \\ &&  
\end{tabular}\\ where $s=2j-1,2j,\ldots,n/2+2j-2$ and $j=2,3,\ldots,n/4.$

We prove that the edges $e^1_r,e^1_{n/2+r}, e^j_{s_{A\Delta~\theta(~B~)}}, e^j_{n/2+s_{A\Delta~\theta(~B~)}}$ are all vertex-disjoint. 

 To start with, we prove that the first end vertex of $e^1_r$ is different from the first end vertex of $e^j_{s_{A\Delta~\theta(~B~)}}$ in the following lemma:

\begin{lemma}
$\overline{\langle n/2-r+1\rangle }\Delta~\theta(~\langle r-1\rangle ~)\neq A\Delta\langle 2j-2\rangle\Delta\overline{ \langle n/2-s+2j-1\rangle }\Delta~\theta(~\langle s-1\rangle\Delta \langle 2j-2\rangle \Delta B~)$ for any $  A, B \in H , $   $ r \in \{1, 2, \ldots, n/2\},$ $s \in \{2j-1, 2j, \ldots, n/2 + 2j - 2\}$ and $ j \geq 2.$ 
\end{lemma}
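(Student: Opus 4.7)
The plan is to argue by contradiction: assume the two claimed endpoints coincide for some $A,B\in H$ and admissible $r,s,j$. Because every vertex of $Q_n$ has a unique expression as $X\Delta\theta(Y)$ with $X,Y\subseteq\langle n/2\rangle$, I would project the equality onto its $\langle n/2\rangle$-part and its $\theta(\langle n/2\rangle)$-part separately. Using $\overline{\langle k\rangle}=\langle n/2\rangle\Delta\langle k\rangle$ so that the two complements on the right-hand side together contribute $\langle n/2\rangle\Delta\langle n/2\rangle=\emptyset$, and the bijectivity of $\theta$, these two scalar equalities collapse to
\[
A=\langle n/2-r+1\rangle\Delta\langle 2j-2\rangle\Delta\langle n/2-s+2j-1\rangle,\qquad B=\langle r-1\rangle\Delta\langle s-1\rangle\Delta\langle 2j-2\rangle,
\]
and both sets must lie in $H$. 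The task is to show this is impossible.

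I would first dispose of the case $s>n/2+1$: then by Notation~3.2(iii) the set $\langle s-1\rangle=\{s-n/2,\dots,n/2\}$ contains $n/2$, while $\langle r-1\rangle$ and $\langle 2j-2\rangle$ are subsets of $\{1,\dots,n/2-1\}$; hence $n/2\in B$, but $B\in H\subseteq\mathcal{P}([n/2-1])$, a contradiction.

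Suppose then $s\le n/2+1$, so all three sets in $B$ are genuine initial segments of sizes $\le n/2-1$. The elementary identity $\langle a\rangle\Delta\langle b\rangle\Delta\langle c\rangle=\{1,\dots,a\}\cup\{b+1,\dots,c\}$ (for $a\le b\le c$) shows that $B$ is either empty, a non-empty consecutive string, or a disjoint union $\{1,\dots,a\}\cup\{b+1,\dots,c\}$ with $0<a<b<c$. A non-empty consecutive string is excluded from $H$ by Lemma~2.3. If $B=\emptyset$, then sorting $(r-1,s-1,2j-2)$ and using $s-1\ge 2j-2\ge 2$ forces $r=1$ and $s=2j-1$; substituting gives $A=\langle n/2\rangle\Delta\langle 2j-2\rangle\Delta\langle n/2\rangle=\langle 2j-2\rangle$, again a non-empty consecutive string, contradicting $A\in H$ by Lemma~2.3. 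In the remaining disjoint-union case I would conclude $B\notin H$ by invoking Lemma~2.6(3) when $c-b>a$ and Lemma~2.6(4) or~(5) when $c-b\le a$.

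The main obstacle will be this last disjoint-union subcase: for each of the three orderings of $\{r-1,\,s-1,\,2j-2\}$ (noting $s-1\ge 2j-2$) one must read off $a,b,c$ explicitly, and then verify that the length and gap inequalities that result match exactly the hypotheses of whichever part of Lemma~2.6 is being applied. The ranges $j\ge 2$ and $s\in\{2j-1,\dots,n/2+2j-2\}$ assumed in the statement are precisely what make these verifications go through.
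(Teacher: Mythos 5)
Your opening reduction is exactly the paper's: projecting the assumed equality onto the $\langle n/2\rangle$-part and the $\theta(\langle n/2\rangle)$-part gives
$A=\langle 2j-2\rangle\Delta\langle n/2-r+1\rangle\Delta\langle n/2-s+2j-1\rangle$ and $B=\langle 2j-2\rangle\Delta\langle r-1\rangle\Delta\langle s-1\rangle$ with both required to lie in $H$, and your handling of $s\geq n/2+1$ (then $n/2\in B$), of the empty case (forcing $r=1$, $s=2j-1$ and making $A=\langle 2j-2\rangle$ consecutive), and of the consecutive-string case via Lemma 2.3 all match the paper's Case (I) and Claim (a). The gap is in the step you yourself flag as ``the main obstacle'' and then assume away: in the surviving configurations the disjoint-union case does \emph{not} ``match exactly the hypotheses'' of the disjoint-union lemma (your 2.6(3)--(5); the paper's Lemma 2.7). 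Part (3) requires the trailing string to be strictly \emph{longer} than the initial segment, but the paper's Claim (b) ($s-(2j-2)<r<s$) shows that in the hard regime both $A$ and $B$ have trailing strings strictly \emph{shorter} than $\langle 2j-2\rangle$; and parts (4) and (5) carry rigid power-of-two hypotheses (initial segment of length exactly $2^k$ with gap offset exceeding $2^{k+1}$, or offset equal to a sum of distinct powers of two all larger than the initial segment) that generic admissible $(r,s,j)$ simply violate. Worse, no argument about $B$ alone can possibly work: sets of this shape can genuinely lie in $H$. For instance, with $j=2$, $r=5$, $s=7$ one gets $B=\{1,2\}\cup\{5,6\}=\{1,5\}\Delta\{2,6\}\in H$; there the contradiction must come from $A$ (which collapses to the consecutive string $\{1,2\}$), and symmetric examples defeat any single-set strategy aimed at $A$ instead.

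This is precisely why the paper's proof, after your reduction, still occupies several pages: it extracts the joint constraints $2\le r\le n/2$, $2j\le s\le n/2$, $s'=s-(2j-2)<r<s$ and $2j-2<n/4$ by playing the two memberships off each other (e.g.\ $n/4\in A$ would force $r\ge n/4+2$ while $n/4\in B$ forces $r\le n/4$), and then, in the central case $s'>n/4$, runs an iterative reduction that is not an instance of any single part of Lemma 2.7: one repeatedly replaces $A$ by $A\Delta\bigl(A_2\cup(A_2-S_t)\bigr)$ with $S_t$ a sum of consecutive powers of two (an element of $H$), shifting the second block downward; at every stage the persistent length inequality $|A_1|>|A_2|$ rules out the two blocks being translates by a power of two, and the process terminates in a contradiction with Lemma 2.7(1), with Lemma 2.5(2) invoked along the way to rule out overlap of the blocks (the case $s'<n/4$ then follows by exchanging the roles of $A$ and $B$). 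None of this machinery appears in your outline, and without it --- or a genuine replacement for it --- the final subcase, which is the actual content of the lemma, does not close.
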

\begin{proof} We prove the lemma by contradiction.\\ Suppose $\overline{\langle n/2-r+1\rangle}\Delta~\theta(~\langle r-1\rangle ~) = $ $A\Delta\langle 2j-2\rangle\Delta\overline{ \langle n/2-s+2j-1\rangle }\Delta~\theta(~\langle s-1\rangle\Delta \langle 2j-2\rangle \Delta B~)$ for some  $  A, B \in H , $   $ r \in \{1, 2, \ldots, n/2\},$ $s \in \{2j-1, 2j, \ldots, n/2 + 2j - 2\}$ and $ j \geq 2.$ \\Therefore $A\Delta\langle 2j-2\rangle\Delta\overline{ \langle n/2-s+2j-1\rangle }\Delta\overline{\langle n/2-r+1\rangle}=\theta(~B \Delta\langle 2j-2\rangle \Delta \langle r-1\rangle\Delta \langle s-1\rangle~).$\\ But this is possible only if sets on both the sides of equality are empty, as $LHS \subseteq \{1,2,\ldots,n/2\} ,$ while $RHS \subseteq \theta(~\{1,2,\ldots,n/2\}~)= \{n/2+1,n/2+2,\ldots,n\}.$ Also, by Notation 3.2(v), $\theta(~\emptyset~)=\emptyset.$ Therefore we get \begin{center}
\begin{tabular}{lll}
$A$&$=$&$\langle 2j-2\rangle\Delta\overline{ \langle n/2-r+1\rangle}\Delta\overline{ \langle n/2-s+2j-1\rangle },$\\
$B$&$=$&$\langle 2j-2\rangle\Delta \langle r-1\rangle\Delta \langle s-1\rangle .$
\end{tabular}
\end{center}
Note that by 3.2(iv), $\overline{\langle n/2-r+1\rangle }=\langle n/2\rangle \Delta\langle n/2-r+1\rangle$ and $\overline{\langle n/2-s+2j-1\rangle}=\langle n/2\rangle \Delta\langle n/2-s+2j-1\rangle.$ So we can rewrite $A$ and $B$ as \begin{center}
\begin{tabular}{lll}
$A$&$=$&$\langle 2j-2\rangle\Delta \langle n/2-r+1\rangle\Delta \langle n/2-s+2j-1\rangle ,$\\
$B$&$=$&$\langle 2j-2\rangle\Delta \langle r-1\rangle\Delta \langle s-1\rangle ,$
\end{tabular}
\end{center}
where $  A, B \in H , $   $ r \in \{1, 2, \ldots, n/2\},$ $s \in \{2j-1, 2j, \ldots, n/2 + 2j - 2\}$ and $ j \geq 2.$ \\ \\
We have following two cases.\\
\textbf{(~I~).} $A=\emptyset$ or $B=\emptyset.$\\
Suppose $A=\emptyset.$ Then $\langle 2j-2\rangle = \langle n/2-r+1\rangle\Delta \langle n/2-s+2j-1\rangle. $ Here all the three sets start with $1$ and all are subsets of $\{1,2,\ldots,n/2\}.$ Therefore either $n/2-r+1=2j-2$ and $n/2-s+2j-1=0,$ or $n/2-r+1=0$ and $n/2-s+2j-1=2j-2.$ But $n/2-s+2j-1=0$ is not possible, as $j\geq 2$ and $2j-1\leq s \leq n/2+2j-2.$ So $n/2-r+1=0$ and $n/2-s+2j-1=2j-2,$ which gives $r=n/2+1=s.$ Due to this, $B=\langle 2j-2\rangle\Delta \langle r-1\rangle\Delta \langle s-1\rangle =\langle 2j-2\rangle,$ a contradiction to $B \in H$ by Lemma 2.3(1). Similarly, if $B=\emptyset,$ then we get a contradiction to $A \in H.$\\ \\
\textbf{(~II~).} Both $A$ and $B$ are non-empty.\vskip.2cm\noindent
\textbf{\emph{Claim(a)}.} $2 \leq r \leq n/2$ and $2j \leq s \leq n/2.$\\
Suppose $r=1$ and $s=2j-1.$ Then $A$ becomes a consecutive string giving contradiction to $A\in H$ by Lemma 2.3(1). If $r=1$ and  $s \neq 2j-1,$ then $n/2-r+1=n/2$ and $n/2-s+2j-1 < n/2.$ Also, $2\leq j \leq n/4$ implies $2j-2\leq n/2-2.$ So $A$ will contain $n/2,$ a contradiction by Lemma 2.5(1). Suppose $r\neq 1$ and $s=2j-1.$ Then $n/2-r+1<n/2$ and $n/2-s+2j-1 = n/2.$ Hence $A$ will contain $n/2,$ a contradiction by Lemma 2.5(1). Therefore, $r \neq 1, s \neq 2j-1.$ It remains to show that $s \leq n/2.$ If $s \geq n/2+1,$ then $\langle s-1\rangle$ and hence, $B$ will contain $n/2,$  a contradiction to $B \in H$ by Lemma 2.5(1). Therefore $2 \leq r \leq n/2$ and $2j \leq s \leq n/2$ proving the claim.   \vskip.2cm\noindent
We list some of the values of $r-1,s-1,n/2-r+1$ and $n/2-s+2j-1$ for the ready reference as follows.\vskip.2cm\noindent
\hskip2cm\begin{tabular}{|c|c|c|}
\hline
$r$ & $r-1$ & $n/2-r+1$ \\
\hline
$2$ & $1$ & $n/2-1$ \\
$3$ & $2$ & $n/2-2$ \\
$4$ & $3$ & $n/2-3$ \\
\vdots &\vdots &\vdots \\
$n/2$ & $n/2-1$ & $1$ \\
\hline
\end{tabular}
\hskip2cm\begin{tabular}{|c|c|c|}
\hline
 $s$ & $s-1$ & $n/2-s+2j-1$\\
\hline
 $2j$ & $2j-1$ & $n/2-1$\\
$2j+1$ & $2j$ & $n/2-2$\\
 $2j+2$ & $2j+1$ & $n/2-3$\\
\vdots &\vdots &\vdots\\
 $n/2$ & $n/2-1$ & $2j-1$\\
\hline
\end{tabular}\\
\vskip.2cm\noindent
\textbf{\emph{Claim(b)}.} $s-(~2j-2~)<r<s.$\\
Suppose $s'=s-(~2j-2~).$ So our claim reduces to $s'<r<s.$\\ As $2j \leq s \leq n/2,$ we have $2\leq s' \leq n/2-(~2j-2~).$ Also, $n/2-s+2j-1=n/2-s'+1.$\vskip.2cm\noindent Therefore $A=\langle 2j-2\rangle\Delta \langle n/2-r+1\rangle\Delta \langle n/2-s+2j-1\rangle=\langle 2j-2\rangle\Delta \langle n/2-r+1\rangle\Delta \langle n/2-s'+1\rangle.$\vskip.2cm\noindent If $r=s',$  then $A=\langle 2j-2\rangle\Delta \langle n/2-r+1\rangle\Delta \langle n/2-s'+1\rangle=\langle 2j-2\rangle,$  a consecutive string and hence, a contradiction by Lemma2.3(1).\vskip.2cm\noindent Suppose $r<s'.$ Then $r<s'+(~2j-2~)=s,$ giving $s-r>2j-2.$ So $B=\langle 2j-2\rangle\Delta \langle r-1\rangle\Delta \langle s-1\rangle =\langle 2j-2\rangle\Delta\{r,\ldots,s-1\}=B_1\cup B_2$(~say~), where $B_1,B_2$ are consecutive strings, such that $B_1$ starts with $1$ and $|B_1|<|B_2|.$ This gives a contradiction by Lemma 2.7(3), as $B$ itself can not be a consecutive string being an element of $H.$ \vskip.2cm\noindent Therefore $s'<r.$ 
We get contradiction to $A \in H$ on the similar lines as above, if $r \geq s.$ So $r<s.$ So the claim is proved.
\vskip.2cm\noindent
By \emph{Claim(b)} above, $n/2-s'+1 > n/2-r+1$. So $\langle n/2-r+1\rangle\Delta \langle n/2-s'+1\rangle=\{n/2-r+2,\ldots,n/2-s'+1\}.$ Similarly for $B,$ $\langle r-1\rangle\Delta \langle s-1\rangle=\{r,\ldots,s-1\}.$ Therefore \begin{center}
\begin{tabular}{lll}
$A$&$=$&$\langle 2j-2\rangle\Delta\{n/2-r+2,\ldots,n/2-s'+1\} ,$\\
$B$&$=$&$\langle 2j-2\rangle\Delta\{r,\ldots,s-1\}.$
\end{tabular}
\end{center}
Here onwards, we say $A_1=\langle 2j-2\rangle=B_1,$ $A_2=\{n/2-r+2,\ldots,n/2-s'+1\}$ and $B_2= \{r,\ldots,s-1\}.$ So $A=A_1\Delta A_2$ and $B=B_1\Delta B_2.$
\vskip.2cm\noindent
\textbf{\emph{Claim(c).}} $2j-2 < n/4.$ \\
Suppose if possible, $2j-2\geq n/4.$ Then $n/4 \in A_1=B_1=\langle 2j-2\rangle.$ So by Lemma 2.5(1), one must have $n/4 \in A_2=\{n/2-r+2,\ldots,n/2-s'+1\} $ and $n/4 \in B_2=\{r,\ldots,s-1\}. $ Now $n/4 \in A_2$ implies $n/4 \geq n/2-r+2,$ giving $r \geq n/4+2.$ But this gives a contradiction to $r \leq n/4,$ as $n/4\in B_2.$ So one must have $2j-2<n/4.$ 
\begin{figure}[h] 
	\begin{tikzpicture} 
	\draw (1,1)--(15,1);
	\draw[fill=black](1,1) circle(.05); 
	\draw[fill=black](15,1) circle(.05);
	\draw[fill=black](8,1) circle(.05);
	\draw[fill=black](6,1) circle(.05);

	\node [below] at (1,1){$1$};
	\node [below] at (15,1){$n/2$};
	\node [below] at (8,1){$n/4$};
	\node [below] at (6,1){$2j-2$};
		\node [below] at (3.5,0.5){$A_1=B_1$};
		\draw [
		    thick,
		    decoration={
		        brace,
		        mirror,
		        raise=0.5cm
		    },
		    decorate
		] (1,1) -- (6,1) ;	
		
	\end{tikzpicture}
\end{figure}     
\vskip.2cm\noindent So far, we got $2 \leq r \leq n/2,$ $2j \leq s \leq n/2,$ $s'=s-(~2j-2~)<r<s$ and $2j-2<n/4,$ where $j \geq 2.$ Now we discuss the cases related to values of $s'$ as follows, which will end the proof.
\vskip.2cm\noindent
\textbf{\emph{Case 1.}} $s'=n/4.$\\
Note that $A=\langle 2j-2\rangle\Delta\{n/2-r+2,\ldots,n/2-s'+1\} \in H,$ where $|A_1|=|\langle 2j-2\rangle|$ is even. So $|A_2|=|\{n/2-r+2,\ldots,n/2-s'+1\}|$ must be even and non-zero. So $n/2-r+2\neq n/2-s'+1.$ Moreover, as $s'=n/4,$ $n/2-s'+1=n/4+1.$ That means $n/4\in A_2.$ Recall that $2j-2<n/4,$  and hence $n/4\notin A_1.$ But this implies that $n/4 \in A,$  a contradiction to $A \in H.$
\vskip.2cm\noindent
\textbf{\emph{Case 2.}} $s'>n/4.$\\
\begin{figure}[h] 
	\begin{tikzpicture} 
	\draw (1,1)--(15,1);
	\draw[fill=black](1,1) circle(.05); 
	\draw[fill=black](15,1) circle(.05);
	\draw[fill=black](8,1) circle(.05);
	\draw[fill=black](6,1) circle(.05);
	\draw[fill=black](9,1) circle(.05);
	\draw[fill=black](11,1) circle(.05);
	\draw[fill=black](14,1) circle(.05);
	\draw[fill=black](13,1) circle(.05);

	\node [below] at (1,1){$1$};
	\node [below] at (15,1){$n/2$};
	\node [below] at (8,1){$n/4$};
	\node [below] at (6,1){$2j-2$};
	\node [below] at (9,1){$s'$};
	\node [below] at (11,1){$r$};
	\node [below] at (14,1){$s$};
	\node [below] at (13,1){$s-1$};
	\node [below] at (3.5,0.5){$A_1=B_1$};
	\draw [
	    thick,
	    decoration={
	        brace,
	        mirror,
	        raise=0.5cm
	    },
	    decorate
	] (1,1) -- (6,1) ;	
	\node [below] at (12,0.5){$B_2$};
			\draw [
			    thick,
			    decoration={
			        brace,
			        mirror,
			        raise=0.5cm
			    },
			    decorate
			] (11,1) -- (13,1) ;

	\end{tikzpicture}
\end{figure}     
As $s'<r<s$ and $2j-2<n/4,$ $B=B_1\cup B_2.$\\
\emph{Claim.} $A=A_1\cup A_2.$\\
As $s'>n/4,$ $n/2-s'+1<n/4+1$ and hence $n/2-s'+1\leq n/4.$ In fact, $n/2-s'+1 < n/4.$ For, if $n/2-s'+1=n/4,$ then $A\in H$ is such that $A\subseteq \langle n/4 \rangle$ and $A$ contains $n/4.$ This is a contradiction by Lemma 2.5(1).  Let $k$ be the largest integer such that the largest element of $A$ is less than $n/2^k.$ So both $n/2^k$ and $n/2^{k+1}$ are not in $A.$ To prove $A=A_1\cup A_2$ is equivalent to prove that $A_1\cap A_2=\emptyset.$ Suppose $A_1\cap A_2\neq\emptyset.$ Then we must have $n/2-r+2\leq 2j-2.$ Depending on $n/2-s'+1$ and nature of $A$ being an element of $H,$ we have following two subcases.\\
\emph{Subcase (i).} $n/2-s'+1<2j-2.$ In this case, $A=\{1,\ldots,n/2-r+1\}\cup\{n/2-s'+2,\ldots,2j-2\}.$ As both $n/2^k$ and $n/2^{k+1}$ are not in $A$ and $A_1$ starts with $1,$ we should have $n/2-r+1<n/2^{k+1}<n/2-s'+2<2j-2<n/2^k.$ Here $n/2-r+1<n/2^{k+1}$ gives $r>n/2-n/2^{k+1}+1=n/4+n/8+\ldots+n/2^{k+1}+1.$ Moreover, $n/2^{k+1}<2j-2$ implies that $B_1=\{1,\ldots,2j-2\}$ will contain $n/2^{k+1}.$ So for $B\in H,$ we must have $n/2^{k+1}+n/2^k+\ldots+n/8+n/4\in B_2$ by Lemma 2.5(2). This gives $r \leq n/4+n/8+\ldots+n/2^{k+1},$ a contradiction.\\
\emph{Subcase (ii).} $n/2-r+2<2j-2<n/2-s'+1.$ In this case, $A=\{1,\ldots,n/2-r+1\}\cup\{2j-1,\ldots,n/2-s'+1\}$ and it satisfies  the inequality $n/2-r+1<n/2^{k+1}\leq 2j-2.$ So again by the same argument as in \emph{Subcase(i)}, we get contradiction.\\
Therefore $A=A_1\cup A_2,$ proving the claim.
\begin{figure}[h] 
	\begin{tikzpicture} 
	\draw (0,1)--(16,1);
	\draw[fill=black](0,1) circle(.05); 
	\draw[fill=black](16,1) circle(.05);
	\draw[fill=black](8.5,1) circle(.05);
	\draw[fill=black](3,1) circle(.05);
	\draw[fill=black](12,1) circle(.05);
	\draw[fill=black](13,1) circle(.05);
	\draw[fill=black](15,1) circle(.05);
	\draw[fill=black](4.5,1) circle(.05);
	\draw[fill=black](7,1) circle(.05);	
	 
	\node [below] at (0,1){$1$};
	\node [below] at (16,1){$n/2$};
	\node [below] at (8.5,1){$n/4$};
	\node [below] at (3,1){$2j-2$};
	\node [below] at (12,1){$s'$};
	\node [below] at (13,1){$r$};
	\node [below] at (15,1){$s-1$};
	\node [above] at (4.5,1){$n/2-r+2$};
	\node [above] at (7,1){$n/2-s'+1$};
	
	\node [below] at (1.5,0.5){$A_1=B_1$};
\draw [
    thick,
    decoration={
        brace,
        mirror,
        raise=0.5cm
    },
    decorate
] (0,1) -- (3,1) ;
\node [below] at (14,0.5){$B_2$};
			\draw [
			    thick,
			    decoration={
			        brace,
			        mirror,
			        raise=0.5cm
			    },
			    decorate
			] (13,1) -- (15,1) ;	
	\node [below] at (5.75,0.5){$A_2$};
				\draw [
				    thick,
				    decoration={
				        brace,
				        mirror,
				        raise=0.5cm
				    },
				    decorate
				] (4.5,1) -- (7,1) ;
	
	\end{tikzpicture}
\end{figure} 
\vskip.2cm\noindent
Recall that both $A$ and $B$ in $H$ are non-empty, $2\leq r \leq n/2,$ $2j \leq s \leq n/2,$ $s-(~2j-2~)=s'<r<s.$ Also, $A=A_1\cup A_2=\{1,\ldots,2j-2\}\cup\{n/2-r+2,\ldots,n/2-s'+1\}$ and $B=B_1\cup B_2=\{1,\ldots,2j-2\}\cup\{r,\ldots,s-1\}.$\\
Let $k$ be the largest such that $A \subseteq \{1,2,\ldots,n/2^k\}.$ So by Lemma 2.5(1), $n/2^{k}$ and $n/2^{k+1}$ both do not belong to $A.$ As $A_1$ starts with $1$ and $A_1\cap A_2=\emptyset,$ we have $2j-2<n/2^{k+1}<n/2-r+2.$ Note that $2j-2=|A_1|>|A_2|=r-s'.$ Therefore we can not have $A_1+n/2^{k+1}=A_2.$ (~Here by $A_1+n/2^{k+1},$ we mean $n/2^{k+1}$ is added to every element of set $A_1.$~) But we wish to get an element in $H,$ whose symmetric difference with $A$ will lead to union of two consecutive strings that differ by a power of $2,$ as in Lemma 2.7(1). 
\begin{figure}[h] 
	\begin{tikzpicture} 
	\draw (0,1)--(16,1);
	\draw[fill=black](0,1) circle(.05); 
	\draw[fill=black](16,1) circle(.05);
	\draw[fill=black](10,1) circle(.05);
	\draw[fill=black](3,1) circle(.05);
	\draw[fill=black](12,1) circle(.05);
	\draw[fill=black](6.5,1) circle(.05);
	\draw[fill=black](7.5,1) circle(.05);	
	 
	\node [below] at (0,1){$1$};
	\node [below] at (16,1){$n/4$};
	\node [above] at (10,1){$n/2-s'+1$};
	\node [below] at (3,1){$2j-2$};
	\node [below] at (12,1){$n/2^k$};
	\node [below] at (6.5,1){$n/2^{k+1}$};
	\node [above] at (7.5,1){$n/2-r+2$};
	
	\node [below] at (1.5,0.5){$A_1=B_1$};
\draw [
    thick,
    decoration={
        brace,
        mirror,
        raise=0.5cm
    },
    decorate
] (0,1) -- (3,1) ;

	\node [below] at (8.75,0.5){$A_2$};
				\draw [
				    thick,
				    decoration={
				        brace,
				        mirror,
				        raise=0.5cm
				    },
				    decorate
				] (7.5,1) -- (10,1) ;
	
	\end{tikzpicture}
\end{figure} 
\vskip.2cm\noindent
Let $S_t=n/2^{k+1}+n/2^{k+2}+\ldots+n/2^{k+t},$ where $t$ be the smallest, such that $A'=A\Delta (~A_2\cup(~A_2-S_t~)~)$ satisfies $n/2-r+2-S_t<n/2^{k+t}<2j-1<n/2-s'+1-S_t.$ So $A'=\{1,\ldots,n/2-r+1-S_t\}\cup\{2j-1,\ldots,n/2-s'+1-S_t\} = A_1'\cup A_2'$(~say~). Again, $|A_1'|=n/2-r'+1-S_t$ and $|A_2'|=n/2-s'+1-S_t-(~2j-1~)+1=n/2-s+1-S_t,$ as $s'=s-(~2j-2~).$ As $r<s,$ $n/2-r+1-S_t>n/2-s+1-S_t,$ giving $|A_1'|>|A_2'|.$ So again $A_1'+n/2^{k+t} \neq A_2'$ and we will have to repeat the above procedure. Let $S_u=n/2^{k+t+1}+n/2^{k+t+2}+\ldots+n/2^{k+t+u},$ where $u$ be the smallest integer such that $A''=A'\Delta(~A_2'\cup (~A_2'-S_u~)~)$ satisfies $2j-1-S_u<n/2-r+1-S_t<n/2-s'+1-S_t-S_u.$ Therefore $A''=\{1,\ldots,2j-2-S_u\}\cup\{n/2-r+1-S_t,\ldots,n/2-s'+1-S_t-S_u\}=A_1''\cup A_2''.$ Again, as $|A_1''|>|A_2''|,$ we can't get $A_1''+n/2^{k+t+u}=A_2''.$ In fact, this continues, and ultimately we will be left with a single consecutive string, giving contradiction to Lemma 2.7(1). So for $s'>n/4,$ we get contradiction.
\vskip.2cm\noindent
\textbf{\emph{Case 3.}} $s'<n/4.$

\begin{figure}[h] 
	\begin{tikzpicture} 
	\draw (0,1)--(16,1);
	\draw[fill=black](0,1) circle(.05); 
	\draw[fill=black](16,1) circle(.05);
	\draw[fill=black](8.5,1) circle(.05);
	\draw[fill=black](3,1) circle(.05);
	\draw[fill=black](11,1) circle(.05);
	\draw[fill=black](13,1) circle(.05);
	\draw[fill=black](15,1) circle(.05);
	\draw[fill=black](4.5,1) circle(.05);
	\draw[fill=black](7,1) circle(.05);	
	 
	\node [below] at (0,1){$1$};
	\node [below] at (16,1){$n/2$};
	\node [below] at (8.5,1){$n/4$};
	\node [below] at (3,1){$2j-2$};
	\node [below] at (11,1){$s'$};
	\node [above] at (12.75,1){$n/2-r+2$};
	\node [above] at (15.5,1){$n/2-s'+1$};
	\node [below] at (4.5,1){$r$};
	\node [below] at (7,1){$s-1$};
	
	\node [below] at (1.5,0.5){$A_1=B_1$};
\draw [
    thick,
    decoration={
        brace,
        mirror,
        raise=0.5cm
    },
    decorate
] (0,1) -- (3,1) ;
\node [below] at (14,0.5){$A_2$};
			\draw [
			    thick,
			    decoration={
			        brace,
			        mirror,
			        raise=0.5cm
			    },
			    decorate
			] (13,1) -- (15,1) ;	
	\node [below] at (5.75,0.5){$B_2$};
				\draw [
				    thick,
				    decoration={
				        brace,
				        mirror,
				        raise=0.5cm
				    },
				    decorate
				] (4.5,1) -- (7,1) ;
	
	\end{tikzpicture}
\end{figure} 
By the similar arguments as given in Case 2 above, and interchanging the roles of sets $A$   and $B,$ we get contradiction.
This completes the proof. (See Appendix for Illustration.)
\end{proof}

Lemma 5.1 proves that the first vertex of the edge $e^1_r$ is different from the first vertex of the edge $e^j_{s_{A\Delta~\theta(~B~)}}.$ In the following lemma, we prove that the end vertices of the edges $e^1_r,e^1_{n/2+r}, e^j_{s_{A\Delta~\theta(~B~)}}, e^j_{n/2+s_{A\Delta~\theta(~B~)}}$ are all distinct.

\begin{lemma}
The edges $e^1_r,e^1_{n/2+r}, e^j_{s_{A\Delta~\theta(~B~)}}, e^j_{n/2+s_{A\Delta~\theta(~B~)}}$ are vertex-disjoint for any $  A, B \in H , $   $ r \in \{1, 2, \ldots, n/2\},$ $s \in \{2j-1, 2j, n/2 + 2j - 2\}$ and $ j \geq 2.$ 
\end{lemma}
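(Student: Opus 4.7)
The plan is to reduce the sixteen endpoint-equalities encoded in the disjointness statement to essentially the single case already settled by Lemma 5.1, by exploiting two structural symmetries of the constructed edges. First, using $\langle n/2\rangle\Delta\overline{\langle k\rangle}=\langle k\rangle$ and $\overline{\theta(X)}=\theta(\overline{X})$, a direct computation shows that each endpoint of $e^1_{n/2+r}$ is the $[n]$-complement of the corresponding endpoint of $e^1_r$, and similarly each endpoint of $e^j_{n/2+s_{A\Delta\theta(B)}}$ is the $[n]$-complement of the corresponding endpoint of $e^j_{s_{A\Delta\theta(B)}}$. Writing $u_i,v_i$ for the endpoints of $e^1_r,e^1_{n/2+r}$ and $x_j,y_j$ for those of the two $e^j$ edges, this gives $v_i=[n]\Delta u_i$ and $y_j=[n]\Delta x_j$. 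Hence $v_i=y_j$ is equivalent to $u_i=x_j$ and $v_i=x_j$ is equivalent to $u_i=y_j$, so the sixteen pairs reduce to eight: four of the form $u_i=x_j$ and four of the form $u_i=y_j$, with $i,j\in\{1,2\}$.

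The four $u_i=x_j$ comparisons are handled by direct reduction to Lemma 5.1. Separating each equation into its $\langle n/2\rangle$-component and its $\theta$-component forces
\[
A=\langle 2j-2\rangle\Delta\langle n/2-r+a\rangle\Delta\langle n/2-s+2j-2+b\rangle,\qquad B=\langle 2j-2\rangle\Delta\langle r-1\rangle\Delta\langle s-1\rangle,
\]
where $(a,b)\in\{1,2\}^2$ records the endpoints being compared. The case $(a,b)=(1,1)$ is exactly Lemma 5.1; the remaining three cases differ only by replacing $r$ with $r-1$ or $s$ with $s+1$ in the expression for $A$, and the three-part case analysis of Lemma 5.1 (on whether $s-(2j-2)$ equals, exceeds, or is less than $n/4$) carries over with one-step adjustments of the inequality bounds in Claims (a), (b), (c). Degenerate boundaries of the shifts collapse $A$ or $B$ to a single consecutive string, directly forbidden by Lemma 2.3(1), and the final contradictions via Lemmas 2.3(1), 2.5(1) and 2.7(1),(3) remain intact.

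For the four $u_i=y_j$ comparisons, the same separation produces
\[
A=\langle n/2\rangle\Delta\langle 2j-2\rangle\Delta\langle n/2-r+a\rangle\Delta\langle n/2-s+2j-2+b\rangle,\quad B=\langle n/2\rangle\Delta\langle 2j-2\rangle\Delta\langle r-1\rangle\Delta\langle s-1\rangle,
\]
each expression picking up one extra $\langle n/2\rangle$-factor relative to the $u_i=x_j$ case. When $s\leq n/2$, none of $\langle 2j-2\rangle,\langle r-1\rangle,\langle s-1\rangle$ contains $n/2$, so $n/2\in B$, contradicting Lemma 2.5(1) immediately. When $s>n/2$ the element $n/2$ in $\langle s-1\rangle$ cancels the extra factor in $B$; rewriting both $A$ and $B$ as unions of two disjoint consecutive strings, Lemma 2.7(3) or (5) rules out $A\in H$ or $B\in H$, mirroring the closing argument of Lemma 5.1. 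The main obstacle is the careful bookkeeping at the crossover $s=n/2+1$ and within the $u_i=y_j$ family generally: the extra $\langle n/2\rangle$-factors create subcases not present in Lemma 5.1, but all such subcases are ultimately controlled by the combinatorial properties of $H$ captured in Lemmas 2.3, 2.5 and 2.7, namely that no element of $H$ is a consecutive string, no element contains $2^{m-1}=n/2$, and any element expressible as two disjoint consecutive strings must satisfy the constraints of Lemma 2.7.
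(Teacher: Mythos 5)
Your proposal follows the paper's proof in its essential architecture: the paper likewise exploits the complementation symmetry $X_3=[n]\Delta X_1,$ $X_4=[n]\Delta X_2,$ $Y_3=[n]\Delta Y_1,$ $Y_4=[n]\Delta Y_2$ (this is the "complements" remark after the edge list in Step 2 of Theorem 3.1) to halve the sixteen equalities, settles $X_1=Y_1$ by Lemma 5.1, settles $X_2=Y_2$ by exactly your shifted substitution $r,s,s'\mapsto r-1,s-1,s'-1$, and disposes of the complemented family by observing that the extra $\langle n/2\rangle$-factor forces $n/2$ into $A$ or $B$ (Lemma 2.5(1)) or collapses one of them to a consecutive string. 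The one genuine divergence is in the two cross cases $X_2=Y_1$ and $X_1=Y_2$: you propose to rerun Lemma 5.1's three-part interval analysis with "one-step adjustments of the inequality bounds," whereas the paper dispatches them with a two-line parity argument — since $|A_1|=2j-2$ is even and $A,B\in G$ have even cardinality, both $|A_2|=|(r-1)-s'|$ and $|B_2|=|s-r|$ must be even, which is impossible because $s$ and $s'=s-(2j-2)$ have the same parity, so $r$ and $r-1$ would have the same parity. Your route could in principle be completed, but it is heavier and riskier than you suggest: the adjustments are not uniformly "one-step" at the boundaries, e.g.\ for $r=1$ in your $(a,b)=(2,\cdot)$ cases the symbol $\langle n/2-r+2\rangle=\langle n/2+1\rangle$ denotes $\{2,\ldots,n/2\}$ under Notation 3.2 rather than an initial segment, so the interval bookkeeping of Claims (a)--(c) changes character exactly where the parity obstruction would have made the case trivial; that obstruction deserves to be stated explicitly rather than absorbed into a claimed transfer of Lemma 5.1. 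A smaller imprecision: in the $s>n/2$ regime of the complemented family you attribute the contradiction to Lemma 2.7(3)/(5), whose hypotheses (a first string of the form $[r]$) need not hold there; the operative facts are Lemma 2.5(1) and the no-consecutive-string property of Lemma 2.3 — which, to your credit, you do name correctly in your closing sentence. In short: same decomposition of the problem and same toolkit as the paper, with the paper's parity shortcut for the mixed-endpoint cases being both simpler and safer than the transferred case analysis you sketch.
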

\begin{proof} 
Being edges in the matching $M_1,$ $e^1_r,e^1_{n/2+r}$ are vertex-disjoint. Similarly, the edges $e^j_{s_{A\Delta~\theta(~B~)}}, e^j_{n/2+s_{A\Delta~\theta(~B~)}}$ are vertex-disjoint. We need to prove that the end vertices of the edges $e^1_r,e^1_{n/2+r}, e^j_{s_{A\Delta~\theta(~B~)}}, e^j_{n/2+s_{A\Delta~\theta(~B~)}}$ are all distinct. For convenience, we call $e^1_r=(~X_1,X_2~),$ $ e^1_{n/2+r} = (~X_3,X_4~), e^j_{s_{A\Delta~\theta(~B~)}}= (~Y_1,Y_2~)$ and $e^j_{n/2+s_{A\Delta~\theta(~B~)}}=(~Y_3,Y_4~).$ Therefore we have to prove that $X_p \neq Y_q,$ for all $p,q=1,2,3,4.$ 

We first prove that $Y_1 \neq X_p$ for any $p \in \{1,2,3,4\}.$

 By Lemma 5.1, $Y_1 \neq X_1.$ 

Suppose $Y_1=X_2.$ Then $\overline{\langle n/2-r+2\rangle }\Delta~\theta(~\langle r-1\rangle ~) = A\Delta\langle 2j-2\rangle\Delta\overline{ \langle n/2-s+2j-1\rangle}\Delta$ $ \theta(~\langle s-1\rangle\Delta \langle 2j-2\rangle \Delta B~)$ for some $  A, B \in H , $   $ r \in \{1, 2, \ldots, n/2\}$ $s \in \{2j-1, 2j,\ldots, n/2 + 2j - 2\}$ and $ j \geq 2.$ Therefore $A=\langle 2j-2\rangle\Delta\overline{ \langle n/2-r+2\rangle}\Delta\overline{ \langle n/2-s'+1\rangle },  B=\langle 2j-2\rangle\Delta \langle r-1\rangle \Delta\langle s-1\rangle .$ Let $A_1=\langle 2j-1\rangle ,$ $A_2=\overline{\langle n/2-r+2\rangle }\overline{\langle n/2-s'+1\rangle}$ and $B_2=\langle r-1\rangle\Delta \langle s-1\rangle .$ Then $A$ is the symmetric difference of $A_1,A_2,$ and $B$ is the symmetric difference of $A_1,B_2.$ Since $|A|, |B|$ and$|A_1|$ are even, $|A_2|=|(~r-1~)-s'|$ and $|B_2|=|s-r|$ are also even. Therefore $s'$ and $r-1$ have same the parity and also, $r$ and $s$ have the same parity. As $s=s'+2j-2,$ $s$ and $s'$ have the same parity. Hence $r$ and $r-1$ have the same parity, a contradiction. Therefore $Y_1 \neq X_2.$

Suppose $Y_1=X_3.$ Then $\langle n/2-r+1\rangle \Delta~\theta(~\overline{\langle r-1\rangle }~) =  A\Delta\langle 2j-2\rangle\Delta\overline{ \langle n/2-s+2j-1\rangle }\Delta$ $\theta(~\langle s-1\rangle\Delta \langle 2j-2\rangle \Delta B~).$ Consequently, $A=\langle 2j-2\rangle\Delta \langle n/2-r+1\rangle\Delta\overline{ \langle n/2-s+2j-1\rangle }$  and $ B=\langle 2j-2\rangle\Delta\overline{ \langle r-1\rangle}\Delta \langle s-1\rangle .$ If $2j-1 \leq s \leq n/2,$ then $B$ contains $n/2,$ and if $n/2+1 \leq s \leq n/2+2j-2,$ then $A$ contains $n/2,$ a contradiction by Lemma 2.5(1). Therefore $Y_1 \neq X_3.$

Suppose $Y_1=X_4.$ Then $\langle n/2-r+2\rangle \Delta~\theta(~\overline{\langle r-1\rangle }~)= A\Delta\langle 2j-2\rangle\Delta\overline{ \langle n/2-s+2j-1\rangle }\Delta$ $\theta(~\langle s-1\rangle\Delta \langle 2j-2\rangle \Delta B~).$ Hence $A=\langle 2j-2\rangle\Delta \langle n/2-r+2\rangle\Delta\overline{ \langle n/2-s+2j-1\rangle }, $ and $ B=\langle 2j-2\rangle\Delta\overline{ \langle r-1\rangle}\Delta \langle s-1\rangle .$ If $2j-1 \leq s \leq n/2+1,$ then $B$ is a consecutive string or contains $n/2,$ and if $n/2+2 \leq s \leq n/2+2j-2,$ then $A$ is a consecutive string or contains $n/2,$ a contradiction. Therefore $Y_1 \neq X_4.$

Thus we have proved that $Y_1 \neq X_p$ for any $p \in \{1,2,3,4\}.$

 Note that $X_3=Y_3$ gives $A=\langle 2j-2\rangle\Delta \langle n/2-r+1\rangle\Delta \langle n/2-s+2j-1\rangle  = \langle 2j-2\rangle\Delta\overline{ \langle n/2-r+1\rangle }$ $\Delta\overline{\langle n/2-s+2j-1\rangle }$ and $B=\langle 2j-2\rangle\Delta\overline{ \langle r-1\rangle }\Delta\overline{\langle s-1\rangle }=\langle 2j-2\rangle\Delta \langle r-1\rangle\Delta \langle s-1\rangle $ by definition of $\overline{\langle i\rangle }.$ But this implies $X_1=Y_1,$ a contradiction by Lemma 5.1. Similarly, $X_4=Y_4$ implies $X_2=Y_2.$ But $X_2=Y_2$ implies $\overline{\langle n/2-r+2\rangle }\Delta~\theta(~\langle r-1\rangle ~) = A\Delta\langle 2j-2\rangle \Delta\overline{\langle n/2-s+2j\rangle }\Delta~\theta(~\langle s-1\rangle\Delta$ $  \langle 2j-2\rangle \Delta B~).$ So $A=\langle 2j-2\rangle\Delta\overline{ \langle n/2-r+2\rangle }\Delta\overline{\langle n/2-s+2j\rangle }$ and $B=\langle 2j-2\rangle\Delta  \langle r-1\rangle\Delta \langle s-1\rangle .$ This gives a contradiction by replacing the arguments for $r,s$ and $s'$ by $r-1,s-1$ and $s'-1,$ respectively for $A$ and making corresponding changes in Lemma 5.1. Therefore $X_p \neq Y_p, p \in \{2,3,4\}.$

Proof of the case $X_1 \neq Y_2$ follows from that of the case $X_2 \neq Y_1,$ replacing the arguments for $r,s$ and $s'$ by $r-1,s-1$ and $s'-1,$ respectively. Moreover, $X_3=Y_4$ implies $X_1=Y_2$ and $X_4=Y_3$ implies $X_2=Y_1,$ a contradiction.

On the similar lines as above,  $X_3 \neq Y_1$ implies $X_1 \neq Y_3,$ $X_4 \neq Y_2$ and $X_2 \neq Y_4.$  While, $X_4 \neq Y_1$ implies $X_1 \neq Y_4,$ $X_3 \neq Y_2$ and $X_2 \neq Y_3.$

Hence, the edges $e^1_r,e^1_{n/2+r}, e^i_s, e^j_{n/2+s}$ are all vertex-disjoint.

\end{proof}

\begin{lemma}
$ M=M_1\cup M_2\cup\ldots \cup M_{n/4}$ is a matching in $Q_n.$
\end{lemma}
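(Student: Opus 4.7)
The plan is to reduce the claim to verifying vertex-disjointness of $M_i \cup M_j$ for every pair $1 \leq i \leq j \leq n/4$, from which the lemma follows by taking unions.

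First I would observe that each $M_i$ is itself a matching. By Lemmas 2.4 and 2.8, the $n^2$-cycles $\Phi^i_{AB}$ making up $F_i$ are mutually vertex-disjoint, and within each such cycle the $n$ edges we selected are equally spaced along the cycle by condition (I) of Theorem 3.1, so they share no vertex.

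For the cross-term $M_1 \cup M_j$ with $j \geq 2$, the required vertex-disjointness is exactly the content of Lemma 5.2. It remains to handle $M_i \cup M_j$ with $1 < i < j \leq n/4$. Here I would imitate the proof of Lemma 5.1. Equating the first endpoint of a representative edge $e^i_{r_{A_1\theta(B_1)}} \in M_i$ with the first endpoint of $e^j_{s_{A_2\theta(B_2)}} \in M_j$ and splitting into the portion contained in $\{1,\ldots,n/2\}$ and the portion in $\theta(\{1,\ldots,n/2\})$, one obtains
\[
A \;=\; \langle 2i-2\rangle\Delta\langle 2j-2\rangle\Delta\langle n/2-r+2i-1\rangle\Delta\langle n/2-s+2j-1\rangle,
\]
\[
B \;=\; \langle 2i-2\rangle\Delta\langle 2j-2\rangle\Delta\langle r-1\rangle\Delta\langle s-1\rangle,
\]
where $A = A_1 \Delta A_2 \in H$ and $B = B_1 \Delta B_2 \in H$. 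Since $\langle 2i-2\rangle\Delta\langle 2j-2\rangle = \{2i-1,2i,\ldots,2j-2\}$ is a single consecutive string, each of $A$ and $B$ simplifies to the symmetric difference of three consecutive strings, and hence is either empty, a single consecutive string, or a union of two disjoint consecutive strings. In each case the parity bookkeeping and size comparisons used in Lemma 5.1, together with the structural facts about $H$ from Lemmas 2.3(2), 2.5, and 2.7, produce a contradiction. The remaining three combinations of endpoints (first-with-second, first-with-third, first-with-fourth, and so on) are eliminated by the parity and complement arguments of Lemma 5.2, which depend only on $|A|,|B|$ being even and on $\overline{X}=\langle n/2\rangle\Delta X$.

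The main obstacle is the additional bookkeeping caused by the shifted anchor: in Lemma 5.1 the anchoring consecutive string was $\langle 2j-2\rangle$ (always starting at $1$), whereas here it is replaced by the non-initial block $\{2i-1,\ldots,2j-2\}$. Because the forbidden-configuration lemmas for $H$ (Lemmas 2.3(2), 2.5, 2.7) are stated for arbitrary consecutive strings, the cascading argument of Lemma 5.1 carries through, but one must re-examine each subrange of $r$ and $s$ to confirm that the shifted anchor does not create a new admissible coincidence. Once this is verified, no equality of endpoints between edges of $M_i$ and $M_j$ is possible, so $\mathcal{M}$ is a matching.
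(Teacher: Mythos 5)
Your setup (each $M_i$ is a matching because the cycles of $F_i$ are mutually vertex-disjoint; the case $M_1\cup M_j$ is delegated to Lemma 5.2) agrees with the paper, but your treatment of the remaining case $1<i<j\leq n/4$ has a genuine gap, and it is exactly the point you yourself flag as the ``main obstacle.'' After equating first endpoints you correctly obtain $A=A_1\Delta A_2\in H$ and $B=B_1\Delta B_2\in H$ that are symmetric differences of the shifted anchor $\langle 2i-2\rangle\Delta\langle 2j-2\rangle=\{2i-1,2i,\ldots,2j-2\}$ with another consecutive string, i.e.\ unions of two disjoint consecutive strings neither of which need contain $1$. But the non-membership results you then invoke do not cover this situation: Lemmas 2.7(3), 2.7(4) and 2.7(5) all hypothesize that one of the two strings is an initial segment $[r]$ (or $[2^k]$), and that hypothesis is precisely what powers the cascading argument of Lemma 5.1, where the anchor $\langle 2j-2\rangle$ does start at $1$. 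Your assertion that the forbidden-configuration lemmas ``are stated for arbitrary consecutive strings'' is false, and no such exclusion could be true in that generality: $H$ contains many unions of two non-initial strings --- already for $n=8$ one has $\{3,5\},\{2,6\},\{5,7\},\{3,7\}\in H$. So no contradiction follows from the two-string shape of $A$ and $B$ alone; whether the specific parameters $r,s,i,j$ can realize an element of $H$ requires a genuinely new case analysis, and your phrase ``once this is verified'' defers exactly the hard content rather than supplying it.

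The paper avoids this computation altogether: its proof of Lemma 5.3 is an automorphism (coset-translation) reduction rather than a direct endpoint calculation. Since $F_i=\sigma_{\langle 2i-2\rangle\Delta\theta(\langle 2i-2\rangle)}(F_1)$ for each $i$, one has $F_j=\sigma_D(F_i)$ with $D=\{2i-1,\ldots,2j-2\}\Delta\theta(\{2i-1,\ldots,2j-2\})$, and the non-initial string $\{2i-1,\ldots,2j-2\}$ is absorbed by the coset structure: it lies in $H_k$ for some $k$, so ``$F_i$ and $F_j$ differ by $F_k$ or $F_k'$.'' Consequently, a shared vertex between an edge of $M_i$ and an edge of $M_j$ would be transported by this automorphism of $Q_n$ to a shared vertex involving $M_1$ and $M_k$ or $M_k'$, contradicting Lemma 5.2 (and Lemma 5.5). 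In other words, the paper spends its computational effort once, on the base case $i=1$ in Lemmas 5.1 and 5.2, and handles all other pairs by symmetry; to salvage your direct route you would need new analogues of Lemma 2.7(3)--(5) for anchors not containing $1$, which, in view of the examples above, cannot depend on the shape of the sets alone.
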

\begin{proof}
We have already proved that $M_1\cup M_j$ is a matching in Lemma 5.2. Now for $i,j\in\{2,3,\ldots,n/4\}$ and $i<j$ we need prove that $M_i \cup M_j$ is a matching. From Lemma 2.3(3), recall that  $H_i=\langle 2i-2\rangle H, i=1,2,\ldots,n/4$ are cosets of $H,$ which partition group $(~G;\Delta~)$ of even subsets of $\langle n/2-1\rangle .$ Moreover, by Remark 2.11, $H_i \times H_i$ is a coset of $G\times G$ and is given by $H_i\times H_i=\langle 2i-2\rangle \Delta~\theta(~\langle 2i-2\rangle ~)(~H_1\times H_1~).$ By Step 1 in the proof of Theorem 3.1, $W_1=\displaystyle\biguplus_{A\in H_1}C(~A,S~)$ is a 2-regular spanning subgraph of $Q_{n/2}$ formed by the union of vertex-disjoint $n$-cycles having edge-direction sequence $S=(~1,2,\ldots,n/2,1,2,\ldots,n/2~)$ and their initial vertices belong to the subgroup $H=H_1$ of $G.$ As $H_i=\langle 2i-2\rangle H_1,$ $W_i=\sigma_{\langle 2i-2\rangle }(~W_1~).$ Therefore $W_i\Box W_i=\sigma_{\langle 2j-2\rangle \Delta~\theta(~\langle 2j-2\rangle ~)}(~W_1 \Box W_1~).$ But $W_i \Box W_i = F_i \sqcup F_i'$ for all $i \in \langle n/4\rangle .$ Therefore $F_i\sqcup F_i' = \sigma_{\langle 2i-2\rangle \Delta~\theta(~\langle 2i-2\rangle ~)}(~F_1\sqcup F_1'~).$ Similarly $F_j \sqcup F_j' = \sigma_{\langle 2j-2\rangle \Delta~\theta(~\langle 2j-2\rangle ~)}(~F_1 \sqcup F_1'~).$ In fact, $F_i = \sigma_{\langle 2i-2\rangle \Delta~\theta(~\langle 2i-2\rangle ~)}(~F_1~),$ and $F_i' = \sigma_{\langle 2i-2\rangle \Delta~\theta(~\langle 2i-2\rangle ~)}(~F_1'~),$ for all $i \leq n/4.$ Now $i<j$ implies $ \langle 2j-2\rangle =\langle 2i-2\rangle\Delta\{2i-1,2i,\ldots,2j-2\}.$ Therefore $F_j=\sigma_{\langle 2j-2\rangle \Delta~\theta(~\langle 2j-2\rangle ~)}(~F_1~)$ $=\sigma_{\{2i-1,\ldots,2j-2\}\Delta~\theta(~\{2i-1,\ldots,2j-2\}~)}~o~\sigma_{\langle 2i-2\rangle \Delta~\theta(~\langle 2i-2\rangle ~)}(~F_1~)$ $=\sigma_{\{2i-1,\ldots,2j-2\}\Delta~\theta(~\{2i-1,\ldots,2j-2\}~)}(~F_i~).$ Now $\{2i-1,2i,\ldots,2j-2\}\in H_k,$ for some $k\in \{1,2,\ldots,n/4\}.$ Therefore $\{2i-1,2i,\ldots,2j-2\}\Delta~\theta(~\{2i-1,2i,\ldots,2j-2\}~) \in (~F_k\sqcup F_k'~) = \sigma_{\langle 2k-2\rangle \Delta~\theta(~\langle 2k-2\rangle ~)}(~F_1\sqcup F_1'~).$ So one can say that $F_i, F_j$ differ by $F_k$ or $F_k'.$ Now $M_i$ and $M_j$ are the matchings selected from the cycles in $F_i$ and $F_j$ respectively. It suffices to prove that $M_1 \cup M_i$ and $M_1 \cup M_j$ are matchings for showing that $M_i \cup M_j$ is a matching. For, if $M_i \cup M_j$ is not a matching, then $M_1 \cup M_k \cup M_k'$ will not be a matching, which contradicts Lemma 5.2 and Lemma 5.5.
\end{proof}

We now prove that $M' = M_1' \cup M_2' \cup \ldots \cup M_{n/4}',$ where $M_i'$ is the collection of the edges selected from the cycles of $F_i'$ in Section 3, is a matching. 

\begin{lemma}
 $M'=M_1'\cup M_2'\cup\ldots \cup M_{n/4}'$ is a matching in $Q_n.$
\end{lemma}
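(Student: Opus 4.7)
The plan is to prove Lemma 5.5 by imitating the argument of Lemma 5.3 (together with the auxiliary Lemmas 5.1 and 5.2 on which it rests), with the edges $e^i_{r_{A\Delta \theta(B)}},\ e^i_{n/2+r_{A\Delta \theta(B)}}$ of $M_i$ replaced throughout by the edges $f^i_{r_{A\Delta \theta(B)}},\ f^i_{n/2+r_{A\Delta \theta(B)}}$ of $M_i'$. The key observation is that the explicit formulas for the $f$-edges are obtained from those for the $e$-edges by swapping the ``base'' subset of $\langle n/2 \rangle$ with the ``$\theta$-part'', and exchanging $\langle k\rangle$ with $\overline{\langle k\rangle}=\langle n/2+k\rangle$ in the appropriate slots. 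This reflects the coordinate-swap automorphism of $Q_n = Q_{n/2}\,\Box\, Q_{n/2}$ that interchanges the two factors, sending $F_i$ onto $F_i'$; the asymmetric $n/2$-offset built into the selection rule for $M_i'$ is precisely what makes the argument carry over cleanly.

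The first step is to establish an analog of Lemma 5.1: the first endpoint of a selected edge in the cycle $\Gamma^1_{\emptyset}$ (contributing to $M_1'$) cannot coincide with the first endpoint of a selected edge in any cycle $\Gamma^j_{A\Delta\theta(B)}$ (contributing to $M_j'$) for $j\geq 2$ and $A,B\in H$. Separating the ``base'' coordinates lying in $\langle n/2\rangle$ from the ``$\theta$''-coordinates lying in $\theta(\langle n/2\rangle)$, such an equality would force two relations in $H$ of the form $A=\langle 2j-2\rangle\Delta\langle n/2-r+1\rangle\Delta\langle n/2-s+2j-1\rangle$ and $B=\langle 2j-2\rangle\Delta\langle r-1\rangle\Delta\langle s-1\rangle$, up to the $n/2$-shift in the selection index. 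These are, modulo the base/$\theta$ swap, exactly the equations encountered in the proof of Lemma 5.1, and they are refuted by the same case analysis invoking Lemmas 2.3(1), 2.5(1), 2.7(1) and 2.7(3). An analog of Lemma 5.2 then follows by the parity and complementation arguments identical to those in the proof of Lemma 5.2, showing that all four endpoints of $f^1_r,\ f^1_{n/2+r},\ f^j_{s_{A\Delta\theta(B)}},\ f^j_{n/2+s_{A\Delta\theta(B)}}$ are pairwise distinct.

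Combining these, the argument of Lemma 5.3 applies verbatim: the coset structure of $H_i\times H_i$ inside $G\times G$ described in Remark 2.11, together with the action of $\sigma_{\langle 2i-2\rangle\Delta\theta(\langle 2i-2\rangle)}$, reduces the pairwise disjointness of $M_i'$ and $M_j'$ for arbitrary $1\leq i<j\leq n/4$ to the case $i=1$ already handled. Hence $M'=M_1'\cup M_2'\cup\cdots\cup M_{n/4}'$ is a matching in $Q_n$.

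The main obstacle will be the careful index book-keeping: the $n/2$-offset in the selection rule for $M_i'$ must be tracked through every step so that the combinatorial contradictions extracted from Lemmas 2.5 and 2.7 line up exactly as in Lemma 5.1. Once this offset is correctly absorbed into the parameters $r,s,s'$ and the claims $2\leq r\leq n/2$, $s-(2j-2)<r<s$, and $2j-2<n/4$ that drive the proof of Lemma 5.1, the combinatorial core of the argument is identical and no new structural input is needed.
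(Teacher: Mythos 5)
Your proposal follows the paper's own proof essentially verbatim: the paper likewise reduces, via the vertex-disjointness of the cycles in each $F_i'$ and the same coset/automorphism argument used for Lemma 5.3, to comparing the edges selected from $\Gamma^1_\emptyset$ with those from an arbitrary cycle of $F_j'$, and then notes that equating first endpoints yields precisely the Lemma 5.1 system with the roles of $A$ and $B$ interchanged (harmless, since both range over $H$), the remaining endpoint coincidences being ruled out as in Lemma 5.2. Your ``base/$\theta$ swap'' observation is exactly the mechanism the paper exploits, so the two arguments coincide.
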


\begin{proof}
The cycles of $F_i'$ are mutually vertex-disjoint and $M_i'$ contains the edges selected from each cycle of $F_i'$ satisfying condition (~I~) of Theorem 3.1. Therefore $M_i'$ is a matching for each $i=1,2,\ldots,n/4.$ As in the proof of the above lemma, for $M'$ to be a matching, it suffices to prove that $M_1'\cup M_j'$ is a matching. Let $2 \leq j \leq n/4.$ To prove $M_1'\cup M_j'$ is a matching, it is enough to prove that any edge in $M_1'$ selected from the first cycle $\Gamma^1_\emptyset=\Gamma(~\emptyset,\Delta~\theta(~\mathcal S~)~)$ in $F_1'$ is vertex disjoint with any edge of $M_j'$ selected from an arbitrary cycle in $F_j'.$ 

Recall that $M_1'$ contains the following $n$ edges of the cycle $\Gamma^1_\emptyset=\Gamma(~\emptyset,\Delta~\theta(~\mathcal S~)~):$ \\
\begin{tabular}{lll}
$f^1_r$&$=$&$ (~\langle r-1\rangle \Delta~\theta(~\langle n/2-r+1\rangle ~), \langle r-1\rangle \Delta~\theta(~\langle n/2-r+2\rangle ~)~), $\\
$f^1_{n/2+r}$&$=$&$ (~\overline{\langle r-1\rangle }\Delta~\theta(~\overline{\langle n/2-r+1\rangle }~),\overline{\langle r-1\rangle }\Delta~\theta(~\overline{\langle n/2-r+2\rangle }~)~),$\\&& where $r=1,2,\ldots,n/2$
\end{tabular}\\

Any cycle in $F_j'$ is $\Gamma^j_{A\Delta~\theta(~B~)}=\Gamma(~A\Delta\langle 2j-2\rangle \Delta~\theta(~\langle 2j-2\rangle \Delta B~)~)$ for some  $A, B \in H.$ 
The $n$ edges of this cycle that belong to $M_j'$ are as follows.\\
\begin{tabular}{lll}
$f^j_{s_{A\Delta~\theta(~B~)}}$&$=$&$ (~A\Delta\langle 2j-2\rangle\Delta \langle s-1\rangle \Delta~\theta(~\langle 2j-2\rangle\Delta \langle n/2-s+2j-1\rangle \Delta B~),$\\&&$A\Delta\langle 2j-2\rangle\Delta \langle s-1\rangle \Delta~\theta(~\langle 2j-2\rangle\Delta \langle n/2-r+2j\rangle \Delta B~)~), $\\
$f^j_{n/2+s_{A\Delta~\theta(~B~)}}$&$=$&$ (~A\Delta\langle 2j-2\rangle\Delta\overline{ \langle s-1\rangle }\Delta~\theta(~\langle 2j-2\rangle\Delta\overline{ \langle n/2-s+2j-1\rangle }\Delta B~),$\\&&$A\Delta\langle 2j-2\rangle\Delta\overline{ \langle s-1\rangle }\Delta~\theta(~\langle 2j-2\rangle\Delta\overline{ \langle n/2-s+2j\rangle }\Delta B~)~), $\\&&where $s=2j-1,2j,\ldots,n/2+2j-2.$
\end{tabular}\\

 Thus we need to prove only that the edges $f^1_r,f^1_{n/2+r}, f^j_{s_{A\Delta~\theta(~B~)}}, f^j_{n/2+s_{A\Delta~\theta(~B~)}}$ are all vertex-disjoint for any $1 \leq r \leq n/2,$ $2j-1 \leq s \leq n/2+2j-2$ and $A,B \in H$.\\

Suppose the first vertex $\langle r-1\rangle \Delta~\theta(~\langle n/2-r+1\rangle ~)$ of $f^1_r$ and the first vertex $A\Delta\langle 2j-2\rangle $ $\langle s-1\rangle \Delta~\theta(~\langle n/2-s+2j-1\rangle\Delta \langle 2j-2\rangle \Delta B~)$ of $f^j_{s_{A\Delta~\theta(~B~)}}$ are same. Then, by the definition of symmetric difference and by noting that $C \cap \Delta~\theta(~D~)=\emptyset$ for any $C, D \subset \langle n/2\rangle ,$ $$A=\langle 2j-2\rangle\Delta \langle r-1\rangle\Delta \langle s-1\rangle ,$$ $$B=\langle 2j-2\rangle\Delta \langle n/2-r+1\rangle\Delta \langle n/2-s+2j-1\rangle .$$ Then it follows from the proof of Lemma 5.1 that $A \notin H$ or $B \notin H,$ a contradiction.

Similarly, as in the proof of Lemma 5.2, we see that any vertex of $f^1_r$ or $f^1_{n/2+r}$ is different from any vertex of both $f^j_{s_{A\Delta~\theta(~B~)}}$ and $f^j_{n/2+s_{A\Delta~\theta(~B~)}}.$ 

Therefore $M'$ is a matching.

\end{proof}

\begin{lemma}
$\mathcal M=M \cup  M'$ forms a perfect matching of $Q_n.$
\end{lemma}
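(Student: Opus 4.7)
The plan is to combine Lemmas~5.3 and 5.4 with a cardinality count and a cross-disjointness argument. First I would verify the count: by condition (I) of Theorem~3.1, each of the $r=2^{n-1-m}$ cycles in the decomposition contributes exactly $2^m=n$ edges to $\mathcal M$, so $|\mathcal M|=n\cdot 2^{n-1-m}=2^{n-1}=|V(Q_n)|/2$. Consequently, once $\mathcal M$ is shown to be a matching, it is automatically perfect, and no further covering argument is needed.

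By Lemmas~5.3 and 5.4, $M$ and $M'$ are each matchings, so it suffices to prove that no edge $e\in M$ shares an endpoint with any edge $f\in M'$. Since the cycles in $F_i$ are mutually vertex-disjoint (and similarly for $F_j'$), and since the translation argument used at the end of the proof of Lemma~5.3 reduces an arbitrary pair of cosets to the base case, I can restrict attention to edges $e\in\{e^i_{r_{A\theta(B)}},\, e^i_{n/2+r_{A\theta(B)}}\}$ taken from the cycle $\Phi^i_{A\Delta\theta(B)}$ of $F_i$ and edges $f\in\{f^j_{s_{A'\theta(B')}},\, f^j_{n/2+s_{A'\theta(B')}}\}$ taken from $\Gamma^j_{A'\Delta\theta(B')}$ of $F_j'$, for all $A,B,A',B'\in H$, all admissible $r,s$, and all $i,j\in\{1,2,\ldots,n/4\}$.

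The key structural observation is that the endpoints of every edge in $M$ have the form $C_1\Delta\theta(C_2)$ where exactly one of $C_1,C_2$ is a complemented set $\overline{\langle\cdot\rangle}$ and the other is a straight set $\langle\cdot\rangle$ (shifted by $A,B,\langle 2i-2\rangle$), while endpoints of every edge in $M'$ have both $C_1,C_2$ of the same ``parity'' (both straight or both complemented). Equating a vertex of $e$ with a vertex of $f$ and splitting into its $\langle n/2\rangle$-part and its $\theta$-part (exactly as in the derivation in Lemma~5.1) produces two simultaneous equations of the shape
\[
A\Delta A' \;=\;\langle 2i-2\rangle\Delta\langle 2j-2\rangle\Delta X_1\Delta X_2,\qquad B\Delta B'\;=\;\langle 2i-2\rangle\Delta\langle 2j-2\rangle\Delta Y_1\Delta Y_2,
\]
where $X_1,X_2,Y_1,Y_2$ are either sets of the form $\langle\cdot\rangle$ or $\overline{\langle\cdot\rangle}$ (depending on which pairing of endpoints is being tested). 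The left-hand sides lie in $H$ (since $H$ is a subgroup), so I must show that the right-hand side is forced to violate membership in $H$. Using $\overline{\langle k\rangle}=\langle n/2\rangle\Delta\langle k\rangle$ to convert every complemented string to a straight one, each right-hand side becomes a symmetric difference of at most four prefix strings, which simplifies to either a single consecutive string, a union of two disjoint consecutive strings with prescribed lengths, or a set containing $n/2^k$ with no ``offset partner'' at a higher power of $2$. These are precisely the configurations ruled out by Lemma~2.3(2), Lemma~2.5(1)--(2), and Lemma~2.7(3)--(5); iterating the reduction scheme of Cases~1--3 in the proof of Lemma~5.1 closes every such possibility.

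I expect the main obstacle to be the bookkeeping for the sixteen pairings $(e,f)$ that arise from the four-choice-times-four-choice of endpoints, compounded by the distinction between $i<j$, $i=j$, and $i>j$, and by the shifts $\langle 2i-2\rangle,\langle 2j-2\rangle$. However, once a single representative pairing is dispatched (analogously to how Lemma~5.1 handles the first-vertex case and Lemma~5.2 propagates it to all four sub-cases by interchanging $r\leftrightarrow r-1$, $s\leftrightarrow s-1$, and by complementing), the remaining cases will follow by the same mechanism with $r,s,s'$ replaced by $r{-}1,s{-}1,s'{-}1$ or by passing to complements inside $\langle n/2\rangle$. Combining this with the cardinality observation at the start completes the proof that $\mathcal M$ is a perfect matching in $Q_n$.
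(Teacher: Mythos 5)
Your proposal follows essentially the same route as the paper's own proof: you invoke Lemmas~5.3 and 5.4, reduce the cross-disjointness of $M$ and $M'$ to representative pairs of cycles (edges of $\Phi^1_{\emptyset\emptyset}$ versus edges of an arbitrary $\Gamma^j_{A\Delta\theta(B)}$), equate endpoints and split into the $\langle n/2\rangle$-part and the $\theta$-part to obtain equations forcing explicit prefix-string combinations to lie in $H$, which are then ruled out by Lemma~2.3(2), Lemma~2.5 and Lemma~2.7 via the reduction scheme of Lemma~5.1, and you close with the count $|\mathcal M|=2^{n-1}$, exactly as the paper does. Your parity observation (one straight and one complemented half for endpoints of $M$-edges, uniform halves for $M'$-edges) is precisely the mechanism underlying the paper's parity contradictions in its case analysis, so the two arguments coincide in substance.
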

\begin{proof}
By Lemma 5.3 and Lemma 5.4, $M$ and $M'$ are the matchings in $Q_n.$ The edges of $M$ and $M'$ are selected from the cycles in $F_i$'s and $F_i'$'s respectively. Both $M$ and $M'$ satisfy condition (~I~). Note that $F_i,F_i'$ are both $2$-regular, spanning subgraphs of $Q_n.$ Edge direction sequence of cycles in $F_i$ is $\mathcal S,$ while the edge direction sequence of cycles in $F_i'$ is $\Delta~\theta(~\mathcal S~).$  Also, vertex set of the cycle $\Phi^i_{A\Delta~\theta(~B~)}=\Phi(~A\Delta\langle 2i-2\rangle \Delta~\theta(~B\Delta\langle 2i-2\rangle ~),\mathcal S~)$ in $F_i$ is same as the vertex set of the cycle $\Gamma^i_{A\Delta~\theta(~B~)}=\Gamma(~A\Delta\langle 2i-2\rangle \Delta~\theta(~B\Delta\langle 2i-2\rangle ~),\Delta~\theta(~\mathcal S~)~)$ in $F_i',$ for any $A,B \in H.$ So while selecting the edges from a cycle $\Phi^i_{A\Delta~\theta(~B~)}$ in $F_i,$ we start with $2i-1^{st}$ edge and that from a cycle $\Gamma^i_{A\Delta~\theta(~B~)}$ in $F_i'$ we start with $n/2+2i-1^{st}$ edge. This selection assures the vertex-disjointness of the edges selected from $\Phi^i_{A\Delta~\theta(~B~)}$ with those selected from $\Gamma^i_{A\Delta~\theta(~B~)}.$ Moreover, for fixed $i$ the cycles in $F_i$ are all vertex-disjoint. Same is the situation for the cycles in $F_i'.$ Therefore it suffices to prove that the edges selected from a cycle $\Phi^1_{\emptyset\emptyset}=\Phi(~\emptyset,\mathcal S~)$ in $F_1$ are vertex-disjoint with the edges selected from an arbitrary cycle $\Gamma^j_{A\Delta~\theta(~B~)}=\Gamma(~A\Delta\langle 2j-2\rangle \Delta~\theta(~B\Delta\langle 2j-2\rangle ~),\Delta~\theta(~\mathcal S~)~)$ in $F_j',$ $j \in \{2,\ldots,n/4\}.$

Now the edges selected from $\Phi^1_\emptyset$ and  $\Gamma^j_{A\Delta~\theta(~B~)}$ are as follows.\\
\begin{tabular}{lll}
$e^1_r$&$=$&$(~\overline{\langle n/2-r+1\rangle }\Delta~\theta(~\langle r-1\rangle ~),\overline{\langle n/2-r+2\rangle }\Delta~\theta(~\langle r-1\rangle ~)~),$\\
$e^1_{n/2+r}$&$=$&$(~\langle n/2-r+1\rangle \Delta~\theta(~\overline{\langle r-1\rangle }~),\langle n/2-r+2\rangle \Delta~\theta(~\overline{\langle r-1\rangle }~)~),$\\&& where $r=1,2,\ldots,n/2.$\\
$f^j_{s_{A\Delta~\theta(~B~)}}$&$=$&$ (~A\Delta\langle 2j-2\rangle\Delta \langle s-1\rangle \Delta~\theta(~\langle 2j-2\rangle\Delta \langle n/2-s+2j-1\rangle \Delta B~),$\\&&$A\Delta\langle 2j-2\rangle\Delta \langle s-1\rangle \Delta~\theta(~\langle 2j-2\rangle\Delta \langle n/2-s+2j\rangle \Delta B~)~), $\\
$f^j_{n/2+s_{A\Delta~\theta(~B~)}}$&$=$&$ (~A\Delta\langle 2j-2\rangle\Delta\overline{ \langle s-1\rangle }\Delta~\theta(~\langle 2j-2\rangle\Delta\overline{ \langle n/2-s+2j-1\rangle }B~),$\\&&$A\Delta\langle 2j-2\rangle\Delta\overline{ \langle s-1\rangle }\Delta~\theta(~\langle 2j-2\rangle\Delta\overline{ \langle n/2-s+2j\rangle }B~)~), $\\&&where $s=2j-1,2j,\ldots,n/2+2j-2.$
\end{tabular}\\
\emph{Claim:} $e^1_r,e^1_{n/2+r}, f^j_{s_{A\Delta~\theta(~B~)}}, f^j_{n/2+s_{A\Delta~\theta(~B~)}}$ are all vertex-disjoint.\\
As in the proof of Lemma 5.2, we call $e^1_r=(~X_1,X_2~), e^1_{n/2+r}=(~X_3,X_4~), f^j_{s_{A\Delta~\theta(~B~)}}=(~Y_1,Y_2~)$ and $f^j_{n/2+s_{A\Delta~\theta(~B~)}}=(~Y_3,Y_4~)$ for convenience. We prove that $X_p \neq Y_1$ for all $p \in \{1,2,3,4\}.$ The remaining cases $X_p \neq Y_q, p=1,2,3,4, q=2,3,4$ follow along lines similar to those in the proof of Lemma 5.2. 

(~1~). Suppose if possible $X_1 = Y_1.$ Then $\overline{\langle n/2-r+1\rangle }\Delta~\theta(~\langle r-1\rangle ~)=A\Delta\langle 2j-2\rangle $ $\langle s-1\rangle \Delta~\theta(~\langle 2j-2\rangle\Delta \langle n/2-s+2j-1\rangle \Delta B~)$ for some $A,B \in H,$ $r=1,2,\ldots,n/2, s=2j-1,\ldots,n/2+2j-2$ and $j \geq 2.$  Therefore $A = \langle 2j-2\rangle\Delta \langle s-1\rangle\Delta\overline{ \langle n/2-r+1\rangle }$ and $B=\langle 2j-2\rangle\Delta \langle r-1\rangle\Delta \langle n/2-s+2j-1\rangle .$ It is easy to observe that if $A=\emptyset,$ then $B \notin H,$ a contradiction. Similarly, if $B=\emptyset$ then $A \notin H,$ a contradiction. Therefore both $A$ and $B$ are non-empty. Observe that $\overline{\langle n/2-r+1\rangle }$ contains $n/2$ if $r \geq 2$ and is $\emptyset$ if $r=1.$ By notations given in the proof of Theorem 3.1, if $s=n/2+k\geq n/2+2$ then $\langle s-1\rangle =\langle n/2\rangle\Delta \langle k-1\rangle =\{k,k+1,\ldots,n/2\}.$ 

If $r=1,$ $B$ is a consecutive string, a contradiction by Lemma 2.3(2).

Therefore $r \geq 2.$

If $2j-2 \leq s \leq n/2,$ then $A$ contains $n/2,$ a contradiction.

If $s=n/2+1,$ then $A$ is consecutive, a contradiction.

Suppose $n/2+2 \leq s \leq n/2+2j-2.$ Then the symmetric difference of $A$ and $B$ is $A\Delta B =$ $\langle r-1\rangle\Delta \langle n/2-s+2j-1\rangle\Delta \langle s-1\rangle\Delta\overline{ \langle n/2-r+1\rangle }=\langle r-1\rangle\Delta \langle s-1\rangle\Delta \langle n/2-s'+1\rangle \Delta \overline{\langle n/2-r+1\rangle }=\{r, \ldots, s-1\} \Delta \{n/2-r+2,\ldots,n/2-s'+1\},$ where $s'=s-(~2j-2~).$ By the similar arguments used in the Case(~i~) of the proof of Lemma 5.1, we get a contradiction. 

Therefore $X_1 \neq Y_1.$

(~2~). Suppose $X_2=Y_1.$ Then $\overline{\langle n/2-r+2\rangle }\Delta~\theta(~\langle r-1\rangle ~)=A\Delta\langle 2j-2\rangle\Delta \langle s-1\rangle \Delta~\theta(~\langle 2j-2\rangle\Delta \langle n/2-s+2j-1\rangle \Delta B~),$ where $A,B \in H$ are non-empty, $r=1,2,\ldots,n/2, s=2j-1,\ldots,n/2+2j-2$ and $j \geq 2.$  Therefore $A = \langle 2j-2\rangle\Delta \langle s-1\rangle\Delta\overline{ \langle n/2-r+2\rangle }$ and $B=\langle 2j-2\rangle\Delta \langle r-1\rangle\Delta \langle n/2-s+2j-1\rangle .$ 

If $r=1,$ then $B$ is a consecutive string, a contradiction. 

If $r=2,$ then $A$ is either a consecutive string or contains $n/2,$ a contradiction.

Therefore $r \geq 3.$ 

Suppose $2j-1 \leq s \leq n/2+1$ then either $A$ is a consecutive string or contains $n/2,$ a contradiction. 

Suppose $n/2+2 \leq s \leq n/2+2j-2.$ Put $s-1 = n/2+k$ in $A.$ Therefore $A = \langle 2j-2\rangle $ $\langle s-1\rangle\Delta\overline{ \langle n/2-r+2\rangle }=\langle 2j-2\rangle\Delta \langle n/2+k\rangle\Delta\overline{ \langle n/2-r+2\rangle }=\langle 2j-2\rangle\Delta\overline{ \langle k\rangle }\Delta\overline{\langle n/2-r+2\rangle }=\langle 2j-2\rangle\Delta\{k+1,\ldots,n/2\}\Delta\{n/2-r+3,\ldots,n/2\}.$ If $k+1 <  n/2-r+3,$  then $A=\langle 2j-2\rangle\Delta\{k+1,\ldots,n/2-r+2\},$ and if $k+1> n/2-r+3,$  then $A=\langle 2j-2\rangle\Delta\{n/2-r+3,\ldots,k\}.$ In either of the cases, $A=A_1 A_2,$  where $|A_1|=2j-2$ and $|A_2|=n/2-k-r+2.$ Similarly $B=\langle 2j-2\rangle\Delta \langle r-1\rangle\Delta \langle n/2-s+2j-1\rangle =\langle 2j-2\rangle\Delta\{r,\ldots,n/2-s+2j-1\}=B_1 B_2,$ where $|B_1|=2j-2$ and $|B_2|=n/2-s-r+2j.$ Note that both $|A_2|$ and $|B_2|$ must be even. Therefore $r,k,s$ must be of the same parity. However, $k=s-1-n/2$ imply that $k$ and $s$ are of different parities as $n/2$ is even, a contradiction. 

Therefore $X_2 \neq Y_1.$

(~3~). Suppose $X_3=Y_1.$ Then $A=\langle 2j-2\rangle\Delta \langle s-1\rangle\Delta \langle n/2-r+1\rangle $ and $B=\langle 2j-2\rangle $ $\overline{\langle r-1\rangle}\Delta \langle n/2-s+2j-1\rangle .$ If $s=2j-1,$ then $A$ is a consecutive string, a contradiction. For any other values of $s,$ $B$ contains $n/2,$ a contradiction. Therefore $X_3 \neq Y_1.$

(~4~). Suppose $X_4=Y_1.$ Then $A=\langle 2j-2\rangle\Delta \langle s-1\rangle\Delta \langle n/2-r+2\rangle $ and $B=\langle 2j-2\rangle $ $\overline{\langle r-1\rangle}\Delta \langle n/2-s+2j-1\rangle .$ If $s=2j-1,$ then $A$ is a consecutive string. Otherwise $B$ contains $n/2,$ a contradiction. Therefore $X_4 \neq Y_1.$

Thus $\mathcal M$ is matching of $Q_n.$

Now $$|\mathcal M|=|M|+|M'|=|M_1|+|M_2|+\ldots+|M_{n/4}|+|M_1'|+|M_2'|+\ldots+|M_{n/4}'|$$
$$=n/4\times|M_1|+n/4\times |M_1'|= n/2 \times |M_1|.$$

But $|M_1|=n \times$ number of $2^mn$-cycles in $F_1.$ Recall from Lemma 2.3(2) that $|H|=2^{n/2-m}.$ Therefore number of $2^mn$-cycles in $Q_n$ is $2^{n/2-m}\times 2^{n/2-m}.$ So $$|M_1|=n \times 2^{n/2-m}\times 2^{n/2-m}=2^m \times 2^{n-2m}=2^{n-m}.$$

Therefore, $$|\mathcal M|=n/2 \times 2^{n-m}=2^{m-1}\times 2^{n-m}=2^{n-1}.$$

Hence, $\mathcal M $ forms a perfect matching of $Q_n.$
\end{proof}

\textbf {The proof of Theorem 3.1 gets completed from Lemma 5.5.}\\

\textbf {This completes the proof of Main Theorem 1.6.}\\ 

\centerline {\textbf{Concluding Remark}}

In this paper, we considered the problem of decomposing the hypercube $Q_n$ into $2^mn$-cycles.
We constructed such decompositions for  $n \geq 2^m.$ However, the problem remains open for $n < 2^m.$

\centerline {\textbf {Acknowledgments}}

 The first and third authors would like to thank  DST-SERB, Government of India for the financial support under the project SR/S4/MS:750/12.


 \newpage
\centerline{\textbf{\large{Appendix}}}
Here we give an illustration of Theorem 3.1 by constructing  $64$-cycles in $Q_8$ and selecting edges from the cycles to form a perfect matching of $Q_8.$
\vskip.2cm\noindent
\textbf{(~I~). Construction of $64$-cycles in the decomposition of $Q_8.$}\\ We know that $Q_8=Q_4\Box Q_4.$ Here $n=2^3,$ $n/2=2^2.$ Consider $Q_4.$ Its vertex set is $\mathcal P(\{1,2,3,4\}).$ As per notations of Section 2, we have $G=\{A\colon A\subset \{1,2,3\}, ~|A|~\textrm{even}\}=\{\emptyset,\{1,2\},\{1,3\},\{2,3\}\}.$ 
Here $K=\{\{1,3\}\}.$ The subgroup $H$ is generated by the symmetric difference of $2$-element subsets in $K.$ Thus $H=<\{1,3\}>=\{\emptyset,\{1,3\}\}$
The subgroup $H=\{\emptyset,\{1,3\}\}$ is the subgroup of $G$ and $H_1=H,$ $H_2=\{1,2\}\Delta H=\{\{1,2\},\{2,3\}\}$ are the cosets of $H$ in $G.$

The collection of edge-disjoint $8$-cycles which decompose $Q_4$ is given by $\{C(A,S)\colon A\in G\},$ where $S=(1,2,3,4,1,2,3,4)$ is the edge-direction sequence of the cycles. So $Q_4=\displaystyle\bigsqcup_{A\in G}C(A,S),$ $W_1=\displaystyle\biguplus_{A\in H_1}C(A,S)=C(\emptyset,S)\uplus C(\{1,3\},S)$ and $W_2=\displaystyle\biguplus_{A\in H_2}C(A,S)=C(\{1,2\},S)\uplus C(\{2,3\},S) .$ 
Now $Q_8 = Q_{4} \Box Q_{4} = (W_1 \sqcup W_2 ) \Box (W_1 \sqcup W_2)=(W_1 \Box W_1) \sqcup (W_2 \Box W_2)\textrm{(by Lemma 2.8(1))} .$ Hence
$ W_1 \Box W_1 =(~C(\emptyset,S)\Box C(\emptyset,S)~)\uplus(~C(\emptyset,S)\Box C(\{1,3\},S)~)\uplus(~C(\{1,3\},S)\Box C(\emptyset,S)~)\uplus(~C(\{1,3\},S)\Box C(\{1,3\},S)~)$ \\
$=(\Phi_{\emptyset\emptyset}\sqcup\Gamma_{\emptyset\emptyset})\uplus(\Phi_{\emptyset\{1,3\}}\sqcup\Gamma_{\emptyset\{1,3\}})\uplus(\Phi_{\{1,3\}\emptyset}\sqcup\Gamma_{\{1,3\}\emptyset})\uplus(\Phi_{\{1,3\}\{1,3\}}\sqcup\Gamma_{\{1,3\}\{1,3\}})$\\
$=(\Phi_{\emptyset\emptyset}\uplus\Phi_{\emptyset\{1,3\}}\uplus\Phi_{\{1,3\}\emptyset}\uplus\Phi_{\{1,3\}\{1,3\}})\sqcup(\Gamma_{\emptyset\emptyset}\uplus\Gamma_{\emptyset\{1,3\}}\uplus\Gamma_{\{1,3\}\emptyset}\uplus\Gamma_{\{1,3\}\{1,3\}})=F_1\sqcup F_1'.$\vskip.2cm\noindent
Similarly,
 $W_2\Box W_2=(\Phi_{\{1,2\}\{1,2\}}\uplus\Phi_{\{1,2\}\{2,3\}}\uplus\Phi_{\{2,3\}\{1,2\}}\uplus\Phi_{\{2,3\}\{2,3\}}) \sqcup$\\ $(\Gamma_{\{1,2\}\{1,2\}}\uplus\Gamma_{\{1,2\}\{2,3\}}\uplus\Gamma_{\{2,3\}\{1,2\}}\uplus \Gamma_{\{2,3\}\{2,3\}})=F_2\sqcup F_2'.$ \vskip.2cm\noindent
 Thus, $Q_8=F_1\sqcup F_2\sqcup F_1'\sqcup F_2'\\=\{\Phi_{AB}\colon A,~B ~\in~ H_1\}~\sqcup~\{\Phi_{AB}\colon A,~B ~\in~ H_2\}~\sqcup~\{\Gamma_{AB}\colon A,~B ~\in~ H_1\}~\sqcup~\{\Gamma_{AB}\colon A,~B ~\in~ H_2\}$
 is a decomposition of $Q_8$ into $64$-cycles. These cycles are constructed explicitly in the figures at the end. The initial vertex and the edge direction sequence of each of these cycles is shown in the figures. 
 
  Note that the above cycles $\Phi_{AB}$ and $\Gamma_{AB}$ are each of length $64$ and the subscripts $AB$ stand for their initial vertices $A\Delta\theta(B).$ Further, all the cycles in the $2$-regular, spanning subgraphs $F_1$ and $F_2$ of $Q_8$ have edge-direction sequence $\mathcal S$ while all the cycles in $F_1'$ and $F_2'$ have edge-direction sequence  $\theta(\mathcal S),$ where $\mathcal S$ and $\theta(\mathcal S)$ are as follows.\\ \\
\small


\end{figure}


\begin{thebibliography}{99}
\bibitem {Als} Alspach, B., Bermond, J.C., Sotteau, D.; Decomposition into Cycles I: Hamilton Decompositions. Proceedings of the NATO Advanced Research Workshop on Cycles and Rays. Kluwer Academic 9-18, (1990).
\bibitem {Elz} El-Zanati, S., Eynden, C.V.; Cycle factorizations of cycle products. Discrete Math. 189, 267-275, (1998).
\bibitem {Erd} Erde, J.; Decomposing the cube into paths. Discrete Math. 336, 41 - 45, (2014).
\bibitem {Hor} Horak, P., Siran, J., Wallis, W.; Decomposing Cubes. J. Austral. Math. Soc. (Series A) 61, 119-128, (1996).
\bibitem {Kot} Kotzig, A.; Every Cartesian product of two circuits is decomposable into two hamiltonian circuits. Rapport 233, Centre de Recherches Mathematiques. Montreal, (1973).
\bibitem {Mol} Mollard, M., Ramras, M.; Edge decompositions of hypercubes by paths and by cycles. Graphs Combin. 31(3), 729 - 741, (2015). 
\bibitem {Mar} Ramras, M.; Symmetric edge-decomposition of hypercubes, Graphs Combin. 7, 65-87, (1991).
\bibitem {Ram} Ramras, M.; Fundamental subsets of edges of hypercubes, Ars Combin. 46, 3-24, (1997).
\bibitem {Son} Song, S. W.; Towards a simple construction method for Hamiltonian decomposition of the hypercube. Disc. Math. and Theoretical Comp. Sci. 21, DIMACS Series, Amer. Math. Soc., 297-306, (1995).
\end{thebibliography}
\end{document}